\documentclass[12pt,a4paper]{article}
\usepackage[latin1]{inputenc}
\usepackage{amsmath}
\usepackage{amsfonts}
\usepackage{amssymb}
\usepackage{amsthm}
\usepackage[english]{babel}
\usepackage{amsmath}
\usepackage{amsfonts}
\usepackage{amssymb}
\usepackage{amstext}
\usepackage{makeidx}
\usepackage{graphicx}

\footskip15mm

\theoremstyle{plain}
\usepackage{xcolor}
\usepackage{geometry}
\newtheorem{theorem}{Theorem}[section]
\newtheorem{lemma}[theorem]{Lemma}
\newtheorem{proposition}[theorem]{Proposition}

\newtheorem{remark}[theorem]{Remark}

\newtheorem{corollary}[theorem]{Corollary}
\newtheorem{question}[theorem]{Question}
\newtheorem*{theorem*}{Theorem}
\setlength{\parindent}{0pt}

\font\eusmtwelve=eusm10 at 12pt
\font\eusmeight=eusm7 at 8pt
\font\eusmsix=eusm5 at 6pt

\newcommand{\mathscript}[1]
{\mathchoice
{\text{\eusmtwelve #1}}
{\text{\eusmtwelve #1}}
{\text{\eusmeight #1}}
{\text{\eusmsix #1}}}

\newcommand{\A}{\mathscript A}
\newcommand{\B}{\mathscript B}
\newcommand{\C}{\mathscript C}
\newcommand{\D}{\mathscript D}
\newcommand{\N}{\mathscript N}
\newcommand{\Q}{\mathscript Q}
\renewcommand{\S}{\mathscript S}
\newcommand{\T}{\mathscript T}

\newcommand{\Cl}[1]{\mathop{\mathrm{Cl}}#1}
\newcommand{\Fix}[1]{\mathop{\mathrm{Fix}}#1}

\newcommand{\id}{\mathrm{id}}
\newcommand{\st}{\;|\;}

\newcommand{\cyl}[1]{[\mkern0.5mu#1\mkern0.5mu]}

\newcommand{\per}[1]{#1^\infty}
\newcommand{\pper}[1]{(#1)^\infty}

\newcommand{\restr}[1]{|_{#1}}
\newcommand{\trunc}[1]{\smash{\text{{\raise 0.05ex \hbox{$\shortmid$}}%
                              \llap{\raise -0.55ex \hbox{$\shortmid$}}}}_{#1}}
                               
\newcommand{\llangle}{\langle\mkern-4.5mu\langle}
\newcommand{\rrangle}{\rangle\mkern-4.5mu\rangle}

\newcommand{\binexp}[2]{{\langle{#1}\rangle}^{\mkern-3mu{#2}}}
\newcommand{\fixexp}[1]{{\langle{#1}\rangle}^{\mkern-3mu\scriptscriptstyle\text{F}}}

\newcommand{\fibexp}[1]{\llangle{#1}\rrangle}

\newcommand\hiddentext[1]{}

\geometry{a4paper,total={17cm,25.5cm}}

\begin{document}

\parindent0pt
\frenchspacing

\let\setminus\smallsetminus


\thispagestyle{empty}

\vglue9mm

\centerline{\LARGE\bf The simplest erasing substitution}

\vskip 6mm

\centerline{\large\it\hfil{Alessandro Della Corte}\hfil{Stefano Isola}\hfil{Riccardo Piergallini}\hfil}

\vskip3mm

\centerline{\large{Scuola di Scienze e Tecnologie -- Universit\`a di Camerino -- Italy}}

\vskip6mm

\begin{abstract}

\noindent
In this work, we begin the study of a new class of dynamical systems determined by interval maps generated by the symbolic action of erasing substitution rules. We do this by discussing in some detail the geometric, analytical, dynamical and arithmetic properties of a particular example, which has the virtue of being arguably the simplest and that at the same time produces interesting properties and new challenging problems.

\medskip\smallskip\noindent
{\sl Keywords}\/: topological dynamics, Baire class 1 function, erasing substitution, distributional chaos.

\medskip\noindent
{\sl AMS Classification}\/: 37B10, 37B05, 37B40, 26A18, 26A21, 54C50, 54H20.
\end{abstract}


\section*{Introduction}

Substitutive dynamical systems are a widely studied and quite well-understood class (see for instance \cite{Fogg,Queffe}). In this context, it is usual to intend the term \emph{substitution} in a rather specific sense. Namely, a substitution $S$ is typically understood as a rule replacing every element from a given alphabet by a finite word on the same alphabet, in order to extend $S$ by concatenation to a morphism over all the finite or infinite words. The morphic nature of $S$ means that the action of the substitution on a certain symbol within a word $w$ is independent of its position in $w$. Moreover, most of the times no symbol is mapped by $S$ to the empty word, that is the substitution is assumed to be \emph{non-erasing}.

\medskip

In recent years, erasing substitutions have been taken in some consideration within combinatorics of words (see for instance \cite{Durand,Schneider}), mostly in the context of computer science, while little attention seems to be devoted to the analytical and dynamical properties of real maps generated by the symbolic action of erasing substitutions. The shift from symbolic spaces to the continuous real context, that is from zero-dimensional spaces to one-dimensional ones, has relevant consequences on the richness of the properties involved and the problems that seems natural to consider. Lately, topological dynamics questions initially considered only in case of continuous maps, such as the structure of $\omega$-limit sets, topological entropy, the presence of Devaney, Li-Yorke or distributional chaos and mixing properties, have been addressed in a more general context. For instance: Darboux interval maps of Baire class 1 are considered in \cite{KK,Szuca}, general Darboux interval maps are investigated from the point of view of the connections between transitivity, turbulence and topological entropy in \cite{Pawlak,Pawlak2}, interval maps with $G_\delta$ connected graph are studied in \cite{Ciklova1,Ciklova2}, while in \cite{Kahng} an adjustment of the concept of Devaney chaos is proposed for discontinuous maps. In this connection, the study of the dynamics of real maps generated by erasing block substitutions appears as a natural development, as the objects constructed in this way fall into a category which is in a way the direct generalization of the ones cited above, that is Baire class 1, generally not Darboux functions. The systematic investigation of the dynamical properties of these maps has just begun; for instance,  generic interval maps of Baire class 1 are studied in \cite{Steele0,Steele1,Hanson}.

\medskip

Our aim is the investigation of an interval map generated by what is arguably the simplest erasing substitution rule, which we will indicate by $\rho$. We are led to the choice of $\rho$ as follows. We start selecting the smallest non-trivial alphabet, that is $\{0,1\}$. The simplest way to get an erasing substitution is to map one of the symbols, say 0, to the empty word. Then, we must make a choice on how to transform the other symbol 1. Clearly, to obtain a non-trivial map, we cannot replace every 1 with a fixed word, independently of its position. Making once again the simplest choice, we distinguish between odd and even positions, replacing one case with the symbol 0 and the other with the symbol 1. 

\medskip

Even in this model-case, the real map $R:[0,1] \to [0,1]$ generated by the symbolic action of $\rho$ on binary expansions, presents interesting properties and challenging problems, which involve, among others, measure-theoretic, dimension-theoretic, topological, dynamical and arithmetical aspects. As we will see better in Section \ref{map}, from an operational point of view the map is hardly manageable, as to obtain the first $m$ digits of the $n$-th iterate of its action at point $x$ we must typically provide x with accuracy of order $\approx 2^{-2^{mn}}$. This entails that it is almost impossible to investigate in a purely numerical fashion objects as $\omega$-limit sets and attractors. Moreover, as we will see, there is an uncountable and dense subset of $[0,1]$ exhibiting extreme distributional chaos, which means that the qualitative relative behaviour of sets of points, even arbitrarily close, cannot be extrapolated from finite samplings of the dynamics of any time length.

\medskip

It is this combination of simplicity of the object and variety of related properties which in our opinion confers interest to the proposed subject. 

\medskip

The main properties of the map $R$, defined in Sections \ref{map} and \ref{function}, can be summarized as follows.

\vskip-7.5pt\vskip0pt\leftmargini25pt
\begin{enumerate}\topsep0pt\itemsep0pt

\item The graph of $R$ is a totally disconnected subset of the plane, whose Hausdorff dimension is between 1 and $\log_2{3}$ (Section \ref{graph}).

\item $R$ is a singular Borel map of Baire class 1, it is not Darboux, and its mean value is the (admittedly a bit surprising) rational number $3/7$ (Section \ref{analysis}).

\item The closure of the fibers and of the fixed point set of $R$ are null-measure Cantor sets, with Hausdorff dimension belonging to $[1/2, \log_2{\varphi}]$ (Sections \ref{analysis} and \ref{dynamics}).

\item There are two rational $R$-cycles of order two which attract (in a finite time) every rational, moreover $R$ has uncountably many periodic points for any given period, it exhibits Devaney and (uniform) DC1 chaos, it is topologically mixing, it has infinite topological entropy and every point in $[0,1]$ is a full entropy point (Section \ref{dynamics}).

\item The combinatorially simplest periodic points of odd period (including 1) are transcendental numbers and there are uncountably many periodic points that are transcendental as well for every given period (Section \ref{arithmetic}).

\end{enumerate}
\vskip-6pt

For the reader's convenience, a list of the most used symbols is given at the end of the paper.


\section{The general setting}
\label{setting}

Let $\A = \mathbb N^\omega$ be the set of all infinite sequences of non-negative integers, and $\B = \{0,1\}^\omega \subset \mathbb N^\omega$ be the subset of all infinite binary sequences, both endowed with the metric given by
\begin{equation*}
d(s,t) = 1/2^{\min\{k\geq 1 \st s_k\neq\,t_k\}},
\end{equation*}
for every $s= (s_1,s_2,\dots,s_n,\dots)$ and $t= (t_1, t_2, \dots, t_n,\dots)$ such that $s \neq t$.

\medskip

Then, $\B$ splits as the union of two dense subspaces
\begin{equation*}
\B = \C \cup \C',
\end{equation*} 
where $\C$ consists of all the infinite binary sequences which are not eventually 0, while $\C'$ consists of all the infinite binary sequences which are not eventually 1.

\medskip

There is a 1-Lipschitz homeomorphism $\eta: \A \to \C$ defined by
\begin{equation}\label{etadef}
\begin{array}{rcl}
a = (a_1,a_2,\dots,a_n,\dots\,) \ \mathrel{\buildrel \textstyle \eta \over \longmapsto}\ \eta(a) 
\!\!\!&=&\!\!\! (0^{a_1},1,0^{a_2},1,\dots,0^{a_n}1,\dots\,)\\[4pt]
&=&\!\!\! (\underbrace{0,\dots,0}_{a_1},1,\underbrace{0,\dots,0}_{a_2},1,\dots, \underbrace{0,\dots,0}_{a_n},1,\dots\,) \in \C
\end{array}
\end{equation}
for every $a = (a_1,a_2,\dots,a_n,\dots\,) \in \A$.

\medskip

For the sake of convenience, we introduce the notation
\begin{equation}\label{binexp}
\binexp{a}{b} = \eta(2a + \eta^{-1}(b))
\end{equation}
with $a \in \A$ and $b \in \C$. If $a = (a_1,a_2,\dots,a_n,\dots\,)$ then $\binexp{a}{b}$ is obtained from $b$ by inserting $0^{2a_k}$ immediately before the $k$-th occurrence of 1 in $b$. In particular, $\binexp{a}{\per{1}} = \binexp{a}{(1,1,\dots)} = \eta(2a)$.

\medskip

There is also a 2-Lipschitz map $\xi:\B \to [0,1]$ defined by
\begin{equation}\label{xidef}
b = (b_1,b_2,\dots,b_n,\dots\,) \ \mathrel{\buildrel \textstyle\xi \over\longmapsto}\ \xi(b) = \sum_{n = 1}^\infty \frac{b_n}{2^n} = 0.b_1b_2\dots b_n \ldots \in [0,1]
\end{equation} 
for every $b = (b_1,b_2,\dots,b_n,\dots\,) \in \B$.

\medskip

The map $\xi$ splits as the union of two bijective maps
\begin{equation*}
\xi\restr{\C}:\C \to (0,1] \ \text{ and } \ \xi\restr{\C'}:\C' \to [0,1).
\end{equation*}

Then, we have the inverse maps
\begin{equation}\label{zetadef}
\beta = (\xi\restr{\C})^{-1}: (0,1] \to \C \ \text{ and } \ 
\beta' = (\xi\restr{\C'})^{-1}:[0,1) \to \C',
\end{equation}
giving the binary expansions of the real numbers in $[0,1]$, meaning the infinite sequences of binary digits in their binary expressions, completed by infinitely many 0's in the case of the finite binary expressions of dyadic rationals.

\medskip

We emphasize the difference between a \emph{binary expression} $0.x_1x_2\dots$ of $x \in [0,1]$, which can be finite (for dyadic rationals) or infinite, and a \emph{binary expansion} of $x$ which is always an infinite sequence in $\B$. Namely, for a dyadic rational $x \in (0,1]$ admitting the finite binary expression $0.x_1x_2\dots x_n$, we have $\beta'(x) = (x_1,x_2,\dots,x_n,0,0,\dots) \in \C'$, while we have $\beta(x) = (x_1,x_2,\dots,x_{k-1},0,1,1,\dots) \in \C $ if $k \leq n$ is the index of the last occurrence of 1 in $0.x_1x_2\dots x_n$.

\medskip

Now, let $\mathbb D \subset \mathbb Q$ denote the set of all dyadic rationals, and put
\begin{equation*}
\D = \mathbb D \cap [0,1] \ \text{ and } \ \Q = \mathbb Q \cap [0,1]\,.
\end{equation*}
Notice that the two maps $\beta$ and $\beta'$ coincide and are continuous on $(0,1) \setminus\D$, while they differ on the set $\D \setminus \{0,1\}$, where $\beta$ is only left-continuous and $\beta'$ is only right-continuous.

Finally, we introduce the further notations $b\trunc{k}$ for the $k$-th truncation of an infinite binary sequence $b$, and $\cyl{w}$ for the cylinder consisting of all the infinite binary sequences having the finite prefix $w$. Namely,
\begin{eqnarray}
\label{trunc}
& b\trunc{k} = (b_1,b_2,\dots,b_k) \in \{0,1\}^k\ \text{ for every $b = (b_1,b_2,\dots,b_n,\dots) \in \B\,$,}\\[3pt]
\label{cyl}
& \cyl{w} = \{(w,b) \st b \in \B\} \subset \B \ \text{ for every $w = (w_1,w_2,\dots,w_k) \in \{0,1\}^k.$}
\end{eqnarray}
Then, for every $b \in \B$ and $k \geq 0$ we can express the open ball $B(b,1/2^k) \subset \B$ as $\cyl{b\trunc{k}}\,$.

\medskip

In the following, we identify the infinite binary sequence $b = (b_1,b_2, \dots)$ with the infinite binary word $b = b_1b_2\dots$, and we write $x = 0.b$ for (the corresponding binary expression of) $x = \xi(b)$. An analogous convention is adopted for finite binary sequences. Thus, the notations introduced in the equations \eqref{binexp}, \eqref{trunc} and \eqref{cyl} make sense also in the context of binary words, and hence in the context of binary expressions/expansions of real numbers in $[0,1]$.


\section{The substitution rule}
\label{map}

Let $\{0,1\}^\infty$ be the set of all the finite or infinite binary words, including the empty word $\epsilon$.\break For a word $w \in \{0,1\}^\infty$, we indicate by $|w|$ its (possibly infinite) length and by $|w|_1$ the (possibly infinite) number of 1's occurring in it.

\medskip

Consider the map $\rho:\{0,1\}^\infty \to \{0,1\}^\infty$ digit-wise defined by the alternating substitution rule
\begin{equation}\label{rho}
\rho:\left\{\vrule width0pt height22pt\right.\!\!\!
\begin{array}{ll}
0 \mapsto \epsilon\\
1 \mapsto 0 & \text{in odd positions}\\
1 \mapsto 1 & \text{in even positions}
\end{array}.
\end{equation}
In other words, for any word $b = b_1b_2 \ldots \in \{0,1\}^\infty$ the word $\rho(b)$ is obtained from $b$ by deleting all the $b_n = 0$ and replacing each $b_n = 1$ by $0$ if $n$ is odd and by $1$ if $n$ is even.

\medskip

We remark that the parity condition in the above definition makes $\rho$ a non-morphic map with respect to the concatenation. Indeed, for any $v,w \in \{0,1\}^\infty$ with $v$ a finite word and $|w|_1 > 0$, the equality
$\rho(vw) = \rho(v)\rho(w)$ holds only if $|v|$ is even, while for $|v|$ odd we have $\rho(vw) = \rho(v)\widetilde\rho(w)$, where $\widetilde\rho$ is the complementary substitution rule
\begin{equation}\label{tilderho}
\widetilde\rho:\left\{\vrule width0pt height22pt\right.\!\!\!
\begin{array}{ll}
0 \mapsto \epsilon\\
1 \mapsto 1 & \text{in odd positions}\\
1 \mapsto 0 & \text{in even positions}
\end{array}.
\end{equation}
For the sake of convenience, we introduce the notation
\begin{equation}\label{rhok}
\rho_k = 
\left\{\vrule width0pt height12pt\right.\!\!\!
\begin{array}{ll}
\rho & \text{if $k$ is an even integer}\\
\widetilde\rho & \text{if $k$ is an odd integer}
\end{array},
\end{equation}
and we write $\rho_v$ to mean $\rho_{|v|}$. In this way, for any $v$ and $w$ as above we have
\begin{equation}\label{rhov}
\rho(vw) = \rho(v)\rho_v(w).
\end{equation}

\medskip

We observe that, for binary words of even or infinite length, the action of $\rho$ coincides with that of the block substitution rule
\begin{equation}\label{tau}
\tau:\left\{\vrule width0pt height28pt\right.\!\!\!
\begin{array}{ll}
00 \mapsto \epsilon\\
01 \mapsto 1\\
10 \mapsto 0\\
11 \mapsto 01
\end{array}.
\end{equation}

\medskip

This interpretation of the substitution rule $\rho$ enables us to estimate the \emph{vanishing order}
\begin{equation}\label{vanord}
n_\epsilon(w) = \min\{k > 0 \st \rho^k(w)=\epsilon\}
\end{equation}
of a finite binary word $w$ under the action of $\rho$ in terms of $|w|$.

\begin{proposition}\label{erasing}
If $w \neq \epsilon$ is a finite binary word then $n_\epsilon(w) \leq 2\lfloor \log_2 |w|\rfloor + 2$. 
\end{proposition}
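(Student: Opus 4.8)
The plan is to ignore the fine combinatorial structure of $\rho$ and to track only the length $|\rho^k(w)|$ of the iterates. First I would record the elementary identity $|\rho(u)| = |u|_1$, valid for every finite word $u$: by \eqref{rho} the map $\rho$ erases each $0$ and replaces each $1$ by a single symbol, so the length of the image is just the number of $1$'s of the source. By itself this gives only $|\rho(u)| \le |u|$, which is far too weak --- on the word $11$, and on every power of it, the length is preserved --- so the argument must look two steps ahead.

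The key step is a two-step halving estimate. Again by \eqref{rho}, the $1$'s occurring in $\rho(u)$ are precisely the images of the $1$'s of $u$ lying in even positions, so $|\rho(u)|_1$ is at most the number of even positions of $u$, that is $\lfloor |u|/2\rfloor$. Combined with the identity above this yields
\begin{equation*}
|\rho^2(u)| \;=\; |\rho(u)|_1 \;\le\; \lfloor |u|/2\rfloor
\end{equation*}
for \emph{every} finite word $u$, whatever its length parity. I would emphasize that this estimate holds uniformly in the parity, which is exactly what makes the induction go through: $\rho$ is not a morphism, and the block rule $\tau$ of \eqref{tau} represents $\rho$ only on words of even or infinite length, so one cannot simply pass to $\tau$ throughout.

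Then the conclusion is a one-line induction. Writing $\ell_k = |\rho^k(w)|$, the estimate just obtained says $\ell_{k+2} \le \lfloor \ell_k/2\rfloor$; together with $\ell_0 = |w|$ and the identity $\lfloor\lfloor x/2\rfloor/2\rfloor = \lfloor x/4\rfloor$, induction on $j$ gives $\ell_{2j} \le \lfloor |w|/2^{\,j}\rfloor$ for all $j \ge 0$. Taking $j = \lfloor\log_2|w|\rfloor + 1$ forces $2^{\,j} > |w|$, hence $\ell_{2j} = 0$, i.e. $\rho^{2j}(w) = \epsilon$; since $\rho(\epsilon) = \epsilon$ the word remains empty from then on, so $n_\epsilon(w)$ is well defined and $n_\epsilon(w) \le 2j = 2\lfloor\log_2|w|\rfloor + 2$.

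The main obstacle here is conceptual rather than technical: one must resist looking for a one-step contraction of the length (which does not exist) and instead isolate the two-step halving; after that everything is bookkeeping. As a sanity check I would look at the extreme cases --- $w = 0$ gives $n_\epsilon(w) = 1$, $w = 1$ gives $n_\epsilon(w) = 2$, and the all-ones word of length $2^j$ gives $n_\epsilon(w) = 2j + 2$ --- the last two showing that the stated bound is attained.
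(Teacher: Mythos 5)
Your proof is correct and rests on the same key estimate as the paper's, namely the two-step halving $|\rho^2(u)| \le \lfloor |u|/2\rfloor$ followed by iteration until the length drops below $1$. The only (cosmetic) difference is that you derive the halving directly from the digit-wise rule via $|\rho(u)| = |u|_1$ and the fact that the $1$'s of $\rho(u)$ come from even positions of $u$, whereas the paper first reduces to even length (using $n_\epsilon(w)=n_\epsilon(w0)$) and reads the same bound off the block rule $\tau$; your version avoids that parity reduction but is otherwise the same argument.
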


\begin{proof}
We proceed by induction on $|w| \geq 1$, based on the trivial case of $|w| = 1$. So, let $|w| > 1$ and
observe that we can assume $|w|$ even, because of the obvious equality $n_\epsilon(w) = n_\epsilon(w0)$. In this case, we have $|\rho^2(w)| = |\rho(\tau(w))| = |\tau(w)|_1$. Looking at the contribution to $|\tau(w)|_1$ of each single pair, according to \eqref{tau}, we immediately get $|\rho^2(w)| \leq |w|/2$. Then, by the inductive hypothesis,
\begin{equation*}
n_\epsilon(w) \leq n_\epsilon(\rho^2(w)) + 2 \leq 2\lfloor \log_2 |\rho^2(w)|\rfloor + 4 \leq 
2\lfloor \log_2 (|w|/2)\rfloor + 4 = 2\lfloor \log_2 |w|\rfloor + 2.
\end{equation*}
\vskip-\lastskip
\vskip-\baselineskip
\end{proof}
\vskip12pt

As it can be easily realized, the vanishing order $n_\epsilon(w)$ of a word of length $n \geq 1$ can assume any value between 1 and the upper bound established by the proposition. In particular, the extremal values are attained for the words $0^n$ and $1^n$, which vanish exactly after 1 and $2\lfloor\log_2n\rfloor + 2$ iterations of $\rho$, respectively.

\medskip

Up to the identification between binary sequences and binary words, we have $\C \subset \B \subset \{0,1\}^\infty$. Moreover, it can be easily seen that $\rho^{-1}(\B) = \C$. Then, it makes sense to consider the restriction $\rho\restr{\C} = \tau\restr{\C}: \C \to \B$.
\medskip

Now, consider the substitution rule
\begin{equation}\label{tauinv}
\overline\tau:\left\{\vrule width0pt height12pt\right.\!\!\!
\begin{array}{ll}
0 \mapsto 10\\
1 \mapsto 01
\end{array},
\end{equation}
and let $\overline\tau:\{0,1\}^\infty \to \{0,1\}^\infty$ be the corresponding digit-wise generated map. Then, the image $\overline\tau(w)$ has even or infinite length for every $w \in \{0,1\}^\infty$. Moreover, $\tau \circ \overline\tau = \id_{\{0,1\}^\infty}$ and $\overline\tau(w) \in \C$ for every $w \in \B$. It immediately follows that the map $\rho\restr{\C}$ is surjective.

\medskip

Actually, $\overline\tau(w)$ is not the ``simplest'' element in $(\rho\restr{\C})^{-1}(w)$ for $w \in \B$. In fact, $\overline\tau(w)$ can contain pairs of consecutive 0's and these can be deleted without changing the image under $\rho\restr{\C}$. The reason is that $\rho(0) = \epsilon$ and that deleting/inserting subwords of even length does not change the parity of the following positions in the word.

\medskip

In the following we indicate by $\S \subset \C$ the set of the ``simplest'' infinite binary words, meaning those which do not contain any pair of consecutive 0's.

\medskip

In order to directly construct the ``simplest'' element in $(\rho\restr{\C})^{-1}(w)$, the unique one which belongs to $\S$, we consider the section $\sigma:\{0,1\}^\infty \to \{0,1\}^\infty$ digit-wise generated by the rule
\begin{equation}\label{sigma}
\sigma:\left\{\vrule width0pt height12pt\right.\!\!\!
\begin{array}{ll}
x \mapsto 1 & \text{if $x$ is preceded by $1-x$ in the word or it is 0 as the first digit}\\
x \mapsto 01 & \text{if $x$ is preceded by $x$ in the word or it is 1 as the first digit}
\end{array}.
\end{equation}
Given $w \in \{0,1\}^{\infty}$, $\sigma(w)$ is actually given by a true substitution rule (in the usual sense) applied to the sequence of first differences of the word $1w$ obtained prepending 1 to $w$. Moreover, $\sigma(b) \in \S \subset \C$ for every $b \in \B$, and hence $\sigma\restr{\B}: \B \to \C$ is a section of $\rho\restr{\C}$.

\begin{proposition}\label{fiber}
For every $b \in \B$ the fiber $(\rho\restr{\C})^{-1}(b)$ contains uncountably many elements.\break In fact,
\begin{equation}\label{fibereq}
(\rho\restr{\C})^{-1}(b) = \{\binexp{a}{\sigma(b)} \st a \in \A\},
\end{equation}
and hence $\sigma(b)$ is the unique element of $(\rho\restr{\C})^{-1}(b)$ belonging to $\S$ and it is the maximum of $(\rho\restr{\C})^{-1}(b)$ with respect to the lexicographic order.
\end{proposition}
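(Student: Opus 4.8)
The goal is to characterize the fiber $(\rho\restr{\C})^{-1}(b)$ for $b \in \B$. The plan is to proceed in two directions: first show that every element listed on the right-hand side of \eqref{fibereq} actually lies in the fiber, and then show conversely that every preimage of $b$ has the stated form. The key structural fact I would exploit is the identity \eqref{rhov}, $\rho(vw) = \rho(v)\rho_v(w)$, together with the observation that $\rho(0^{2m}) = \epsilon$ and that deleting or inserting a block of $2m$ zeros does not change the parity of the positions of the digits that follow it. This is precisely what makes the operation $\binexp{a}{\cdot}$ (inserting $0^{2a_k}$ before the $k$-th occurrence of $1$) compatible with $\rho\restr{\C}$.

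\textbf{Inclusion ``$\supseteq$''.} First I would verify that $\sigma(b) \in (\rho\restr{\C})^{-1}(b)$, i.e. $\rho(\sigma(b)) = b$. Since $\sigma(b)$ is built, digit by digit, by the rule \eqref{sigma} — reading $b$ left to right and emitting $1$ or $01$ according to whether the current digit agrees with the previous one (with the convention of prepending $1$) — I would check that applying $\rho$ to the concatenation of these blocks reconstructs $b$. The clean way is induction on the number of emitted blocks, using \eqref{rhov} to split off one block at a time: a block $1$ contributes one digit to $\rho$'s output and a block $01$ contributes one digit, and a short case analysis on parity of the current position (tracking which of $\rho,\widetilde\rho$ is active, via \eqref{rhok}) shows the emitted digit is exactly the corresponding digit of $b$. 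Next, for arbitrary $a \in \A$, I would argue $\rho(\binexp{a}{\sigma(b)}) = \rho(\sigma(b)) = b$: inserting $0^{2a_k}$ before the $k$-th $1$ only adds deletable zeros in even-length blocks, so by \eqref{rhov} and $\rho(0^{2a_k}) = \epsilon$ the image is unchanged. Finally $\binexp{a}{\sigma(b)} \in \C$ because $\sigma(b) \in \S \subset \C$ and the insertions keep infinitely many $1$'s.

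\textbf{Inclusion ``$\subseteq$''.} Conversely, suppose $c \in \C$ with $\rho(c) = b$. Write $c$ in terms of the positions of its $1$'s: since $c \in \C$ it has infinitely many $1$'s, say at positions with gaps recorded by some sequence in $\A$ via $\eta$. I would show that the parity of the position of the $k$-th $1$ in $c$ determines whether that $1$ survives (as a $0$ or a $1$) under $\rho$, and that $\rho$ deletes all the $0$'s; hence the surviving digits, read in order, spell $b$. The combinatorial heart is to show that, given the target $b$, the ``pattern'' of which $1$'s sit in even versus odd positions is forced, so that the lengths of the maximal zero-blocks between consecutive surviving $1$'s are determined modulo $2$, while the even part of each such length is a free parameter — exactly an element $a \in \A$. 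Concretely, I would prove that $c$ must be obtainable from $\sigma(b)$ by inserting even-length zero-blocks, i.e. $c = \binexp{a}{\sigma(b)}$ for a unique $a$. One clean route: run $\sigma$ and $c$ in parallel from the left, peeling off one $1$ of $b$ at a time via \eqref{rhov}; between two consecutive surviving $1$'s, the zeros in $c$ number the same as in $\sigma(b)$ plus an even number (to preserve the parity that made the next $1$ survive in the right way), which defines $a_k$.

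\textbf{Consequences.} Once \eqref{fibereq} is established, the remaining assertions are immediate. The fiber is uncountable because $\A = \mathbb N^\omega$ is uncountable and $a \mapsto \binexp{a}{\sigma(b)}$ is injective (different $a$ give different zero-block lengths). The element $\sigma(b)$ is the unique one in $\S$ since $\S$ forbids consecutive $0$'s, forcing every $a_k = 0$, i.e. $a = \per{0}$ and $\binexp{\per 0}{\sigma(b)} = \sigma(b)$. And $\sigma(b)$ is the lexicographic maximum of the fiber because inserting any $0$'s (before a $1$) can only decrease a sequence lexicographically relative to the one with no insertions — formally, $\binexp{a}{\sigma(b)} \leq \binexp{\per 0}{\sigma(b)} = \sigma(b)$ with equality iff $a = \per 0$, since at the first position where they differ $\binexp{a}{\sigma(b)}$ has a $0$ where $\sigma(b)$ has a $1$.

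\textbf{Main obstacle.} The routine parts are the ``$\supseteq$'' inclusion and the consequences; the genuine work is the ``$\subseteq$'' direction, specifically the bookkeeping that shows the preimage is forced to be $\binexp{a}{\sigma(b)}$ and not merely ``$\sigma(b)$ with zeros inserted somewhere.'' The subtlety is that zeros can only be inserted in even-length blocks \emph{immediately before} a $1$ without disturbing parity downstream; inserting an odd block, or splitting the word differently, would flip parities and change the image. Making this rigidity precise — i.e. that the position of \emph{every} surviving $1$ and the parity class of \emph{every} zero-gap is pinned down by $b$ — is where I would spend the most care, and I expect the cleanest argument to be the left-to-right peeling induction using \eqref{rhov} and \eqref{rhok} rather than any global formula.
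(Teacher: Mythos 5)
Your proposal is correct and follows essentially the same route as the paper: the ``$\supseteq$'' direction is a digit-wise induction showing $\rho(\sigma(b))=b$ plus the observation that inserting even-length zero blocks is invisible to $\rho$, and the ``$\subseteq$'' direction pins down the parity of the position of each surviving $1$ (the paper does this via $v=\eta(c)$ and the formula $k_n=c_1+\cdots+c_n+n$, which is just a global rewording of your left-to-right peeling), forcing $c=2a+\eta^{-1}(\sigma(b))$. The consequences (uncountability, uniqueness in $\S$, lexicographic maximality) are drawn exactly as in the paper.
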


\begin{proof}
We only need to prove equation \eqref{fibereq}, since the rest of the statement immediately follows from it. A straightforward induction on $n$ gives $\rho\restr{\C}(\sigma(b))_n = b_n$ for every $n \geq 1$, which implies that $\rho(\sigma(b)) = b$. On the other hand, by deleting pairs of 0's as discussed above, we get $\rho(\binexp{a}{\sigma(b)}) = b$ for every $a \in \A$. Conversely, let any $v \in (\rho\restr{\C})^{-1}(b)$ be written as $v = \eta(c)$, thanks to \eqref{etadef}. The $n$-th occurrence of 1 in $v$ is in position $k_n = c_1 + \ldots + c_n + n$. Since the parity of $k_n$ is determined by $\rho(v)_n = b_n$, induction on $n$ shows that also the parity of $c_n$ is determined for every $n \geq 1$. So, if $\sigma(b) = \eta(d)$ then we have $d_n = 0,1$ and $c_n \equiv d_n \text{ mod } 2$ for every $n\geq 1$. This yields $c = 2a + d$ for a suitable $a \in \A$, and hence, according to \eqref{binexp}, $v = \eta(c) = \eta(2a + d) = \binexp{a}{\sigma(b)}$.
\end{proof}

We conclude this section by considering some continuity properties of the maps $\rho\restr{\C}: \C \to \B$ and $\sigma\restr{\B}: \B \to \C$ with respect to the metric induced by the inclusions $\C \subset \B \subset \A$.

\begin{proposition}\label{rhocont}
The map $\rho\restr{\C}: \C \to \B$ is continuous but not uniformly continuous. Moreover, $\lim_{v \to b} \rho(v)$ does not exists for any $b \in \B \setminus\C$. So, $\rho$ cannot be continuously extended to any subset of $\,\B$ larger than $\C$.
\end{proposition}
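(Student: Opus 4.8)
The plan is to establish the three assertions in turn, in each case exploiting the factorization identity \eqref{rhov}, which localizes the action of $\rho$ on an initial segment. For continuity I would fix $v \in \C$ and $\epsilon \in (0,1]$, and use that, since $v$ is not eventually $0$, the counts $|v\trunc{k}|_1$ grow without bound, so some $k$ satisfies $2^{-|v\trunc{k}|_1} \le \epsilon$. For any $v' \in \C$ with $d(v,v') < 2^{-k}$ we have $v\trunc{k} = v'\trunc{k}$, and writing $v = v\trunc{k}u$, $v' = v\trunc{k}u'$ with $u,u'$ the tails, which still contain infinitely many $1$'s, \eqref{rhov} gives $\rho(v) = \rho(v\trunc{k})\,\rho_{v\trunc{k}}(u)$ and $\rho(v') = \rho(v\trunc{k})\,\rho_{v\trunc{k}}(u')$; so $\rho(v)$ and $\rho(v')$ share the prefix $\rho(v\trunc{k})$ of length $|v\trunc{k}|_1$, whence $d(\rho(v),\rho(v')) \le 2^{-|v\trunc{k}|_1} \le \epsilon$.

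For the failure of uniform continuity I would exhibit, for each $m \ge 1$, the pair $v_m = 0^{2m}\per{1}$ and $v_m' = 0^{2m+1}\per{1}$ in $\C$, with $d(v_m,v_m') = 2^{-(2m+1)} \to 0$. Reading off \eqref{rho} directly — the leading block of $0$'s is erased, and the surviving $1$'s then sit in positions whose parities alternate, starting from an odd position for $v_m$ and from an even one for $v_m'$ — one gets $\rho(v_m) = \per{01}$ and $\rho(v_m') = \per{10}$, so $d(\rho(v_m),\rho(v_m')) = 1/2$ for all $m$, ruling out uniform continuity.

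For the non-existence of $\lim_{v\to b}\rho(v)$ at a point $b \in \B\setminus\C$, I would write $b = w\per{0}$ with $w = b\trunc{\ell}$, $\ell$ large enough that $b_n = 0$ for $n > \ell$, and take $v_m = w\,0^{2m}\per{1}$, $v_m' = w\,0^{2m+1}\per{1}$, both in $\C$ and both converging to $b$. By \eqref{rhov}, $\rho(v_m) = \rho(w)\,\rho_\ell(0^{2m}\per{1})$ and $\rho(v_m') = \rho(w)\,\rho_\ell(0^{2m+1}\per{1})$, with $\rho_\ell \in \{\rho,\widetilde\rho\}$ determined by the parity of $\ell$; evaluating the tails exactly as above yields, for every $m$,
\begin{equation*}
\{\rho(v_m),\,\rho(v_m')\} = \{\rho(w)\per{01},\ \rho(w)\per{10}\},
\end{equation*}
where which of the two sequences receives which value depends only on the parity of $\ell$. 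Since these two values first disagree at position $|w|_1 + 1$ they are distinct, so $\rho(v_m)$ and $\rho(v_m')$ are eventually constant with different values while $v_m, v_m' \to b$; hence the limit cannot exist. The last claim then follows: if $\C \subsetneq X \subseteq \B$, choose $b \in X\setminus\C$; because $\C$ is dense in $\B$, hence in $X$, a continuous extension of $\rho\restr{\C}$ to $X$ would have to send $b$ to the common limit of $\rho(v_m)$ and of $\rho(v_m')$, which does not exist — a contradiction.

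The main obstacle is not conceptual but a matter of bookkeeping: because $\rho$ is non-morphic one cannot reason blockwise and must track parities through the auxiliary rules $\widetilde\rho$, $\rho_v$ and the identity \eqref{rhov}; and every test sequence must be verified to lie in $\C$, which is precisely why the finite prefixes above are completed with $\per{1}$ rather than with $\per{0}$.
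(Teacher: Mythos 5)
Your proposal is correct and follows essentially the same route as the paper: the continuity argument via the shared prefix $\rho(v\trunc{k})$ of length $|v\trunc{k}|_1\to\infty$ and the non-existence of the limit at $b=w\per{0}$ via the test sequences $w0^{2m}\per{1}$, $w0^{2m+1}\per{1}$ are exactly the paper's. The only (harmless) divergence is that you refute uniform continuity directly with the explicit pairs $0^{2m}\per{1}$, $0^{2m+1}\per{1}$, whereas the paper argues indirectly that uniform continuity on the dense subset $\C$ would force a continuous extension to $\B$, contradicting the third assertion.
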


\begin{proof}
For any $b \in \C$ and $n \geq 1$, let $k_n = |\rho(b\trunc{n})| = |b\trunc{n}|_1$. Then, $\rho(B(b,1/2^n)) = \rho(\cyl{b\trunc{n}}) \subset \cyl{\rho(b\trunc{n})} = \cyl{\rho(b)\trunc{k_n})} = B(\rho(b),1/2^{k_n})$, and hence the continuity of $\rho\restr{\C}$ at $b$ immediately follows from the fact that $k_n \to \infty$ for $n \to \infty$. On the other hand, if $\rho$ were uniformly continuous then it would be possible to extend it to the whole space $\{0,1\}^\infty$, and this would contradict the second part of the statement, which we are going to prove.

Let $b \in \B \setminus \C$. Then, $b$ is eventually 0, and hence it can be written as $b = v \per{0}$. Consider the sequence $(b_n = v0^n\per{1})_{n \geq 1} \subset \C$. We have, $\lim_{n \to \infty} b_n = b$, while the limit $\lim_{n \to \infty} \rho(b_n)$ cannot exist, being $\rho(b_{2k+1}) = \rho(b_1) \neq \rho(b_2) = \rho(b_{2k+2})$ for every $k \geq 1$.
\end{proof}

\begin{proposition}\label{sigmacont}
The map $\sigma\restr{\B}: \B \to \C$ is uniformly continuous, in fact it is $1$-Lipschitz.
\end{proposition}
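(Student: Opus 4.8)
The plan is to verify directly that $\sigma\restr{\B}$ satisfies $d(\sigma(b),\sigma(b'))\le d(b,b')$ for all $b,b'\in\B$; since a $1$-Lipschitz map is uniformly continuous, this establishes both assertions at once. First I would fix distinct $b,b'\in\B$ and set $n=\min\{k\ge 1\st b_k\ne b'_k\}$, so that $d(b,b')=1/2^n$ while $b$ and $b'$ share the common prefix $b\trunc{n-1}=b'\trunc{n-1}$ (the empty word when $n=1$).

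The key observation, to be read off directly from \eqref{sigma}, is that the block --- either $1$ or $01$ --- contributed to $\sigma(b)$ by the digit $b_k$ is determined by the ordered pair $(b_{k-1},b_k)$ alone, under the convention $b_0=1$; this is just the reformulation, already recorded after \eqref{sigma}, of $\sigma$ as a genuine substitution applied to the first-difference sequence of $1b$. Hence for each $k$ with $1\le k\le n-1$ one has $(b_{k-1},b_k)=(b'_{k-1},b'_k)$, so the $k$-th blocks of $\sigma(b)$ and of $\sigma(b')$ coincide; concatenating these $n-1$ equal blocks shows that $\sigma(b)$ and $\sigma(b')$ share the prefix $\sigma(b\trunc{n-1})$. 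Since each block has length at least $1$, this prefix has length at least $n-1$, whence $\min\{k\ge 1\st\sigma(b)_k\ne\sigma(b')_k\}\ge n$ and therefore $d(\sigma(b),\sigma(b'))\le 1/2^n=d(b,b')$.

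I do not expect a genuine obstacle here. The one point deserving care is that $\sigma$ is not a morphism, so a priori the $k$-th block might depend on earlier digits and spoil the common-prefix argument; what rescues the estimate is precisely that it depends only on the immediately preceding digit $b_{k-1}$, which still lies inside the shared prefix as long as $k\le n-1$. One should also not attempt to improve the constant: since $b_{n-1}=b'_{n-1}$ but $b_n\ne b'_n$, the $n$-th blocks $\sigma(b_n)$ and $\sigma(b'_n)$ differ already in their first letter, so only $n-1$ common positions are guaranteed; this is sharp --- e.g.\ for $b=\pper{01}$ against a sequence agreeing with $b$ up to an even index and then differing --- so the Lipschitz constant $1$ is optimal.
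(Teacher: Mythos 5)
Your proof is correct and follows essentially the same route as the paper's: a common prefix of $b$ and $b'$ forces a common prefix of $\sigma(b)$ and $\sigma(b')$ of at least the same length, because each digit contributes a block of length at least $1$ determined only by that digit and its immediate predecessor. You are in fact slightly more careful than the paper, which compresses this into the implication $b\trunc{k}=b'\trunc{k}\Rightarrow\sigma(b)\trunc{k}=\sigma(b')\trunc{k}$ without spelling out why the non-morphic character of $\sigma$ is harmless; your explicit handling of the dependence on $(b_{k-1},b_k)$ and of the off-by-one bookkeeping is exactly the right justification.
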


\begin{proof}
Since $|\sigma(v)| \geq k$ for every $v \in \{0,1\}^k$, we have that
$d(b,b') = 1/2^k \Rightarrow b\trunc{k} = b'\trunc{k} \Rightarrow \sigma(b)\trunc{k} =
\sigma(b')\trunc{k} \Rightarrow d(\sigma(b),\sigma(b')) \leq 1/2^k$, for every $b,b' \in \B$.
\end{proof}


\section{The real map}
\label{function}

We define the map $R:[0,1] \to [0,1]$ as follows
\begin{equation}\label{R}
R(x) = \left\{\vrule width0pt height14pt\right.\!\!\!
\begin{array}{ll}
2/3 & \text{if $x = 0$}\\
\xi(\rho(\beta(x))) & \text{if $x \in (0,1]$}
\end{array}.
\end{equation}
For $x \neq 0$ this amounts to say that $R(x)$ admits a binary expansion which is the transform under $\rho$ of the unique binary expansion of $x$ in $\C$. In formulas, if $x = 0.b$ with $b \in \C$ then
\begin{equation*}
R(x)=R(0.b)=0.\rho(b)\,.
\end{equation*}
The image of 0 could be chosen somewhat arbitrarily. This particular choice is convenient for later purposes.

\medskip

The continuity properties we have seen in the previous section for the maps involved in the definition of $R$ give us the next proposition.

\begin{proposition}\label{Rcont}
The map $R$ is continuous everywhere but on the set $\D \setminus \{1\}$, where it is only left-continuous. Hence, $R$ is a Borel function of Baire class $1$, i.e.\ it is a point-wise limit of continuous functions.
\end{proposition}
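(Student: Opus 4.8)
The plan is to transfer the continuity behaviour of $\rho\restr{\C}$, $\beta$ and $\xi$ established in Section \ref{map} to the composition defining $R$ on $(0,1]$, and then deal separately with the image of $0$ and the finitely many points where $\beta$ fails to be continuous. First I would recall that $\xi$ is $2$-Lipschitz (hence continuous on all of $\B$), that $\rho\restr{\C}:\C\to\B$ is continuous (Proposition \ref{rhocont}), and that $\beta:(0,1]\to\C$ is continuous at every point of $(0,1)\setminus\D$ while it is only left-continuous at points of $\D\setminus\{0,1\}$ and (trivially) at $x=1$, by the discussion following \eqref{zetadef}. So on $(0,1]\setminus(\D\setminus\{1\})$ the map $R=\xi\circ\rho\restr{\C}\circ\beta$ is a composition of maps continuous at the relevant points, hence continuous there; and at a point $x\in\D\setminus\{1\}$, left-continuity of $\beta$ together with continuity of $\rho\restr{\C}$ and $\xi$ gives left-continuity of $R$ at $x$.

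Next I would check that $R$ is genuinely \emph{not} right-continuous at each $x\in\D\setminus\{1\}$, and that it behaves correctly at $x=0$. For a dyadic $x=0.w1\in(0,1)$ with $w$ a finite word, $\beta(x)=w1\per{0}\in\C$ but $\beta(x^+)=\lim_{y\downarrow x}\beta(y)=w0\per{1}$, i.e.\ the two one-sided binary expansions differ, and one computes $\rho(w1\per{0})=\rho(w)\rho_w(1\per{0})$ versus $\rho(w0\per{1})=\rho(w)\rho_w(0\per{1})$ using \eqref{rhov}; since $\rho_w(1\per{0})$ has a single $1$-bit (or none) while $\rho_w(\per{01})=\rho_w(0\per{1})$ is eventually $\per{01}$ or $\per{10}$, the corresponding real values differ, so $R(x^+)\neq R(x)$. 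For $x=0$ we have $\beta(y)\to\beta(0^+)=\per{1}$ as $y\downarrow 0$, whence $R(y)=\xi(\rho(\beta(y)))\to\xi(\rho(\per{1}))$; here $\rho(\per{1})=\per{01}$ and $\xi(\per{01})=1/3$, so $R(0^+)=1/3$, and since $R(0)$ was set to $2/3$, this confirms $R$ is discontinuous (and not right-continuous) at $0$ as well — consistent with the statement, since $0\in\D\setminus\{1\}$.

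Finally, for the Baire class $1$ claim, I would exhibit $R$ as a pointwise limit of continuous functions. The clean way is to approximate by the maps $R_n$ that read only the first $N(n)$ digits: since $\rho$ contracts length by at most a factor $1/2$ every two steps and a word of length $n$ vanishes in $O(\log n)$ steps (Proposition \ref{erasing}), one can define $R_n(x)=\xi(\rho(\beta'(x)\trunc{n}\,0^\infty))$ or a suitable piecewise-affine interpolation thereof, which is continuous (it depends on $x$ only through the first $n$ binary digits except at dyadics, where the two choices of truncation agree in the limit), and $R_n(x)\to R(x)$ for every $x$ because $\rho$ is continuous on $\C$ and the truncations converge. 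Alternatively, and most economically, I would simply invoke the standard fact that a function on an interval which is continuous except at countably many points, with one-sided limits existing everywhere (here $\D\setminus\{1\}$ is countable and $R$ is left-continuous with a right limit at each such point, and two-sided continuous elsewhere), is of Baire class $1$; being a Borel function is then immediate. I expect the only mildly delicate step to be the explicit verification that the right-hand limit of $R$ at each dyadic (and at $0$) exists and differs from the value — i.e.\ pinning down $\lim_{y\downarrow x}\rho(\beta(y))$ via \eqref{rhov} — while everything else is a routine chaining of the continuity lemmas already proved.
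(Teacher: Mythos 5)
Your overall strategy coincides with the paper's: chain the continuity of $\xi$, $\rho\restr{\C}$ and $\beta$ to obtain continuity off $\D \setminus \{1\}$ and left-continuity on it, then invoke the fact that a real function with countably many discontinuities is of Baire class $1$ (this is exactly the citation the paper uses). However, your verification that $R$ is \emph{only} left-continuous at dyadics contains genuine errors. First, you have $\beta$ and $\beta'$ interchanged: for $x = 0.w1 \in \D$ the expansion in $\C$ is $\beta(x) = w0\per{1}$ (it must not be eventually $0$), while $\lim_{y\,\searrow\,x}\beta(y) = w1\per{0} = \beta'(x) \in \B \setminus \C$. More seriously, you compute the right limit of $R$ by applying $\rho$ to the right limit of $\beta$; this interchange is illegitimate precisely because $\rho\restr{\C}$ admits no continuous extension to any point of $\B \setminus \C$ (second part of Proposition \ref{rhocont}). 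In fact $\lim_{y\,\searrow\,x}R(y)$ does not exist for $x \in \D \setminus \{0,1\}$, the oscillation interval being $I_x$ (Proposition \ref{ClG}); the paper warns, immediately after the statement, that the right-discontinuity is not simply inherited from $\beta'$. Your treatment of $x=0$ fails for the same reason and is also factually wrong: $\beta(y) \to \per{0}$ (not $\per{1}$) as $y \searrow 0$, and the right limit of $R$ at $0$ does not exist, so there is no value $R(0^+) = 1/3$. The correct way to get non-right-continuity is the sequence from the proof of Proposition \ref{rhocont}: along $y_n = \xi(\beta'(x)0^n\per{1}) \searrow x$ the values $R(y_n)$ oscillate between two distinct reals $0.\rho(\beta'(x))\per{01}$ and $0.\rho(\beta'(x))\per{10}$.

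Two further points on the Baire class $1$ step. The countable-discontinuity criterion needs no hypothesis about one-sided limits --- which is fortunate, because the right limits of $R$ at dyadics do not exist, so the version you state (``one-sided limits existing everywhere'') would simply not apply to $R$. And your explicit approximants $R_n(x) = \xi(\rho(\beta'(x)\trunc{n}\per{0}))$ do not converge to $R$ at dyadic points: at $x = 1/2$ they give $\xi(\rho(1\per{0}\mkern2mu)) = 0$ for all large $n$, whereas $R(1/2) = 2/3$. Drop that construction and keep only the citation of the countable-discontinuity criterion, as the paper does.
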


\begin{proof}
The first assertion immediately derives from the continuity of $\xi$ and $\rho\restr{\C}$ (Proposition \ref{rhocont}), and the fact that $\beta$ is continuous everywhere but on the set $\D \setminus \{1\}$, where it is only left-continuous. Then, the second assertion follows by the fact that $R$ has countably many discontinuities (see \cite[Theorem 11.8]{Rooji} or \cite[Chapter Three, Section 34, Paragraph VII]{Kura}).
\end{proof}

It is worth remarking that the right-discontinuity of $R$ on $\D \setminus \{1\}$ is not simply due to the right-discontinuity of $\beta$ on that set, but it is instead also related to the fact that $\rho\restr{\C}$ is not continuously extendable, as stated in Proposition \ref{rhocont}.
In fact, as we will see in Proposition \ref{ClG}, for $x \in \D \setminus \{0,1\}$ the right limit $\lim_{y\,\searrow\,x} R(y)$ does not exist and even more 
\begin{equation*}
R(x)\notin\big[\mkern-1mu\liminf_{y\,\searrow\,x}R(y)\,,\,\limsup_{y\,\searrow\,x}R(y)\mkern1mu\big]\,.
\end{equation*}

For example:
\begin{eqnarray*}
R(1/2) \!\!\! &=& \!\!\! 2/3\,, \text{ \ while \ }
\liminf_{x\,\searrow\,1/2}R(x)=0 \text{ \ and \ } \limsup_{x\,\searrow\,1/2}R(x)=1/2,\\
R(1/4) \!\!\! &=& \!\!\! 1/3\,, \text{ \ while \ }
\liminf_{x\,\searrow\,1/4} R(x)=1/2 \text{ \ and \ } \limsup_{x\,\searrow\,1/4} R(x)=1.
\end{eqnarray*}

\medskip

In particular, $R$ is not a Darboux function, meaning that it does not satisfy the intermediate value property. Actually, it is not Darboux from the right for every $x \in \D \setminus \{1\}$ (see Corollary \ref{Darboux}). Thus, by Darboux's theorem, the integral function $\int_0^x \!R(t) dt$ is differentiable everywhere but on the set $\D \setminus \{1\}$, where it is only left-differentiable.

\medskip

By a classical characterization of Baire class 1 functions given by Lebesgue \cite{Lebesgue} (see also \cite[Section 4.4]{Bressoud} or \cite[Chapter Two, Section 31, Paragraph II]{Kura}), we know that for every $\varepsilon > 0$ there exists a countable closed covering $\{C_n\}_{n \geq 1}$ of $[0,1]$ such that the oscillation of $R$ on each $C_n$ is less than $\varepsilon$. We recall that the oscillation of a real function $f$ on a subset $S$ of its domain is defined as $O_f(S) = \sup_{x,y \in S}|f(x) - f(y)|$.

\medskip

Before going on, we want to provide an explicit construction of a covering $\{C_n\}_{n \geq 1}$ as above. First, we need to estimate the oscillation of $R$ on the dyadic intervals as follows.

\medskip

For every $x \in \{0,1\}^n$ and $n \geq 1$, we consider the cylinder $\cyl{x} \subset \B$
and the interval $\xi(\cyl{x}) = [0.x\per{0},0.x\per{1}] \subset [0,1]$, and observe that
\begin{equation}\label{Rcyl}
R(\xi(\cyl{x})) =
\left\{\vrule width0pt height16pt\right.\!\!\!
\begin{array}{ll}
[0,1] & \text{if $x = 0^n$}\\[2pt]
\xi(\{\rho(x\trunc{k-1}0\per{1})\} \cup \cyl{\rho(x)}) & \text{if $x \neq 0^n$}
\end{array},
\end{equation}
where $k$ is the index of the last occurrence of 1 in $x$.
In particular, in the latter case we have $R(\xi(\cyl{x})) \subset \xi([\rho(x\trunc{k-1})])$, which implies that
\begin{equation}\label{Ocyl}
O_R(\xi(\cyl{x})) \leq 1/2^h \ \ \text{with} \ \ h = |\rho(x\trunc{k-1})| = |x|_1 - 1\,.
\end{equation}

Now we can start with our construction.
Given $\varepsilon > 0$, we choose a positive integer $\ell$ such that $1/2^{\ell-1} < \varepsilon$, and consider the set $D_\ell \subset \D$ consisting of all dyadic rational $x$ whose binary expansion $\beta'(x)$ contains at most $\ell$ 1's, that is $|\beta'(x)|_1 \leq \ell$. Then, the ordered set $(D_\ell,\geq\mkern2mu)$ is isomorphic to the countable ordinal $\omega^\ell+1$, and hence $D_\ell$ has Cantor-Bendixson rank $\ell+1$. Moreover, considering $D_\ell$ with the standard order in $\mathbb R$, we have $\min D_\ell = 0$ and $\max D_\ell = 0.1^\ell = \xi(1^\ell)$, and for every $x \in D_\ell \setminus \{0\}$, the immediate predecessor of $x$ in $D_\ell$, that is the largest element of $D_\ell$ smaller than $x$, is $p(x) = \xi(\beta(x)\trunc{k}\per{0})$, where $k$ denotes the index of the $\ell$-th 1 in $\beta(x)$. Keeping the same notations, we put
\begin{equation*}
C_x = \left\{\vrule width0pt height24pt\right.\!\!\!
\begin{array}{ll}
\xi(\cyl{1^\ell}) = [\xi(1^\ell\per{0}),\xi(1^\ell\per{1})] = [0.1^\ell,1] & \text{if $x = 1$}\\[2pt]
\xi(\cyl{\beta(x)\trunc{k}}) = [\xi(\beta(x)\trunc{k}\per{0}),\xi(\beta(x)\trunc{k}\per{1})] = [p(x),x] & \text{if $x \in D_\ell \setminus \{0\}$}\\[2pt]
\{0\} & \text{if $x=0$}
\end{array}.
\end{equation*}

Then, $\{C_x\}_{x \in D_\ell \cup \{1\}}$ is a countable closed covering of $[0,1]$ with the wanted oscillation bound. Indeed, in all cases, we have $O_R(C_x) \leq 1/2^{\ell-1} < \varepsilon$ by \eqref{Ocyl}.

\medskip

Concerning the fibers, it immediately follows from \eqref{R} that
\begin{equation}\label{R-1}
R^{-1}(y) \cap (0,1] = \beta^{-1}(\rho^{-1}(\xi^{-1}(y))) = \xi(\rho^{-1}\{\beta(y),\beta'(y)\}))
\end{equation}
for every $y \in [0,1]$. This actually coincides with $R^{-1}(y)$ if $y \neq 2/3$, while 0 must be added to get $R^{-1}(y)$ if $y = 2/3$.

\medskip

Taking into account equation \eqref{fibereq} and the properties of the maps $\beta$ and $\beta'$ we have seen at the end of Section \ref{setting}, we have
\begin{equation}\label{R-1bis}
R^{-1}(y) = \left\{\vrule width0pt height30pt\right.\!\!\!
\begin{array}{ll}
\xi(\fibexp{\sigma(\beta(y))}) & \text{if $y \in [0,1] \setminus (\D \cup \{2/3\})$ or $y = 1$}\\
\xi(\fibexp{\sigma(\beta(y))}) \cup \{0\} & \text{if $y = 2/3$}\\
\xi(\fibexp{\sigma(\beta(y))}) \cup \xi(\fibexp{\sigma(\beta'(y))}) & \text{if $y \in \D \setminus \{0,1\}$}\\
\xi(\fibexp{\sigma(\beta'(y))}) & \text{if $y = 0$}
\end{array},
\end{equation}
where the following notation is used for $b \in \C$
\begin{equation}\label{fibexp}
\fibexp{b} = \{\binexp{a}{b} \st a \in \A\}.
\end{equation}

\medskip

Since $\xi\restr{\C}$ is injective, the fiber $R^{-1}(y)$ contains uncountably many elements for every $y \in [0,1]$.
Moreover, according to Proposition \ref{fiber}, any fiber $R^{-1}(y)$ admits a maximum element
\begin{equation}\label{S}
S(y) = \left\{\vrule width0pt height22pt\right.\!\!\!
\begin{array}{ll}
\xi(\sigma(\beta(y))) & \text{if $y \in [0,1] \setminus \D$ or $y = 1$}\\
\max\{\xi(\sigma(\beta(y))),\xi(\sigma(\beta'(y)))\} & \text{if $y \in \D \setminus \{0,1\}$}\\
\xi(\sigma(\beta'(y))) & \text{if $y = 0$}
\end{array}.
\end{equation}
The map $S: [0,1] \to [0,1]$ defined by the equation above is a null measure section for $R$, as we will see in Proposition \ref{section}. Here, we limit ourselves to consider the following continuity properties of it.

\begin{proposition}\label{contS}
The map $S$ is continuous everywhere but on the set $\D \setminus \{0,1\}$, where it is either left- or right-continuous, with different existing left- and right-limits. Hence, $S$ is a Borel function of Baire class $1$.
\end{proposition}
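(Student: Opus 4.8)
The plan is to reduce the continuity of $S$ to the already-established continuity properties of the ingredient maps $\beta$, $\beta'$, $\sigma$ and $\xi$, using the case-by-case formula \eqref{S}. First I would record the easy observations: $\xi$ is $2$-Lipschitz hence continuous everywhere, $\sigma\restr{\B}$ is $1$-Lipschitz by Proposition \ref{sigmacont}, and (from the end of Section \ref{setting}) $\beta$ and $\beta'$ agree and are continuous on $(0,1)\setminus\D$, while on $\D\setminus\{0,1\}$ the map $\beta$ is only left-continuous and $\beta'$ only right-continuous, and at the endpoints $\beta'$ is right-continuous at $0$ and $\beta$ is left-continuous at $1$. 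Composing, $y\mapsto\xi(\sigma(\beta(y)))$ and $y\mapsto\xi(\sigma(\beta'(y)))$ are each continuous off $\D\setminus\{0,1\}$, coincide there with a common continuous function, and on $\D\setminus\{0,1\}$ the first is left-continuous and the second right-continuous. Away from $\D\setminus\{0,1\}$, formula \eqref{S} gives $S$ as one of these compositions (or as the common value), so $S$ is continuous there. This disposes of every point of $[0,1]\setminus(\D\setminus\{0,1\})$.

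Next I would treat a point $x\in\D\setminus\{0,1\}$. Write $f_-(y)=\xi(\sigma(\beta(y)))$ and $f_+(y)=\xi(\sigma(\beta'(y)))$; by the above, $f_-$ is left-continuous at $x$ with left-limit $f_-(x)$, $f_+$ is right-continuous at $x$ with right-limit $f_+(x)$, and for $y\ne x$ near $x$ we have $f_-(y)=f_+(y)=:g(y)$ with $g$ continuous, so $\lim_{y\to x^-}S(y)=f_-(x)$ and $\lim_{y\to x^+}S(y)=f_+(x)$, because for $y<x$ close to $x$, $S(y)=g(y)\to f_-(x)$, and similarly from the right $S(y)=g(y)\to f_+(x)$; meanwhile $S(x)=\max\{f_-(x),f_+(x)\}$. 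Thus both one-sided limits of $S$ at $x$ exist, and $S$ is left-continuous at $x$ if $f_-(x)\ge f_+(x)$ and right-continuous at $x$ if $f_+(x)\ge f_-(x)$.

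The remaining — and genuinely substantive — point is to show that $f_-(x)\ne f_+(x)$ for every $x\in\D\setminus\{0,1\}$, so that the left- and right-limits are actually distinct and $S$ is exactly one-sidedly continuous (never two-sidedly) at each dyadic. Here I would compute $\beta(x)$ and $\beta'(x)$ explicitly: if $0.x_1\cdots x_k$ is the terminating binary expression of $x$ with $x_k=1$, then $\beta'(x)=x_1\cdots x_{k-1}1\per{0}$ and $\beta(x)=x_1\cdots x_{k-1}0\per{1}$. Applying $\sigma$ and then $\xi$, one gets two binary expansions whose common prefix $\sigma(x_1\cdots x_{k-1})$ is followed by genuinely different tails: on the $\beta'$ side the tail comes from $\sigma$ acting on $1\per{0}$ (starting "$01$" then a periodic pattern for $0\per{0}$), on the $\beta$ side from $\sigma$ acting on $0\per{1}$. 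Comparing the first digit after the common prefix where they differ — which is forced by whether the continuation digit is $0$ or $1$ after a known preceding digit, via rule \eqref{sigma} — shows the two real values differ. The bookkeeping of parities and of the $\sigma$-images of the eventually-constant tails $\per{0}$ and $\per{1}$ is the main obstacle; I would isolate it as the computation that $\sigma(w0\per{1})$ and $\sigma(w'1\per{0})$ have distinct images under $\xi$ whenever $w0$ and $w'1$ are the two binary expansions of the same dyadic. Once $f_-(x)\ne f_+(x)$ is in hand, the statement follows: $S$ is continuous off $\D\setminus\{0,1\}$, one-sidedly (but not both-sidedly) continuous with distinct one-sided limits on $\D\setminus\{0,1\}$, and hence a Borel function with countably many discontinuities, so of Baire class $1$ by the same citation used in Proposition \ref{Rcont}.
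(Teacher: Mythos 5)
Your overall strategy is the same as the paper's: reduce everything to the continuity properties of $\xi$, $\sigma\restr{\B}$, $\beta$ and $\beta'$, treat the dyadic points separately, and invoke the countable-discontinuity criterion of \cite{Rooji} for the Baire class~1 claim. Two points deserve comment. First, a small slip: it is not true that $f_-(y)=f_+(y)$ for all $y\neq x$ near a dyadic $x$, since every neighbourhood of $x$ contains other dyadic points, where the two functions disagree. The conclusion survives because what one really uses is that \emph{both} $\beta(y)$ and $\beta'(y)$ converge to $\beta(x)$ as $y\nearrow x$ and to $\beta'(x)$ as $y\searrow x$ (every $y\in(x-1/2^M,x)$ has all of its binary expansions beginning with $\beta(x)\trunc{M}$, and symmetrically on the right), so $\max\{f_-,f_+\}$ still has one-sided limits $f_-(x)$ and $f_+(x)$. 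This should be patched, but it is routine.

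Second, the step you isolate as the ``main obstacle'' --- that $f_-(x)\neq f_+(x)$ at each dyadic $x$ --- is much easier than your sketch suggests and requires no bookkeeping of parities or tails. Since $\rho(\sigma(b))=b$ for every $b\in\B$ (proof of Proposition \ref{fiber}), the map $\sigma\restr{\B}$ is injective; since $\sigma(\B)\subset\S\subset\C$ and $\xi\restr{\C}$ is a bijection onto $(0,1]$, the composition $\xi\circ\sigma$ is injective on $\B$. As $\beta(x)\neq\beta'(x)$ for $x\in\D\setminus\{0,1\}$, the two one-sided limits are automatically distinct. (Your digit-by-digit route would also close: the words $\sigma(\beta(x))$ and $\sigma(\beta'(x))$ already differ at the first position after the common prefix $\sigma(x_1\cdots x_{k-1})$, since rule \eqref{sigma} sends the differing $k$-th digits to $1$ and $01$ in some order. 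But it is not needed.) For what it is worth, the paper's own proof is terser than yours on exactly this point: its displayed inequality only bounds the jump from above and does not address distinctness, so your attempt is, if anything, more explicit about why the left- and right-limits differ.
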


\begin{proof}
The first assertion easily follows from the continuity properties of $\xi\,,\,\sigma\restr{\B}$ (Proposition \ref{sigmacont}), $\beta$ and $\beta'$, and the fact that, for every $y = (2k+1)/2^n \in \D \setminus \{0,1\}$, we have
\begin{equation*}
|\xi(\sigma(\beta(y)))-\xi(\sigma(\beta'(y)))| \leq 2\,d(\sigma(\beta(y)),\sigma(\beta'(y)))
\leq 2\,d(\beta(y),\beta'(y)) = 1/2^{n-1}.
\end{equation*}
Then, we can apply \cite{Rooji} as above to get the second assertion.
\end{proof}

Recalling how $\rho$ and $\widetilde\rho$ act on the binary digits of $x$ we readily deduce the next proposition.

\begin{proposition}The map $R$ satisfies the functional equations for every $x \in (0,1]$
\begin{equation}\label{functional1}
R\Big(\frac{x}{2}\Big)=1-R(x)\,,
\end{equation}
\begin{equation}\label{functional2}
R\Big(\frac{x+1}{2}\Big) =\frac{1-R(x)}{2}\,.
\end{equation}
\end{proposition}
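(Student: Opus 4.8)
The plan is to derive both functional equations directly from the description of how $\rho$ and $\widetilde\rho$ act on binary expansions, keeping careful track of parity shifts. Fix $x\in(0,1]$ and write $x = 0.b$ with $b = \beta(x) \in \C$, so that $R(x) = 0.\rho(b)$. The key observation is that prepending a single digit to $b$ shifts every subsequent position by one, hence swaps odd and even positions; by \eqref{rhok} and \eqref{rhov} this replaces the action of $\rho$ on the tail by the action of $\rho_1 = \widetilde\rho$. So the heart of the argument is to understand the binary expansions of $x/2$ and $(x+1)/2$ in $\C$ and then compute.

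First I would treat \eqref{functional1}. Since $x\in(0,1]$, we have $x/2\in(0,1/2]$, and its binary expansion in $\C$ is $\beta(x/2) = 0b$ (prepend a $0$): indeed $0b$ is not eventually $0$ because $b\in\C$ isn't, and $\xi(0b) = \xi(b)/2 = x/2$. Now $\rho(0b) = \rho(0)\rho_0(b) = \epsilon\,\rho(b)$? No — careful: $0$ has length $1$, which is odd, so $\rho(0b) = \rho(0)\rho_1(b) = \widetilde\rho(b)$ by \eqref{rhov} with $v = 0$. Thus $R(x/2) = 0.\widetilde\rho(b)$. Since $\widetilde\rho$ and $\rho$ differ exactly by swapping the output digit $0\leftrightarrow 1$ on each surviving $1$ (compare \eqref{rho} and \eqref{tilderho}), the sequence $\widetilde\rho(b)$ is the bitwise complement of $\rho(b)$ at every position, i.e. $\widetilde\rho(b)_n = 1 - \rho(b)_n$ for all $n$ — provided $\rho(b)$ is infinite, which holds because $b\in\C$ has infinitely many $1$'s in positions of both parities (one still needs the easy remark that infinitely many of those $1$'s sit in even positions, so $\rho(b)$ has infinitely many $1$'s and is genuinely infinite). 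Hence $\xi(\widetilde\rho(b)) = \sum_n (1-\rho(b)_n)/2^n = 1 - \xi(\rho(b))$, giving $R(x/2) = 1 - R(x)$.

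Next, \eqref{functional2}: for $x\in(0,1]$ we have $(x+1)/2 \in (1/2,1]$, with $\C$-expansion $\beta((x+1)/2) = 1b$ (prepend a $1$): $\xi(1b) = 1/2 + \xi(b)/2 = (x+1)/2$, and $1b\in\C$. Again the prefix $1$ has odd length, so $\rho(1b) = \rho(1)\rho_1(b) = 0\,\widetilde\rho(b)$, since the single digit $1$ sits in (odd) position $1$ and maps to $0$. Therefore $R((x+1)/2) = 0.0\widetilde\rho(b) = \tfrac12\,\xi(\widetilde\rho(b)) = \tfrac12(1 - R(x))$ by the complement identity just established. This is exactly \eqref{functional2}.

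The only genuine subtlety — and the step I would write out with a little care rather than waving at — is the complementation identity $\xi(\widetilde\rho(b)) = 1 - \xi(\rho(b))$ and its hypothesis that $\rho(b)$ is an infinite word with infinitely many $1$'s and infinitely many $0$'s, so that this is the honest binary-expansion identity with no dyadic boundary issue; this is where one uses $b\in\C$ together with the fact that $b$ has infinitely many $1$'s in even positions (otherwise $b$ would be eventually constant on even coordinates, forcing $b$ eventually $0$, contradicting $b\in\C$). Everything else is the bookkeeping of a one-position shift, handled uniformly by \eqref{rhov} and \eqref{rhok}. A remark worth adding: the choice $R(0) = 2/3$ is consistent with \eqref{functional1} at the ``boundary'' in the sense that iterating \eqref{functional1}–\eqref{functional2} forces $2/3$ as the value pinned down by the fixed-point relation $R(0) = 1 - R(0/2)$ is vacuous, but rather $R$ restricted to $\{0, 2/3, 1/3, \dots\}$ is forced — though strictly this lies outside the statement and I would relegate it to a remark.
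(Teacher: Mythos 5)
Your proof is correct and follows essentially the same route as the paper's: prepend $0$ or $1$ to $\beta(x)$, use the parity shift to turn $\rho$ into $\widetilde\rho$ on the tail, and conclude via the complementation identity $\xi(\widetilde\rho(b))=1-\xi(\rho(b))$. One small caveat: your parenthetical claim that every $b\in\C$ has infinitely many $1$'s in even positions is false (e.g.\ $b=\pper{10}$), but it is also unnecessary --- the identity only needs $\rho(b)$ to be an infinite sequence, which follows because each of the infinitely many $1$'s of $b\in\C$ contributes exactly one digit to $\rho(b)$, and $\xi$ is defined on all of $\B$.
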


\begin{proof}
For every $x \in (0,1]$, we have
\begin{equation*}
R\Big(\frac{x}{2}\Big) = \xi\Big(\rho\Big(\beta\Big(\frac{x}{2}\Big)\Big)\Big) = \xi(\rho(0\beta(x))) = \xi(\widetilde\rho(\beta(x))) = 1 - \xi\rho(\beta(x))) = 1 - R(x)\,,
\end{equation*}
\begin{equation*}
R\Big(\frac{x+1}{2}\Big) = \xi\Big(\rho\Big(\beta\Big(\frac{x+1}{2}\Big)\Big)\Big) = \xi(\rho(1\beta(x))) = \xi(0\widetilde\rho(\beta(x))) = \frac{1 - \xi\rho(\beta(x)))}{2} = \frac{1 - R(x)}{2}\,.\ \ 
\end{equation*}
\vskip-\lastskip
\vskip-1.2\baselineskip
\end{proof}
\vskip12pt

Notice that the relations \eqref{functional1} and \eqref{functional2} are verified by any map defined by means of a generalized substitution of type \eqref{rho}, when odd-indexed 1's go into a word $w$ and even-indexed 1's go into its complementary word $\widetilde{w}$.

\medskip

By using the relations \eqref{functional1} and \eqref{functional2}, one can get a countable family of functional equations, one for each word $w\in\{0,1\}^\infty$. Namely, if $w$ is a finite binary word such that $|w|=n$, $\rho(w)=v$ and $|v|=m$, for every $x\in (0,1]$ we have
\begin{equation*}
R\Big(\frac{x}{2^n} +\sum_{i=1}^n \frac{w_i}{2^i}\Big) = 
\sum_{i=1}^m \frac{v_i}{2^i} + \frac{1}{2^m}\Big(\frac{1+(-1)^{n+1}}{2}+(-1)^n R(x)\Big).
\end{equation*}

Moreover, from \eqref{functional1} and \eqref{functional2} it easily follows that
\begin{equation}\label{Reqn}
R(x) = \left\{\vrule width0pt height28pt\right.\!\!\!
\begin{array}{ll}
\displaystyle
2\,R\Big(x+\frac{1}{2}\mskip2mu\Big) & \text{if $x \in (0,1/2]$}\\[12pt]
\displaystyle
\frac{1}{2}\, R\Big(x-\frac{1}{2}\mskip2mu\Big) & \text{if $x \in (1/2,1]$}
\end{array}.
\end{equation}


\section{Graph properties}
\label{graph}

In this section we consider some properties of the map $R$ related to the geometry of its graph $G = \{(x,R(x)) \st x \in [0,1]\} \subset [0,1]^2$, which is depicted in Figure \ref{grafico} below. 

\medskip

We remark that there is a priori no reason to think that the dots in the picture represent a numerical approximation of points of the graph of $R$, as numerical computation is of course based on (finite) 
binary expressions of dyadic rational numbers, whereas $\rho$ acts always on the infinite
representation and is discontinuous precisely on dyadic rationals. However, the picture still 
represents a numerical approximation of the graph of $R$ for a somewhat deeper reason, that is because we can interpret it as the plot of an element of a sequence of step functions, constant on cylinder sets sharing a common finite prefix, converging to $R$. That this ``numerical'' convergence is mathematically meaningful is indeed ensured by Proposition 3.1, which establishes that $R$ can be point-wise limit of \emph{continuous} functions.

\medskip

\begin{figure}[h]
\centering
\includegraphics[scale=0.15]{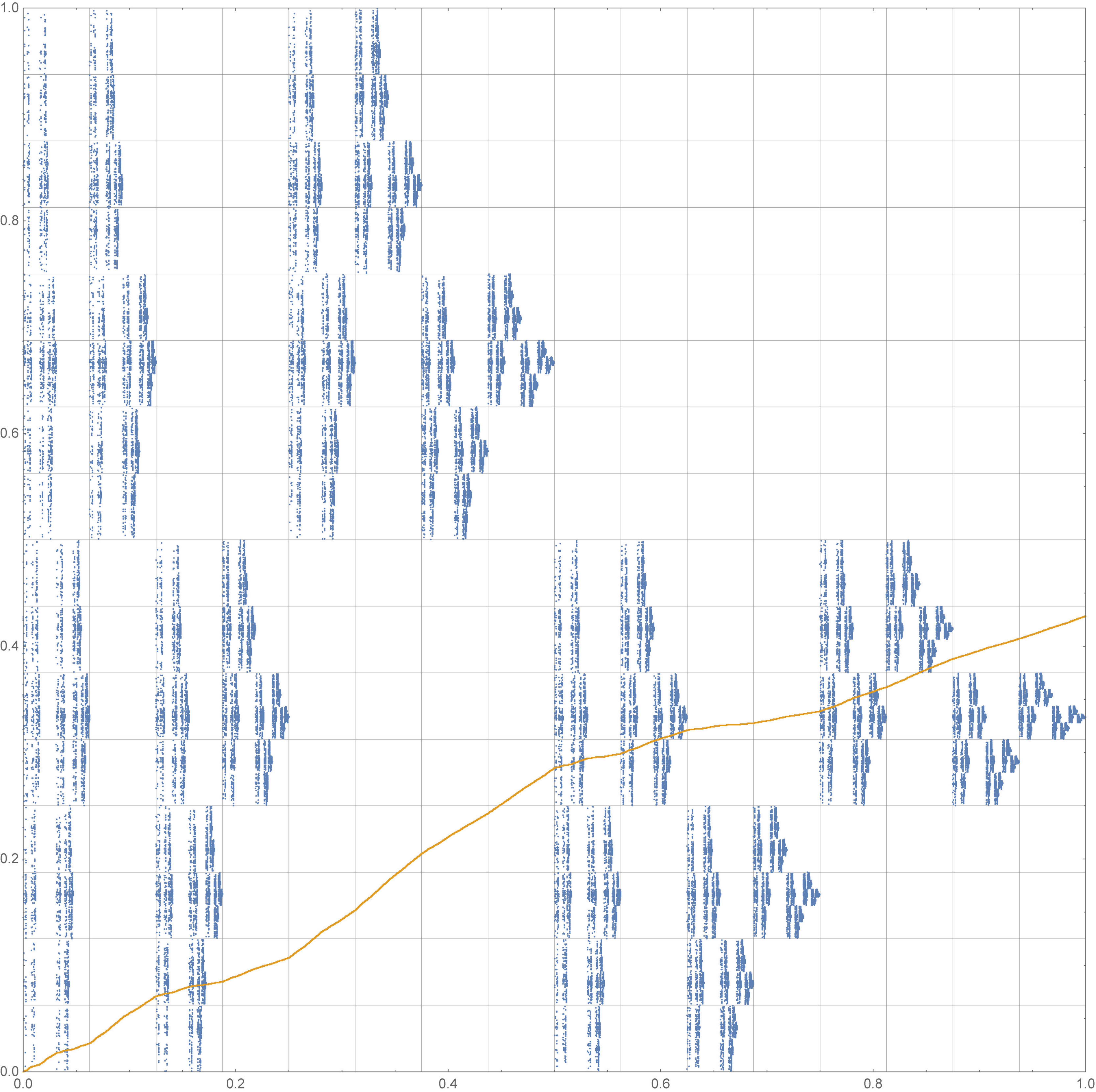} 
\caption{The graph of the map $R$ (blue) and of its integral function (orange).}
\label{grafico}
\end{figure}

\medskip

The functional relations \eqref{functional1} and \eqref{functional2} induce an interesting recursive structure on $G$.

\medskip

Consider, the transformations $T_0,T_1: [0,1]^2 \to [0,1]^2$ defined by the equations
\begin{equation*}
T_0(x,y) = \Big(\frac{x}{2}\,,1-y\Big) \quad \text{and}\quad T_1(x,y) = \Big(\frac{x+1}{2}\,,\,\frac{1-y}{2}\Big).
\end{equation*}

Let $\T$ be the operator which associates to each subset $E \subset [0,1]^2$ the subset
\begin{equation*}
\T(E) = T_0(E) \cup T_1(E).
\end{equation*}

Of course, $\T$ preserves compactness. Moreover, \eqref{functional1} and \eqref{functional2} imply the equality $\T(H) = H$ for $H = G - \{(0,2/3)\}$. Indeed, for every $x \in (0,1]$, we have
\begin{equation*}
T_0(x,R(x)) = \Big(\frac{x}{2}\,,\, 1 - R(x)\Big) = \Big(\frac{x}{2}\,,\,R\Big(\frac{x}{2}\Big)\Big)
\end{equation*}
and
\begin{equation*}
T_1(x,R(x)) = \Big(\frac{x+1}{2}\,,\, \frac{1 - R(x)}{2}\Big) = \Big(\frac{x+1}{2}\,,\,R\Big(\frac{x+1}{2}\Big)\Big).
\end{equation*}
Therefore,
\begin{equation*}
T_0(H) = H \cap ([0,1/2] \times [0,1]) \quad\text{and}\quad T_1(H) = H \cap ([1/2,1] \times [0,1]),
\end{equation*}
which gives $\T(H)=H$.

\medskip

We inductively define $K_n \subset [0,1]^2$ for $n \geq 0$, by putting
\begin{equation*}
K_0 = [0,1]^2 \text{ \ and \ } K_{n+1} = \T(K_n) = T_0(K_n) \cup T_1(K_n),
\end{equation*}
where 
\begin{equation}\label{TK}
T_0(K_n) = K_{n+1} \cap ([0,1/2] \times [0,1]) \quad \text{and} \quad T_1(K_n) = K_{n+1} \cap ([1/2,1] \times [0,1]).
\end{equation}
The trivial inclusion $K_1 \subset K_0$ implies that $K_{n+1} \subset K_n$ for every $n \geq 0$. So, the $K_n$'s form a decreasing sequence of compact subspaces of $[0,1]^2$ converging to
\begin{equation*}
K_\infty = \cap_{\,n\geq0\,}K_n \subset [0,1]^2.
\end{equation*}

\begin{figure}[htb]
\centering
\includegraphics[scale=0.25]{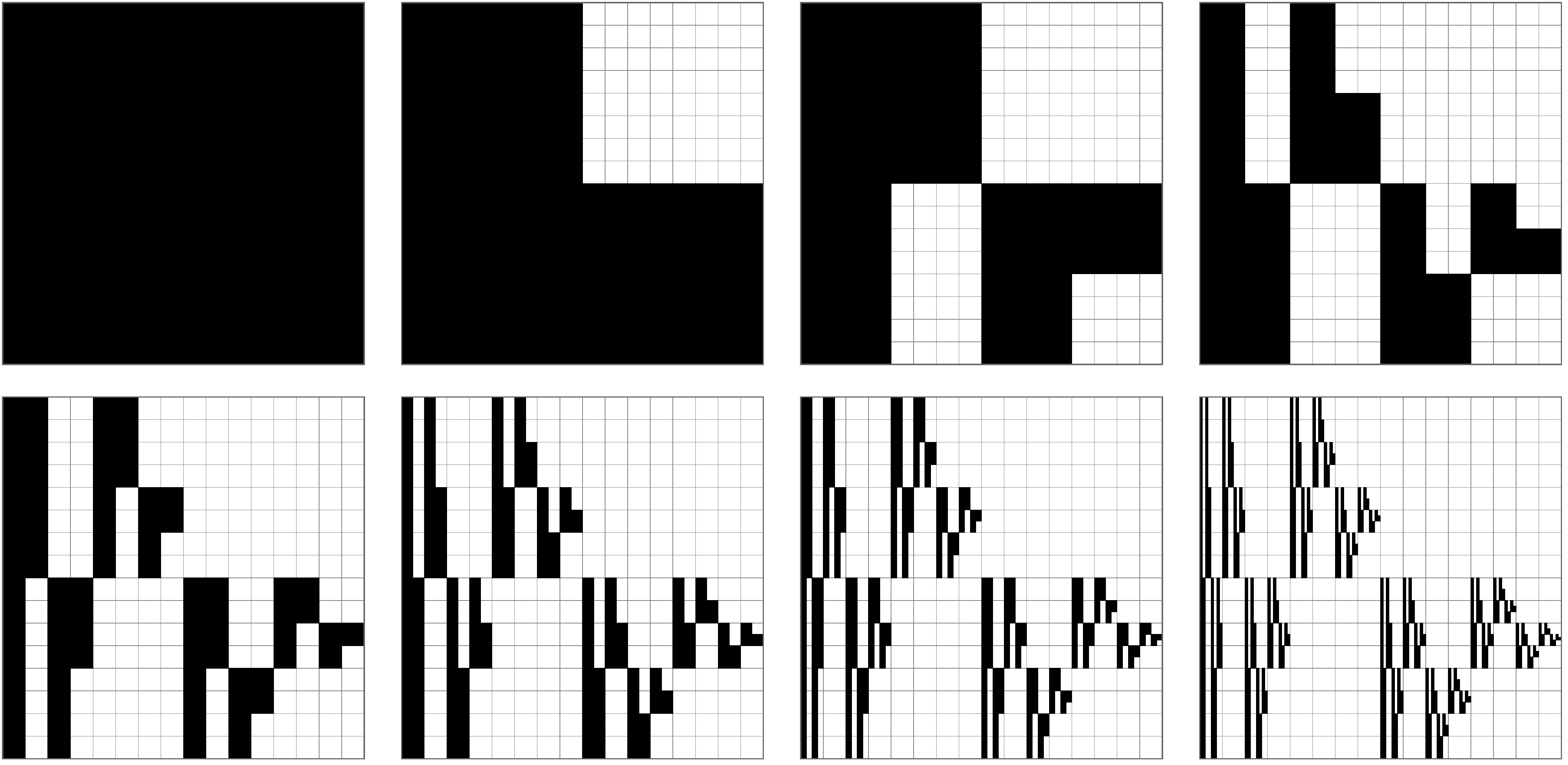} 
\caption{$K_n$ for $n= 0, \dots, 7$.}
\label{rettangoli}
\end{figure}

In order to describe the sequence of the $K_n$'s from a different point of view, we consider the maps
$t_0,t_1:[0,1] \to [0,1]$ defined by 
\begin{equation*}
t_0(y) = 1-y \quad \text{and} \quad t_1(y) = \frac{1-y}{2},
\end{equation*}
which give the action of $T_0$ and $T_1$ on the second coordinate.

\pagebreak 

\medskip

Moreover, we consider the intervals
\begin{equation*}
I_0=[0,1] \quad \hbox{and} \quad I_x = t_{x_1}\!\circ t_{x_2}\!\circ\cdots \circ t_{x_n}([0,1])\quad \hbox{for} \quad x \in \D \setminus \{1\}
\end{equation*}
where $0.x_1x_2 \dots x_n$ is a finite binary expression of $x$. Since the terminal 0's in the binary word $x_1x_2\dots x_n$ do not affect the interval $I_x$, this is well-defined, depending only on $x$.

\medskip

We point out that the interval $I_x$ satisfies the identities
\begin{equation}\label{due}
t_0(I_{x}) = I_{x/2} \quad \text{and} \quad t_1(I_{x}) = I_{(x+1)/2}\,.
\end{equation}
Then, by reasoning inductively as above, the interval $I_x$ for $x=0.x_1 \dots x_n$ can be shown to coincide with the cylinder generated by the word $\rho (x_1\dots x_n)$, that is
\begin{equation}\label{Ix}
 I_x= \cyl{\mkern1mu\rho (x_1\dots x_n)}\,.
\end{equation}

\medskip

For every $n \geq 0$, we can decompose $K_n$ as a union of rectangles as follows
\begin{equation*}
K_n=\cup_{k=0,\dots,2^n-1} [k/2^n,(k+1)/2^n] \times I_{k/2^n}.
\end{equation*}
This equality is trivially true for $n =0$, while it can be proved by induction for $n>0$ by taking into account of \eqref{due},
from which
\begin{equation*}
T_0(K_n)=\cup_{k=0,\dots,2^n-1} [k/2^{n+1},(k+1)/2^{n+1}] \times I_{k/2^{n+1}}
\end{equation*}
and
\begin{equation*}
T_1(K_n)=\cup_{k=2^n,\dots,2^{n+1}-1} [k/2^{n+1},(k+1)/2^{n+1}] \times I_{k/2^{n+1}}.
\end{equation*}

\medskip

Then, we can obtain $K_{n+1}$ from $K_n$ by replacing each rectangle
\begin{equation*}
[k/2^n,(k+1)/2^n] \times I_{k/2^n}
\end{equation*}
by the union
\begin{equation*}
([k/2^n,(2k+1)/2^{n+1}] \times I_{k/2^n}) \cup ([(2k+1)/2^{n+1},(k+1)/2^n] \times I_{(2k+1)/2^{n+1}}).
\end{equation*}

\begin{remark}\label{3/4}
Passing from $K_n$ to $K_{n+1}$ amounts to split each rectangle $[k/2^n,(k+1)/2^n] \times I_{k/2^n}$\break into four congruent closed sub-rectangles and remove the right-top or right-bottom one, depending on $n$ being even or odd respectively, while keeping the union of the other three sub-rectangles.
\end{remark}

Now, come back to $H$ and $G$. The trivial inclusion $H \subset K_0$ and the equality $\T(H) = H$ imply that $H \subset K_n$ for every $n \geq 0$. On the other hand, as
\begin{equation*}
\{0\} \times [0,1]= T_0(\{0\} \times [0,1])\subset\T(\{0\} \times [0,1]),
\end{equation*}
we also have $\{0\} \times [0,1]\subset K_n$ for every $n \geq 0$. Hence, $G \subset (\{0\} \times [0,1])\cup H$ implies $G\subset K_{\infty}$, and therefore $\Cl{G} \subset K_\infty$. We want to show that the reversed inclusion holds as well. In fact, we have the following slightly stronger result.

\begin{proposition}\label{ClG}
The following equalities hold
\begin{equation*}
K_\infty = \Cl{G} = G \cup(\cup_{x\in\D \setminus \{1\}}(\{x\} \times I_x)).
\end{equation*}
Moreover, for every $x\in\D \setminus \{0,1\}$,
\begin{equation*}
R(x) \notin I_x = \big[\mkern-1mu\liminf_{\xi\,\searrow\,x} R(\xi),\limsup_{\xi\,\searrow\,x} R(\xi)\mkern1mu\big].
\end{equation*} 
\end{proposition}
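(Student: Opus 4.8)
The strategy is to use the already established inclusion $\Cl G\subseteq K_\infty$ and to close the loop by proving
\[
K_\infty\ \subseteq\ G\cup\bigcup\nolimits_{x\in\D\setminus\{1\}}(\{x\}\times I_x)\ \subseteq\ \Cl G ,
\]
so that the three sets coincide; the assertion about one-sided limits then drops out of the same two inclusions restricted to a single fibre.

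For $K_\infty\subseteq G\cup\bigcup_{x\in\D\setminus\{1\}}(\{x\}\times I_x)$, fix $(x,y)\in K_\infty$ and use the rectangular description $K_m=\bigcup_k[k/2^m,(k+1)/2^m]\times I_{k/2^m}$ together with \eqref{Ix}. If $x\notin\D$, then for each $m$ the point $x$ lies in the interior of a unique order-$m$ dyadic interval, whose $K_m$-fibre is the cylinder $\cyl{\rho(\beta(x)\trunc{m})}$, of vanishing length $2^{-|\beta(x)\trunc{m}|_1}$ (since $\beta(x)$ has infinitely many $1$'s); as it contains both $y$ and $R(x)=\xi(\rho(\beta(x)))$, we get $y=R(x)$ and $(x,y)\in G$. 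If $x=p/2^n\in(0,1)$ with $p$ odd, i.e.\ $x=0.x_1\dots x_n$ with $x_n=1$ and $\beta(x)=x_1\dots x_{n-1}0\per{1}$, then for $m\ge n$ the only order-$m$ dyadic intervals having $x$ as an endpoint are the two adjacent to it: the right one has $K_m$-fibre $\cyl{\rho(x_1\dots x_n)}=I_x$ (trailing $0$'s being killed by $\rho$), while the left one has fibre $\cyl{\rho(x_1\dots x_{n-1}0\mkern1mu1^{m-n})}$, a cylinder of vanishing length containing $R(x)$ (because $x_1\dots x_{n-1}0\mkern1mu1^{m-n}$ is a prefix of $\beta(x)$ and $\rho$ maps prefixes to prefixes). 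Hence $y\in I_x$, or else $y$ lies in the left fibre for all $m\ge n$ and thus $y=R(x)$; either way $(x,y)\in(\{x\}\times I_x)\cup G$. For $x=0$ the only interval containing $x$ is $[0,2^{-m}]$, with fibre $I_0=[0,1]$, so $(0,y)\in\{0\}\times I_0$.

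For the reverse inclusion it suffices to show $\{x\}\times I_x\subseteq\Cl G$ for every $x\in\D\setminus\{1\}$, written in reduced form $x=0.x_1\dots x_n$ (empty word if $x=0$). Put $v=\rho(x_1\dots x_n)$, so $I_x=\xi(\cyl{v})$, and fix $y=\xi(vu)\in I_x$ with $u\in\B$. By Proposition \ref{fiber} the fibre $(\rho\restr{\C})^{-1}(u)=\{\binexp{a}{\sigma(u)}\st a\in\A\}$ contains, for each $j\ge1$, an element $b$ whose initial run of $0$'s has length $\ge n+j$ (take $a_1$ large); writing $b=0^nT_j$ one has $T_j\in\C$, and by \eqref{rhov} $\rho_n(T_j)=\rho(0^nT_j)=\rho(b)=u$, so the point $x_j:=\xi(x_1\dots x_nT_j)$ has $\beta(x_j)=x_1\dots x_nT_j$ and $R(x_j)=\xi\big(\rho(x_1\dots x_n)\mkern1mu\rho_n(T_j)\big)=\xi(vu)=y$. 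Since $x_j>x$ and $x_j-x=2^{-n}\xi(T_j)\le2^{-n-j}\to0$, the points $(x_j,y)\in G$ converge to $(x,y)$, whence $(x,y)\in\Cl G$. Together with $G\subseteq\Cl G$ this closes the loop and gives $K_\infty=\Cl G=G\cup\bigcup_{x\in\D\setminus\{1\}}(\{x\}\times I_x)$.

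It remains to treat $x\in\D\setminus\{0,1\}$, in reduced form $x=0.x_1\dots x_n$ with $x_n=1$. From $\beta(x)=x_1\dots x_{n-1}0\per{1}$ and \eqref{rhov} one computes $R(x)=\xi(\rho(x_1\dots x_{n-1}))+2^{-|\rho(x_1\dots x_{n-1})|}\,c$, with $c=1/3$ if $n$ is even and $c=2/3$ if $n$ is odd; since $I_x=\cyl{\rho(x_1\dots x_n)}$ is the left half of the interval $\cyl{\rho(x_1\dots x_{n-1})}$ when $n$ is odd and its right half when $n$ is even, while $R(x)$ falls strictly in the opposite half (because $1/3<1/2<2/3$), we get $R(x)\notin I_x$. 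For the equality $I_x=\big[\liminf_{\xi\,\searrow\,x}R(\xi),\limsup_{\xi\,\searrow\,x}R(\xi)\big]$, note that any $\xi\in(x,x+2^{-n})$ lies in the interior of the order-$n$ dyadic interval $[x,x+2^{-n}]$, so $(\xi,R(\xi))\in K_n$ forces $R(\xi)\in I_x$; hence every right cluster value of $R$ at $x$ lies in $I_x$, while the construction above exhibits each $y\in I_x$ as such a cluster value (via $x_j\searrow x$ with $R(x_j)=y$). The set of right cluster values being exactly the interval $I_x$, its minimum and maximum are $\liminf_{\xi\,\searrow\,x}R(\xi)$ and $\limsup_{\xi\,\searrow\,x}R(\xi)$, which is the claim. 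The step requiring most care is the interplay between the position-parity built into $\rho$ and the combinatorics of truncations and leading-zero insertions — in particular pinning down the explicit value of $R$ on $\D\setminus\{0,1\}$ and verifying that the left $K_m$-fibre collapses onto $R(x)$ rather than onto a neighbouring point; the rest is bookkeeping.
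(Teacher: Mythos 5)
Your argument is correct, and its overall architecture coincides with the paper's: first identify the fibre $K_\infty\cap(\{x\}\times[0,1])$ for each $x$, then show $\{x\}\times I_x\subset\Cl{G}$ by approaching $(x,y)$ from the right with graph points $(x_j,R(x_j))$, $R(x_j)=y$, and finally read off the one-sided limit statement from these two facts. Where you differ is in the execution of the fibre computation. The paper isolates this as a separate lemma proved by induction on the length of the dyadic expansion, using the self-affine recursion $K_n\cap(\{x\}\times[0,1])=T_{x_1}(K_{n-1}\cap(\{x'\}\times[0,1]))$ with $x'=2x\ (\mathrm{mod}\ 1)$; the non-membership $R(x)\notin I_x$ is then propagated along the induction from the base case $x=1/2$. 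You instead work directly with the rectangle decomposition $K_m=\cup_k\,[k/2^m,(k+1)/2^m]\times I_{k/2^m}$: at a dyadic $x$ you identify the two adjacent order-$m$ rectangles, observe that the left fibre $\cyl{\rho(x_1\dots x_{n-1}01^{m-n})}$ is a nested sequence of cylinders of vanishing length collapsing onto $R(x)$ while the right fibre is the fixed cylinder $I_x$, and you obtain $R(x)\notin I_x$ by an explicit computation placing $R(x)$ at relative position $1/3$ or $2/3$ inside $\cyl{\rho(x_1\dots x_{n-1})}$, on the opposite side from the half-cylinder $I_x$. This is arguably more transparent (it pins down the exact value of $R$ at dyadic points and explains the $1/3$--$2/3$ pattern seen in the examples after Proposition \ref{Rcont}), at the cost of redoing by hand the bookkeeping that the paper's $T_0,T_1$ recursion automates. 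Your density argument also handles the position-parity more carefully than the paper's sketch, by extracting a preimage of the tail $u$ with a long initial run of $0$'s and splitting off $0^n$ so that the residual word is acted on by $\rho_n$ as required. Two cosmetic points: the fibre over $x=1$ (which must land in $G$) is not covered by your three cases, though the same nested-cylinder argument applied to $\cyl{\rho(1^m)}$ disposes of it; and in the final step one should say explicitly that the set of right cluster values, being the intersection over $m$ of the closed sets $\Cl{R((x,x+1/2^m))}$, is closed and hence equals the closed interval $I_x$ exactly, so that its infimum and supremum are attained and coincide with the stated $\liminf$ and $\limsup$.
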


In order to prove the proposition a technical lemma is needed.

\begin{lemma} For every $x \in (0,1)$ we have
\begin{equation*}
K_\infty \cap (\{x\} \times [0,1]) = 
\left\{\vrule width0pt height15pt\right.\!\!\!
\begin{array}{ll}
\{x\} \times (\{R(x)\} \cup I_x) \text{\rm\ with } R(x) \notin I_x & \text{\rm if $x\in\D$}\\[2pt]
\{(x,R(x))\} & \text{\rm if $x\notin\D$}\end{array}.
\end{equation*}
\end{lemma}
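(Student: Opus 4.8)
The plan is to analyze the vertical slice $K_\infty \cap (\{x\}\times[0,1])$ by tracking, for each $n$, the vertical slice of $K_n$ above $x$. From the rectangle decomposition $K_n=\bigcup_{k}[k/2^n,(k+1)/2^n]\times I_{k/2^n}$, the slice of $K_n$ over a fixed $x\in(0,1)$ is determined by which dyadic rectangle(s) contain $x$. If $x\notin\D$, then for each $n$ there is a unique $k=k_n(x)$ with $x\in(k/2^n,(k+1)/2^n)$, namely $k/2^n$ is the $n$-th truncation $0.b\trunc n$ of the binary expansion $b=\beta(x)$, so the slice of $K_n$ at $x$ is exactly $\{x\}\times I_{0.b\trunc n}$. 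By \eqref{Ix} this equals $\{x\}\times\cyl{\rho(b\trunc n)}$, and since $b\in\C$ (as $x\notin\D$) the lengths $|\rho(b\trunc n)|=|b\trunc n|_1\to\infty$, so these cylinders shrink down to the single point $\xi(\rho(b))=R(x)$. Intersecting over $n$ gives $K_\infty\cap(\{x\}\times[0,1])=\{(x,R(x))\}$, which is the second case.

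Next I would treat $x\in\D\cap(0,1)$. Write $x=0.x_1\dots x_m$ with $x_m=1$ the terminal $1$. For $n\le m$ the point $x$ is the right endpoint $(k+1)/2^n$ of the rectangle $[k/2^n,(k+1)/2^n]\times I_{k/2^n}$ with $k/2^n=0.x_1\dots x_{n}$ (dropping trailing zeros appropriately) and simultaneously, for $n>m$, it is the left endpoint $k/2^n=x$ of the rectangle $[x,x+1/2^n]\times I_x$. So the slice of $K_n$ at $x$ is, for $n$ large, the union of two intervals: one coming from the rectangle to the left of $x$, one from the rectangle $[x,x+1/2^n]\times I_x$ whose vertical interval is $I_x$ (independent of $n$), hence stays present in every $K_n$, $n\ge m$. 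That gives the $\{x\}\times I_x$ piece of the slice. For the piece coming from the left: the left neighbour rectangle of $x$ in $K_n$ has base $[k_n/2^n,x]$ where $k_n/2^n=0.(x_1\dots x_{m-1}0\per1)\trunc n$, i.e. the expansion is $\beta(x)=x_1\dots x_{m-1}0\per1$; since $\beta(x)\in\C$, again $|\rho(\beta(x)\trunc n)|\to\infty$, so the vertical intervals $I_{k_n/2^n}=\cyl{\rho(\beta(x)\trunc n)}$ shrink to the single point $\xi(\rho(\beta(x)))=R(x)$. Thus $K_\infty\cap(\{x\}\times[0,1])=\{x\}\times(\{R(x)\}\cup I_x)$.

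It remains to show $R(x)\notin I_x$ for $x\in\D\setminus\{0,1\}$. Here I would use \eqref{Ix}: $I_x=\cyl{\rho(x_1\dots x_m)}$ with $x_m=1$, while $R(x)=\xi(\rho(\beta(x)))=\xi(\rho(x_1\dots x_{m-1}0\per1))$. Now $\rho(x_1\dots x_m)$ ends with the image of $x_m=1$, which is $0$ if $m$ is odd and $1$ if $m$ is even; in either case, by \eqref{rhov} the word $\rho(\beta(x))=\rho(x_1\dots x_{m-1}0)\,\rho_{x_1\dots x_{m-1}0}(\per1)$ continues past the prefix $\rho(x_1\dots x_{m-1})$ with digits that contradict membership in the cylinder $\cyl{\rho(x_1\dots x_m)}=\cyl{\rho(x_1\dots x_{m-1})\,c}$ for the fixed digit $c\in\{0,1\}$: one computes that $\rho(\beta(x))$ disagrees with every element of that cylinder at position $|\rho(x_1\dots x_{m-1})|+1$, since $\rho(x_1\dots x_{m-1}0\per1)$ places the complementary digit $1-c$ there (because prepending the erased $0$ and then the tail of $1$'s shifts the parity relative to the single final $1$ of $x$). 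Hence $\xi(\rho(\beta(x)))$ lies in the complementary dyadic subinterval and $R(x)\notin I_x$.

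The main obstacle is the last parity bookkeeping: carefully checking that the digit of $\rho(\beta(x))$ in the critical position is the opposite of the corresponding digit forced by the cylinder $I_x$, in both the $m$ even and $m$ odd cases, and confirming this puts $R(x)$ strictly outside the \emph{closed} interval $I_x$ rather than merely at an endpoint. Everything else is a routine unwinding of the rectangle decomposition of $K_n$ and the fact, already recorded in \eqref{Ix} and in the continuity proof of Proposition \ref{rhocont}, that $|\rho(b\trunc n)|=|b\trunc n|_1\to\infty$ for $b\in\C$ so that the relevant cylinders collapse to points.
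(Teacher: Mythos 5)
Your proof is correct, but on the dyadic case --- which is the substantial part of the lemma --- it takes a genuinely different route from the paper's. The paper argues by induction on the length $k$ of the shortest binary expression of $x$, using the self-affine relation $K_n\cap(\{x\}\times[0,1])=T_{x_1}(K_{n-1}\cap(\{x'\}\times[0,1]))$ with $x'=2x\ (\mathrm{mod}\ 1)$: the base case $x=1/2$ is computed explicitly, and the inductive step transports along $t_{x_1}$ both the auxiliary nested intervals $J_{x',n}$ shrinking to $R(x')$ and the non-membership $R(x')\notin I_{x'}$, so that $R(x)\notin I_x$ is inherited for free from $R(1/2)=2/3\notin[0,1/2]$. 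You instead read everything off the rectangle decomposition $K_n=\cup_k\,[k/2^n,(k+1)/2^n]\times I_{k/2^n}$: for $n\geq m$ the slice over a dyadic $x=0.x_1\dots x_m$ meets exactly two rectangles, the right one contributing the constant interval $I_x$ and the left one contributing $I_{(k-1)/2^n}=\cyl{\rho(\beta(x)\trunc{n})}$, a nested sequence of dyadic intervals collapsing onto $\xi(\rho(\beta(x)))=R(x)$; you then prove $R(x)\notin I_x$ by a direct parity computation on digits. Your version is more direct, avoids the induction, and makes the role of the expansion $\beta(x)\in\C$ transparent; the paper's induction buys freedom from any digit bookkeeping. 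The one loose end you flag does close: writing $w=\rho(x_1\dots x_{m-1})$, $h=|w|$ and $c$ for the last digit of $\rho(x_1\dots x_m)$, the tail of $\rho(\beta(x))$ after the prefix $w$ is $\pper{01}$ if $m$ is even (where $c=1$) and $\pper{10}$ if $m$ is odd (where $c=0$), so $R(x)=0.w+\tfrac{1}{3}2^{-h}$, which is strictly below the left endpoint $0.w+2^{-(h+1)}$ of $I_x=\cyl{w1}$, respectively $R(x)=0.w+\tfrac{2}{3}2^{-h}$, strictly above the right endpoint $0.w+2^{-(h+1)}$ of $I_x=\cyl{w0}$; so $R(x)$ is never an endpoint of $I_x$. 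Your treatment of the non-dyadic case coincides in substance with the paper's.
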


\begin{proof}
The case when $x\in\D$ immediately follows from the definition of $K_\infty$ and the following statement: if $0.x_1\dots x_k$ is the shortest binary expression of $x$, then for every $n \geq k$
\begin{equation*}
K_n \cap (\{x\} \times [0,1]) = \{x\} \times (I_x \cup J_{x,n})
\end{equation*}
where $(J_{x,n})_{n\geq k}$ is a non-increasing sequence of intervals such that $\cap_{n\geq k} J_{x,n} = \{R(x)\}$ with $R(x) \not\in I_x$. We prove this statement by induction on $k \geq 1$.

\medskip

If $k=1$ then $x = 1/2$, and for every $n > 1$, we have
\begin{equation*}
K_n \cap (\{1/2\} \times [0,1]) = T_0(K_{n-1} \cap (\{1\} \times [0,1])) \cup T_1(K_{n-1} \cap (\{0\} \times [0,1])),
\end{equation*}
so that, by recalling equation \eqref{TK},
\begin{eqnarray*}
K_n \cap (\{1/2\} \times [0,1]) 
&\!\!\!=\!\!\!& T_0(\{1\} \times t_1^{n-1}([0,1])) \cup T_1(\{0\} \times [0,1])\\
&\!\!\!=\!\!\!& \{1/2\}\times (t_0\circ t_1^{n-1}([0,1]) \cup [0,1/2]).
\end{eqnarray*}
Then, $J_{1/2,n} = (t_0\circ t_1^{n-1}([0,1]))_{n\geq 1}$ is a non-increasing sequence of intervals whose intersection is $t_0(R(1)) = R(1/2) = 2/3$. Moreover, $J_{1/2,n} \cap I_{1/2} = J_{1/2,n} \cap [0,1/2] = \emptyset$ for every $n$ large enough, and hence $R(1/2) \not\in I_{1/2}$.

\medskip

Consider now the case of a dyadic rational $x = 0.x_1 \dots x_k$ with $k > 1$.
Let $x' = 0.x_2 \dots x_k = 2x \;(\text{mod}\;1)$ and assume, by the inductive hypothesis, that $K_n \cap (\{x'\} \times [0,1]) = \{x'\} \times (I_{x'} \cup J_{x',n})$ for every $n \geq k-1$, with $(J_{x',n})_{n\geq k-1}$ a non-increasing sequence of intervals whose intersection is $\{R(x')\}$ and $R(x') \notin I_{x'} = t_{x_2}\!\circ\dots\circ t_{x_k}([0,1])$. Then, for every $n \geq k$ we have
\begin{equation*}
K_n \cap (\{x\} \times [0,1]) = T_{x_1}(K_{n-1} \cap (\{x'\} \times [0,1])),
\end{equation*}
and hence
\begin{equation*}
K_n \cap (\{x\} \times [0,1]) = \{x\} \times (t_{x_1}(I_{x'}) \cup t_{x_1}(J_{x',n-1})).
\end{equation*}
Here, $(J_{x,n} = t_{x_1}(J_{x',n-1}))_{n \geq k}$ is a non-increasing sequence of intervals whose intersection is $\{R(x)\} = \{t_{x_1}(R(x'))\}$, and $R(x) \notin I_x= t_{x_1}(I_{x'})$. 

\medskip

We are left to consider the case when $x \notin\D$. For every $n \geq 1$, let $x\trunc{n} = 0.x_1 \dots x_n$ the $n$-th trucantion of the binary expression of $x$. We want to prove, by induction on $n \geq 1$, that
\begin{equation*}
K_n \cap (\{x\} \times [0,1]) = \{x\} \times I_{x\trunc{n}}.
\end{equation*}
In fact, for $n=1$ we have $I_{x\trunc{1}} = [0,1]$ if $x_1=0$ and $I_{x\trunc{1}} = [0,1/2]$ if $x_1=1$. In both cases, $K_1 \cap (\{x\} \times [0,1]) = \{x\} \times I_{x\trunc{1}}$ and $R(x) \in I_{x\trunc{1}}$. Moreover, for $n > 1$ we have 
\begin{equation*}
I_{x_n} = \left\{\vrule height24pt width0pt\right.\!\!\!
\begin{array}{ll}
I_{x\trunc{n-1}} & \text{if $x_n = 0$}\\[2pt]
I_{x\trunc{n-1}}^+ & \text{if $x_n = 1$ and $n$ is even}\\[2pt]
I_{x\trunc{n-1}}^- & \text{if $x_n = 1$ and $n$ is odd}\end{array},
\end{equation*}
where $I_{x\trunc{n-1}}^+$ and $I_{x\trunc{n-1}}^-$ denote the upper half and the lower half of $I_{x\trunc{n-1}}$, respectively. In all the cases, the equality $K_n \cap (\{x\} \times [0,1]) = \{x\} \times I_{x\trunc{n}}$ and the relation $R(x) \in I_{x\trunc{n}}$ follow by induction on $n$, taking into account that 
\begin{equation*}
\begin{array}{ll}
0.x_1\dots x_{n-1} < x < 0.x_1\dots x_{n-1}1 & \text{if } x_n = 0\,,\\[3pt]
0.x_1\dots x_{n-1}1 < x < 0.x_1\dots x_{n-1}\per{1} & \text{if } x_n = 1\,.
\end{array}
\end{equation*}
Then,
\begin{equation*}
K_\infty \cap ({x} \times [0,1]) = \{x\} \times (\cap_{n\geq 1} I_{x\trunc{n}}) = \{(x,R(x))\}.
\end{equation*}
\vskip-\lastskip
\vskip-\baselineskip
\end{proof}
\vskip0pt

\begin{proof}[Proof of Proposition \ref{ClG}]
According to the lemma above, taking the union over $x\in [0,1)$ we immediately get the following equality
\begin{equation} \label{qinf}
K_\infty = G \cup(\cup_{x\in\D\setminus \{1\}} (\{x\} \times I_x)),
\end{equation}
with
\begin{equation*}
G \cap(\cup_{x\in\D\setminus \{1\}} (\{x\} \times I_x))\ = \{(0,2/3)\}.
\end{equation*}

\medskip

Equation \eqref{qinf} implies that $\Cl{G} \subset K_\infty$. Therefore, in order to conclude that $K_\infty = \Cl{G}$, it suffices to show that the inclusion $\{x\} \times I_x \subset \Cl{G}$ holds for every $x = 0.x_1 \dots x_n \in \D \setminus \{1\}$. According to equation \eqref{Ix}, for a generic point $(x,y)\in \{x\}\times I_{x}$ we have $y=0.x_1\dots x_n v$, with $v\in \{0,1\}^\infty$. Then, the point $(x,y)$ can be arbitrarily approximated by a point $(x',R(x'))\in G$ with $x' = 0.x_1 \dots x_n(00)^kw$, $k$ sufficiently large and $w\in \{0,1\}^\infty$ any binary word such that $\rho (w) = v$. This concludes the proof of the first part of the proposition.

\medskip

To prove the second part of the proposition, we start by observing that the argument just exposed immediately entails that
\begin{equation*}
I_x \subset \big[\mkern-1mu\liminf_{\xi\,\searrow\,x} R(\xi),\limsup_{\xi\,\searrow\,x} R(\xi)\mkern1mu\big]
\end{equation*}
The opposite inclusion readily follows by equation \eqref{Ix}. 
\end{proof}

\begin{corollary}\label{Darboux}
The map $R$ is not Darboux from the right $($in the sense of {\rm\cite{Csaszar}}$)$ at any dyadic rational $x \in \D \setminus \{1\}$  and its graph $G$ is totally disconnected.
\end{corollary}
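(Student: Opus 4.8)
The plan is to derive both assertions directly from Proposition~\ref{ClG} and the lemma preceding it, since the hard geometric work (computing $\Cl{G}$ and locating $R(x)$ relative to $I_x$) has already been done. For the failure of the Darboux property from the right at a dyadic $x\in\D\setminus\{1\}$: the Darboux-from-the-right condition in the sense of~\cite{Csaszar} asks, roughly, that for $y$ between $R(x)$ and any right limit value of $R$ there exist points $x'>x$ arbitrarily close to $x$ with $R(x')=y$ (equivalently, that $R$ takes, on every right-neighbourhood of $x$, all values between $R(x)$ and $\liminf_{\xi\searrow x}R(\xi)$, and likewise up to $\limsup$). I would first record, from Proposition~\ref{ClG}, that for $x\in\D\setminus\{0,1\}$ the interval of right cluster values is exactly $I_x=[\liminf_{\xi\searrow x}R(\xi),\limsup_{\xi\searrow x}R(\xi)]$, a nondegenerate interval, while $R(x)\notin I_x$; for $x=0$ one uses $R(0)=2/3$ together with the displayed example data $\liminf_{\xi\searrow 0}R(\xi)=0$ and $\limsup_{\xi\searrow 0}R(\xi)$ being positive (this also follows from $I_0=[0,1]$ via the lemma and the definition of $K_\infty$ near $0$). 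Since $R(x)$ lies strictly outside the closed interval $I_x$ of right-hand limit points, any value strictly between $R(x)$ and the nearer endpoint of $I_x$ is \emph{not} attained by $R$ on any sufficiently small right-neighbourhood of $x$ — indeed $R((x,x+\delta))$ accumulates only on $I_x$ and on $R$-values near $R(x)$, but by the oscillation estimate~\eqref{Ocyl}, on a small enough dyadic cylinder to the right of $x$ the values of $R$ are trapped in a tiny neighbourhood of $I_x$, missing a whole subinterval between $R(x)$ and $I_x$. This contradicts Darboux-from-the-right, giving the first claim.

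For total disconnectedness of $G$: I would argue that $G$ has no nondegenerate connected subset. Suppose $E\subset G$ is connected with more than one point; let $\pi:[0,1]^2\to[0,1]$ be the first-coordinate projection. Then $\pi(E)$ is a connected subset of $[0,1]$, hence an interval $J$, and since $G$ is a graph, $\pi\restr{G}$ is injective, so $E$ is the graph of $R\restr{J}$ and $\pi\restr{E}:E\to J$ is a continuous bijection. If $J$ is nondegenerate it contains a dyadic rational $x$ in its interior (and we may take $x\in\D\setminus\{0,1\}$). Now use the lemma: near such $x$, the vertical fibre $K_\infty\cap(\{x\}\times[0,1])$ is $\{x\}\times(\{R(x)\}\cup I_x)$ with $R(x)\notin I_x$, and more importantly $R$ is right-discontinuous at $x$ with right cluster set the nondegenerate interval $I_x$ disjoint from $R(x)$; hence $R\restr{J}$ is not continuous at $x$ from the right, so $E=\text{graph}(R\restr{J})$ is not homeomorphic via $\pi$ to $J$ — concretely, picking a sequence $x_j\searrow x$ in $J$ with $R(x_j)\to$ some point of $I_x$, the points $(x_j,R(x_j))\in E$ converge in $[0,1]^2$ to a point of $\{x\}\times I_x$ which is not $(x,R(x))$ and (being in $\{x\}\times I_x$ with that fibre structure) not in $E$ either unless $E$ also contains points of $\{x\}\times I_x$, which it cannot since $E\subset G$ and $G\cap(\{x\}\times[0,1])=\{(x,R(x))\}$ for $x\ne 0$. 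This shows $E$ is not closed, yet a connected subset of a graph over an interval that is the image of that interval under a section would have to be closed if the section were continuous — so continuity fails, and in fact one gets a clean separation: choose $c$ strictly between $R(x)$ and $I_x$; then $E\cap(\{x\le x\}\times\{y<c\})$ ... I would instead phrase the separation as follows, which is cleaner.

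Here is the cleaner separation argument for total disconnectedness, which I would actually write. Let $(p,q)\in G$ be two distinct points of a purported connected $E\subset G$; WLOG the first coordinates differ, say $\pi(p)<\pi(q)$, and pick a dyadic $x$ with $\pi(p)<x<\pi(q)$ and $x\in\D\setminus\{0,1\}$ (dyadics are dense). By Corollary's first part / Proposition~\ref{ClG}, $R(x)\notin I_x=[\liminf_{\xi\searrow x}R(\xi),\limsup_{\xi\searrow x}R(\xi)]$; choose a real $c$ strictly between $R(x)$ and the nearer endpoint of $I_x$, so that $c\ne R(x)$, $c\notin I_x$, and there is $\delta>0$ with $R(\xi)$ on the far side of $c$ from $R(x)$ for all $\xi\in(x,x+\delta)$ (using the oscillation bound~\eqref{Ocyl} on a small right dyadic cylinder at $x$ to push $R((x,x+\delta))$ into a neighbourhood of $I_x$ avoiding $c$). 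Then the two open half-planes $U^-=\{(u,v):u<x\}\cup\{(u,v):v\text{ on the }R(x)\text{ side of }c\}$ and $U^+=\{(u,v):u>x,\ v\text{ on the }I_x\text{ side of }c\}$ — chosen so their union covers $G$ near $x$ and they separate $(x,R(x))$ from the nearby graph points to its right — can be arranged into a genuine disconnection of $G$: more simply, since $x\notin\pi(G\cap(\{x\}\times\text{(nbhd of }I_x)))\setminus\{(x,R(x))\}$, the set $G$ is the union of its (relatively open) parts with first coordinate $<x$ together with $(x,R(x))$ and $v$ near $R(x)$, and its part with first coordinate $>x$, and these are separated by the line $\{u=x\}$ together with the line $\{v=c\}$ restricted appropriately. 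The main obstacle, and where I must be careful, is precisely making this separation rigorous: the jump lies on a vertical line, so one needs to combine a vertical separating line $\{u=x\}$ with a horizontal threshold $\{v=c\}$ and verify that $G$ meets neither the "bad corner" nor the separating set — this is exactly what Proposition~\ref{ClG} ($R(x)\notin I_x$, fibre of $K_\infty\supset\Cl G$ over $x$ is $\{R(x)\}\cup I_x$) and the oscillation estimate~\eqref{Ocyl} are designed to supply, so the argument goes through, but the bookkeeping of which side of $c$ is "the $R(x)$ side" versus "the $I_x$ side" must be handled uniformly for both possible positions of $R(x)$ relative to $I_x$.
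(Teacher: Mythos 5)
Your proposal follows the same route as the paper: the paper's entire proof of this corollary is the one-line remark that it is an immediate consequence of Proposition \ref{ClG}, and what you have written is an unpacking of that implication. For $x\in\D\setminus\{0,1\}$ your argument for the failure of the right Darboux property is correct: the right cluster set of $R$ at $x$ lies in $I_x=[\liminf_{\xi\searrow x}R(\xi),\limsup_{\xi\searrow x}R(\xi)]$ while $R(x)\notin I_x$, so any $c$ strictly between $R(x)$ and the nearer endpoint of $I_x$ is omitted by $R$ on a small right-neighbourhood of $x$ (you do not even need \eqref{Ocyl} here; it follows from the definition of $\liminf$ and $\limsup$). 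Two caveats. First, the case $x=0$, which the statement formally includes, is not handled by this mechanism, and your parenthetical about it proves nothing: $R(0)=2/3\in I_0=[0,1]$, and in fact by \eqref{Rcyl} together with the fact that $0$ is a limit point of every fiber (see the proof of Proposition \ref{fibercantor}) one has $R((0,\delta])=[0,1]$ for every $\delta>0$, so $R$ attains every value on every right-neighbourhood of $0$. The paper's one-line proof is equally silent on this point, so it is arguably a defect of the statement rather than of your argument, but the displayed $\liminf$/$\limsup$ data at $0$ does not yield the claim and you should not present it as if it did. Second, your separation argument for total disconnectedness is over-engineered and is left visibly unfinished (``restricted appropriately''); the horizontal threshold $c$ is unnecessary. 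Given two distinct points of $G$, their first coordinates $p_1<p_2$ differ because $G$ is a graph; pick a dyadic $x\in(p_1,p_2)$ with $x\in\D\setminus\{0,1\}$ and set $A=G\cap([0,x]\times[0,1])$ and $B=G\cap((x,1]\times[0,1])$. Then $A$ is relatively closed in $G$ for trivial reasons, and $B$ is relatively closed because its only candidate limit point outside $B$ is $(x,R(x))$, while a sequence $(u_n,R(u_n))$ with $u_n\searrow x$ can only accumulate on $\{x\}\times I_x$, which misses $(x,R(x))$ since $R(x)\notin I_x$. Hence $\{A,B\}$ is a relatively clopen partition of $G$ separating the two points, and $G$ is totally disconnected.
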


\begin{proof}
This is an immediate consequence of Proposition \ref{ClG}.
\end{proof}

Using the inclusion $G \subset K_\infty$, we can prove the next proposition.
\medskip

\begin{proposition}
\strut\kern8.5em
$\displaystyle\int_0^1 \!\! R(x)dx = \frac{3}{7}\,.$
\end{proposition}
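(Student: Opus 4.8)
The plan is to exploit the self-similar structure of $\Cl{G}=K_\infty$ rather than integrating $R$ directly. Since $R$ differs from the ``nice'' part of its graph only on a countable (hence null) set, and since $K_\infty=\cap_n K_n$ with each $K_n$ an explicit finite union of rectangles, I would compute $\int_0^1 R\,dx$ as the limit of the areas of a sequence of step functions whose subgraphs approximate $K_\infty$. Concretely, by Remark \ref{3/4} and the rectangle decomposition $K_n=\cup_{k}[k/2^n,(k+1)/2^n]\times I_{k/2^n}$, passing from $K_n$ to $K_{n+1}$ replaces each rectangle by three of its four congruent quarters, deleting the top-right quarter when $n$ is even and the bottom-right quarter when $n$ is odd. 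This gives a clean recursion for the total area $A_n=\mathrm{area}(K_n)$, but area alone is not enough: the deleted quarter is sometimes on top and sometimes on the bottom, so I must track not just the area of each rectangle but also the $y$-coordinate of its ``anchor'' (the corner of $I_{k/2^n}$ from which the interval was last halved).

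The cleaner route is to introduce, alongside $\int_0^1 R$, the ``complementary'' integral and to set up a $2\times 2$ linear recursion. Let me define $u=\int_0^1 R(x)\,dx$ and use the functional equations \eqref{functional1}--\eqref{functional2}: splitting $[0,1]=[0,1/2]\cup[1/2,1]$ and substituting $x\mapsto x/2$ and $x\mapsto (x+1)/2$ gives
\begin{equation*}
u=\int_0^1 R(x)\,dx=\int_0^{1/2}R+\int_{1/2}^1 R
=\tfrac12\int_0^1 R\!\left(\tfrac{t}{2}\right)dt+\tfrac12\int_0^1 R\!\left(\tfrac{t+1}{2}\right)dt
=\tfrac12\int_0^1\bigl(1-R(t)\bigr)dt+\tfrac12\int_0^1\tfrac{1-R(t)}{2}\,dt.
\end{equation*}
This collapses to $u=\tfrac12(1-u)+\tfrac14(1-u)=\tfrac34(1-u)$, whence $7u/4=3/4$ and $u=3/7$. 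One must be slightly careful: the functional equations \eqref{functional1}--\eqref{functional2} are asserted for $x\in(0,1]$, so the substitutions are valid off a set of measure zero, which does not affect the Lebesgue integral; and $R$ is genuinely integrable because it is a bounded Baire class $1$ function (Proposition \ref{Rcont}), in fact bounded and with only countably many discontinuities, hence Riemann integrable.

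The main obstacle — and it is a mild one — is justifying the interchange/substitution rigorously for this discontinuous map: one should note that $R$ is Borel measurable and bounded, that the change of variables $x=t/2$ (resp.\ $x=(t+1)/2$) is a linear bijection $[0,1]\to[0,1/2]$ (resp.\ $[0,1]\to[1/2,1]$), and that the identity \eqref{functional1}--\eqref{functional2} holds for all $t\in(0,1]$, i.e.\ a.e., so the integrals agree. Everything else is the one-line linear algebra above. An alternative, entirely equivalent, obstacle-free presentation computes $\int_0^1 R$ by the area-recursion on $K_n$ described in the first paragraph: one shows $\int_0^1 R(x)\,dx=\lim_n (\text{signed area quantity built from the }K_n)$ using $G\subset K_\infty$ together with Proposition \ref{ClG}, and the even/odd alternation in Remark \ref{3/4} produces exactly the coefficients $3/4$ and the geometric series $\sum (3/4)(-1/\cdots)$ summing to $3/7$; but the functional-equation argument is shorter and I would present that as the proof, perhaps remarking that it also explains ``geometrically'' why the answer is $3/7=\tfrac{3}{4}\cdot\tfrac{1}{1+3/4}$.
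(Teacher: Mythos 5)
Your proposal is correct, but it takes a genuinely different route from the paper. You derive the value from the functional equations \eqref{functional1}--\eqref{functional2}: splitting $[0,1]$ at $1/2$, changing variables linearly, and using the fact that the identities hold for all $x\in(0,1]$ (hence almost everywhere) yields the one-line relation $u=\tfrac12(1-u)+\tfrac14(1-u)$, so $u=3/7$; the only analytic input is that $R$ is bounded and Borel measurable, which Proposition \ref{Rcont} supplies. The paper instead argues geometrically: it approximates $\int_0^1 R$ by the integrals $A_n$ of step functions whose graphs are the bottom edges of the rectangles forming $K_n$, uses $\mathrm{Area}\,K_n=(3/4)^n\to 0$ to justify the approximation, observes $R_{2n+1}=R_{2n}$, and sums the resulting geometric series $A_{2n}-A_{2n-2}=\tfrac{3}{16}(\tfrac{9}{16})^{n-1}$ to get $3/7$. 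This is essentially the ``alternative'' you sketch in your first and last paragraphs, and the even/odd bookkeeping you flag (the deleted quarter alternating between top and bottom) is exactly what the paper handles via the identity $R_{2n+1}=R_{2n}$ and the two-step recursion. Your functional-equation argument is shorter and self-contained; the paper's computation has the side benefit of establishing the area formula $(3/4)^n$, which it immediately reuses to compute the box dimension of the graph in the next proposition. Both are valid proofs.
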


\begin{proof}
By Remark \ref{3/4}, $\text{Area}\,K_n = \frac{3}{4} \text{Area}\,K_{n-1}$ for every $n>0$.
Then, taking into account that $\text{Area}\,K_0 = 1$, we get
\begin{equation}\label{3/4eqn}
\text{Area}\,K_n = \Big(\,\frac{3}{4}\,\Big)^n
\end{equation}
for every $n \geq 0$. Since this vanishes for $n \to \infty$, we can approximate the integral of $R$ by the integral $A_n$ of the piecewise constant function $R_n$ whose graph is given by the bottom edges of all the rectangles forming $K_n$. Now, $R_{2n+1}=R_{2n}$ for every $n \geq 1$, so we can write
\begin{equation*}
\int_0^1\!\!R(x)dx = \lim_{n\to \infty}A_{2n}.
\end{equation*}
Looking at what happens inside each rectangle of $K_{2n-2}$ when passing from $R_{2n-2}$ to $R_{2n}$ (compare $K_0$ and $K_2$ in Figure \ref{rettangoli}), we have
\begin{equation*} 
A_{2n} - A_{2n-2} = \frac{3}{16}\, \text{Area}\,K_{2n-2} = \frac{3}{16} \cdot \Big(\frac{9}{16} \Big)^{n-1}
\end{equation*}
Therefore, starting from $A_0=0$ we get
\begin{equation*}
\lim_{n \to \infty}A_{2n} = \frac{3}{16} \,\sum_{n\,=\,0}^{\infty}\Big(\frac{9}{16}\Big)^n = \frac{3}{16} \cdot \frac{16}{7} = \frac{3}{7}.
\end{equation*}
\vskip-\lastskip
\vskip-1.7\baselineskip
\end{proof}
\vskip12pt

Finally, we can compute the Box dimension $\dim_B G$ and estimate the Hausdorff dimension $\dim_H G$ as follows.

\begin{proposition}
\strut\kern2.5em
$\displaystyle 1 \leq \dim_H G \leq \dim_B G = \dim_B K_\infty = \log_23.$
\end{proposition}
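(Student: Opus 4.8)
The plan is to reduce the whole statement to the combinatorial description of the sets $K_n$ and $K_\infty$ established above, and then to invoke three standard facts: box dimension is unchanged under taking closures, $\dim_H A\le\overline{\dim}_B A$ for every bounded $A$, and a Lipschitz map cannot increase Hausdorff dimension. Since $\Cl G=K_\infty$ by Proposition \ref{ClG}, the first of these gives $\dim_B G=\dim_B K_\infty$ as soon as the latter is shown to exist, so the core of the argument is the computation $\dim_B K_\infty=\log_2 3$.

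For the upper estimate I would count dyadic squares of side $2^{-n}$. By the decomposition $K_n=\bigcup_{k=0}^{2^n-1}[k/2^n,(k+1)/2^n]\times I_{k/2^n}$ together with \eqref{Ix}, the $k$-th rectangle has width $2^{-n}$ and height $2^{-p(k)}$, where $p(k)$ denotes the number of $1$'s in the $n$-digit binary expression of $k$ (equivalently $p(k)=|\rho(x_1\dots x_n)|$ if $k/2^n=0.x_1\dots x_n$); hence it meets exactly $2^{\,n-p(k)}$ squares of the grid of mesh $2^{-n}$. Summing over $k$ and using that $\sum_{k=0}^{2^n-1}2^{-p(k)}=\sum_{j=0}^{n}\binom{n}{j}2^{-j}=(1+\tfrac12)^n$, the total is $2^n(\tfrac32)^n=3^n$. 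Since $K_\infty\subset K_n$ for every $n$, this bounds the covering number of $K_\infty$ at scale $2^{-n}$ by $3^n$, and the standard passage to arbitrary scales gives $\overline{\dim}_B K_\infty\le\log_2 3$.

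For the matching lower estimate I would use the Lemma preceding Proposition \ref{ClG}: for each $k\in\{0,\dots,2^n-1\}$ the dyadic point $x=k/2^n$ lies in $\D\setminus\{1\}$, so $K_\infty$ contains the vertical segment $\{x\}\times I_x$, of length $2^{-p(k)}$. Two such segments for different $k$ are at horizontal distance an integer multiple of $2^{-n}$, so any set of diameter $\le 2^{-n}$ meets at most two of them (two consecutive ones); on the other hand, covering the segment at $x=k/2^n$ alone requires at least $2^{\,n-p(k)}$ such sets, since each contributes vertical extent at most $2^{-n}$. Counting each covering set against the at most two values of $k$ for which it is used, any cover of $K_\infty$ by sets of diameter $\le 2^{-n}$ must have cardinality at least $\tfrac12\sum_{k=0}^{2^n-1}2^{\,n-p(k)}=\tfrac12\,3^n$, whence $\underline{\dim}_B K_\infty\ge\log_2 3$. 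Therefore $\dim_B K_\infty=\log_2 3$, and, by closure-invariance of box dimension, $\dim_B G=\dim_B K_\infty=\log_2 3$.

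Finally, the two remaining inequalities are soft. That $\dim_H G\le\dim_B G$ is the general comparison between Hausdorff and box dimension for bounded sets, so $\dim_H G\le\log_2 3$. For $\dim_H G\ge 1$ I would observe that the coordinate projection $(x,y)\mapsto x$ is $1$-Lipschitz and carries $G$ onto all of $[0,1]$, so $1=\dim_H[0,1]\le\dim_H G$. The only genuinely delicate point is the lower box-dimension estimate — certifying that $K_\infty$ is ``thick'' at every dyadic scale rather than concentrated on a thin part — and the vertical-segment argument is precisely what supplies this; working with the dyadic grid instead of an arbitrary cover keeps the overcounting loss down to the harmless factor $2$. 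I would not expect this line of reasoning to decide the exact value of $\dim_H G$, which may well be strictly below $\log_2 3$: pinning it down would presumably require constructing a suitable mass distribution supported on $G$ itself, which lies beyond the present statement.
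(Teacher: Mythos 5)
Your proof is correct and follows essentially the same route as the paper's: the $3^n$ dyadic-square count for $K_n$, closure-invariance of box dimension combined with $\Cl{G}=K_\infty$, the general inequality $\dim_H\leq\dim_B$ for bounded sets, and the Lipschitz projection onto $[0,1]$ for the lower Hausdorff bound. The only difference is that you justify the lower box-counting estimate explicitly via the vertical segments $\{k/2^n\}\times I_{k/2^n}\subset K_\infty$ and a double-counting argument, a step the paper leaves implicit in its assertion that the dyadic squares needed to cover $K_\infty$ are exactly those contained in $K_n$.
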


\begin{proof}
The two inequalities derive from the projection of $G$ onto $[0,1]$ and from the general 
relation between the Hausdorff dimension and the box-dimension, respectively. The first equality comes from the invariance of the box-dimension under closure. The second equality follows from the fact that the boxes of the form $[k/2^n,(k+1)/2^n] \times [\ell/2^n,(\ell+1)/2^n]$ needed to cover $K_\infty$ are exactly the ones contained in $K_n$, which are $3^n$, as it is clear by equation \eqref{3/4eqn}.
\end{proof}


\section{Analytical properties}
\label{analysis}

Let us start with some global properties of the map $R$ with respect to the Lebesgue measure. 
We indicate by $\lambda(A)$ the standard Lebesgue measure of a measurable set $A\subset [0,1]$.

\medskip

In the following propositions we indicate by $\N \subset [0,1]$ the subset of 2-normal numbers, that is the numbers $x \in [0,1]$ which admit a binary expansion where all the binary sequences of any given length $k \geq 1$ occur with the same asymptotic relative frequency $1/2^k$. In particular, $\N$ does not contain any rational number, and hence $\beta(x) = \beta'(x)$ for every $x \in \N$. We recall that $\N$ has full measure $\lambda(\N)=1$, as proved in \cite{Borel}.

\medskip

For any infinite binary word $b \in \B$ and any finite binary word $w \in \{0,1\}^k$, we indicate by $f_n(w,b)$ the relative frequency of $w$ among all the $n$ subwords of $b\trunc{n+k-1}$ of length $k$, and by $f(w,b)$ the asymptotic relative frequency of $w$ in $b$, if it exists. Namely, we put
\begin{equation}\label{fn}
f_n(w,b) = \frac{|\{i=1,\dots,n \st w = b_ib_{i+1}\dots b_{i+k-1}\}|}{n}
\end{equation}
and
\begin{equation}\label{f}
f(w,b) = \lim_{n\to\infty} f_n(w,b).
\end{equation}

\medskip

Then, to say that $x = 0.x_1x_2\dots$ is normal means that $f(w,x_1x_2\dots) = 1/2^k$ for every $w \in \{0,1\}^k$.
This is equivalent to the property that for every $k \geq 1$, if $(w_1,w_2, \dots)$ is the decomposition of $x_1x_2\dots$ into contiguous blocks of length $k$, then the asymptotic relative frequency of any $w \in \{0,1\}^k$ in the sequence $(w_1,w_2, \dots)$ is $1/2^k$ (see \cite{Borel}, \cite{Bugeaud1}, \cite{Cassels}, \cite{Niven} and \cite{Pillai}).

\begin{proposition}\label{section}
The map $R$ is measurable but not bi-measurable. Moreover, the map $S$ defined by equation \eqref{S} is a null measure section of $R$, being $S([0,1]) \subset [0,1] \setminus \N$.
\end{proposition}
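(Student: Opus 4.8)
I want to prove three separate assertions: (i) $R$ is Lebesgue measurable; (ii) $R$ is not bi-measurable, i.e.\ there is a Lebesgue measurable (in fact Borel) set whose preimage under $R$ is not Lebesgue measurable; (iii) $S$ takes values in $[0,1]\setminus\N$, and consequently $S([0,1])$ is a null set, which together with $R\circ S=\mathrm{id}$ makes $S$ a null measure section of $R$.

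\medskip

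\emph{Measurability of $R$.} This is immediate from what has already been established: by Proposition~\ref{Rcont}, $R$ is of Baire class~1, hence Borel measurable, hence Lebesgue measurable. I would state this in one line.

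\medskip

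\emph{The section $S$ avoids normal numbers.} The heart of the matter is to show $S(y)\notin\N$ for every $y\in[0,1]$. By definition \eqref{S}, $S(y)=\xi(\sigma(\beta(y)))$ or $\xi(\sigma(\beta'(y)))$, so it suffices to show that for every $b\in\B$ the binary word $\sigma(b)$ is not the binary expansion of a normal number. Recall from Section~\ref{map} that $\sigma(b)\in\S$, i.e.\ $\sigma(b)$ contains no pair of consecutive $0$'s. But a word avoiding the factor $00$ has $f(00,\sigma(b))=0\neq 1/4$ whenever the limiting frequency exists; more to the point, $f_n(00,\sigma(b))=0$ for all $n$, so the asymptotic frequency of $00$ is $0$, not $1/2^2$. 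Hence $\xi(\sigma(b))$ fails the normality condition for blocks of length~$2$, so $\xi(\sigma(b))\notin\N$. (One must also note that since $\sigma(b)$ is not eventually constant when $b\neq\per{0},\per{1}$ — actually one should check $\sigma(b)$ lies in $\C$, which is stated after \eqref{sigma} — the value $\xi(\sigma(b))$ is genuinely the unique binary expansion used to test normality; but normality of a real number is independent of the choice of expansion for non-dyadic numbers, and $\xi(\sigma(b))$ is non-dyadic unless $\sigma(b)$ is eventually constant, a case one handles directly since $2/3=\xi(\per{10})$ or similar small-case values are manifestly non-normal.) Therefore $S([0,1])\subset[0,1]\setminus\N$, and since $\lambda(\N)=1$ by Borel's theorem \cite{Borel}, we get $\lambda(S([0,1]))=0$. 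That $S$ is a section, i.e.\ $R(S(y))=y$, follows from Proposition~\ref{fiber}: $S(y)$ is the $\xi$-image of the lexicographically maximal element of $(\rho\restr{\C})^{-1}(\beta(y))$ (or of $\beta'(y)$), which lies in $(\rho\restr{\C})^{-1}$, so $\rho(\sigma(\beta(y)))=\beta(y)$ and hence $R(S(y))=\xi(\beta(y))=y$ (with the dyadic and $y=2/3$ bookkeeping handled by \eqref{R-1bis}).

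\medskip

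\emph{Failure of bi-measurability.} Since $S$ is an injection (because $\xi\restr{\C}$ is injective and $\sigma$ is injective) of $[0,1]$ into the null set $N_0:=S([0,1])$, any non-Lebesgue-measurable subset $V\subset[0,1]$ — which exists — maps to a set $S(V)\subset N_0$, which is Lebesgue measurable (indeed null) by completeness of Lebesgue measure. But $R^{-1}(S(V))\supset V$ via the section, and more precisely $S^{-1}(R^{-1}(S(V)))=V$... I need to be a little careful: what I actually want is a measurable set whose $R$-preimage is non-measurable. Take $V$ non-measurable; then $S(V)$ is null hence measurable, while $R^{-1}(S(V))\cap S([0,1])=S(V)$ is fine — that's not it. Instead: consider $R\restr{N_0}:N_0\to[0,1]$, which is a \emph{bijection} (it is injective because $S$ is a section so $R\restr{N_0}$ is inverse to $S$, and surjective since $R\circ S=\mathrm{id}$); its inverse is $S$. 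Pick $V\subset[0,1]$ non-measurable. Then $S(V)=(R\restr{N_0})^{-1}(V)\subset N_0$ is null, hence Lebesgue measurable as a subset of $[0,1]$. Now $R^{-1}(S(V))$: any point $x$ with $R(x)\in S(V)$ — does this force $x\in S(V)$? No, fibers are huge. So $R^{-1}(S(V))$ need not be a subset of $N_0$. Hmm — so the clean statement is rather: $R$ maps the measurable (null) set $S(V)$ to the non-measurable set $R(S(V))=V$, exhibiting that $R$ does not send measurable sets to measurable sets, i.e.\ $R^{-1}$ is not measurable, which is exactly "not bi-measurable". I would phrase it as: the forward image under $R$ of the Lebesgue-null (hence measurable) set $S(V)$ equals $V$ (since $R\circ S=\mathrm{id}$ gives $R(S(V))=V$), which is non-measurable; equivalently, $(R\restr{N_0})^{-1}=S$ but the preimage of $V$ under the measurable map $R$ restricted appropriately... — cleanest: \emph{$R$ is not bi-measurable because it maps a certain Lebesgue measurable set, namely $S(V)$, onto a non-measurable set $V$.}

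\medskip

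\emph{Main obstacle.} The only real subtlety is the one flagged above in the bi-measurability argument: "bi-measurable" should be interpreted as "$R$ and its set-theoretic inverse (as a relation) both pull measurable sets back to measurable sets", equivalently "$R(A)$ is measurable for every measurable $A$", and I must produce a \emph{measurable} set with non-measurable image. Using the null section $N_0=S([0,1])$ and a non-measurable $V\subset[0,1]$, the set $A=S(V)$ is null hence measurable, and $R(A)=R(S(V))=V$ is non-measurable; this does it. A secondary, purely bookkeeping obstacle is making the normality argument airtight across the dyadic cases and the two alternatives $\beta,\beta'$ in \eqref{S}, and checking that $\xi(\sigma(b))$ is non-dyadic (so that testing normality on the single expansion $\sigma(b)$ is legitimate) — but this is routine given that $\sigma(b)\in\C$ and contains no $00$, so it is neither eventually $0$ nor eventually $1$ except in the trivial finite list of cases, all of which are visibly non-normal.
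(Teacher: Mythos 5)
Your proposal is correct and follows essentially the same route as the paper: measurability of $R$ from Proposition \ref{Rcont}, non-normality of $S(y)$ from the absence of the factor $00$ in $\sigma(\beta(y))$ or $\sigma(\beta'(y))$ (hence $\lambda(S([0,1]))=0$ by Borel's theorem), and failure of bi-measurability by exhibiting a non-measurable set $V$ (e.g.\ a Vitali set) as the image $R(S(V))$ of the null, hence measurable, set $S(V)$. The only difference is cosmetic: the paper derives measurability from almost-everywhere continuity rather than from the Baire class, and it states the image argument directly without the detour through injectivity of $S$ that you eventually discard.
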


\begin{proof}
The map $R$ is almost everywhere continuous by Proposition \ref{Rcont}, hence it is measurable.
Concerning the section $S$, for every $y \in [0,1]$ the image $S(y)$ does not contain any pair of consecutive $0$'s in its binary expansion $\beta(S(y))$, which coincides with either $\sigma(\beta(y))$ or $\sigma(\beta'(y))$. Then, we have $S([0,1]) \subset [0,1] \setminus \N$, which implies that $\lambda(S([0,1]))=0$ by \cite{Borel}. Therefore, every set $A \subset [0,1]$ is the image $R(S(A))$ of the null measure set $S(A)$. In particular, this is true for any non-measurable subset of $[0,1]$, like the Vitali set. Thus, $R$ is not bi-measurable.
\end{proof}

We remark that, in the light the uncountability of all the fibers of $R$ discussed in Section \ref{function}, part of Proposition \ref{section} could be also obtained as a direct consequence of a general theorem by Purves \cite{Purves} stating that, if $f$ is a bi-measurable map from a standard Borel space $X$ to a Polish space $Y$, then at most countably many fibers of $f$ can be uncountable.

\begin{proposition}\label{sing}
The map $R$ is a singular Borel function, being $R(\N) \subset [0,1] \setminus \N$.
\end{proposition}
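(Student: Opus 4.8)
\emph{The plan.} The content of the statement is the inclusion $R(\N)\subset[0,1]\setminus\N$: once this is known, $R$ is Borel by Proposition~\ref{Rcont}, and it carries the full-measure set $\N$ into the null set $[0,1]\setminus\N$, so it is singular (this parallels Proposition~\ref{section}). So fix $x\in\N$. Since normal numbers are irrational, $\beta(x)=\beta'(x)$ is the unique binary expansion $b\in\C$ of $x$, and $R(x)=\xi(\rho(b))$. The aim is to exhibit one length-$2$ word whose asymptotic frequency in the infinite binary word $\rho(b)$ is not $1/4$, so that $\rho(b)$ is not normal and $\xi(\rho(b))\notin\N$.

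\emph{Two elementary facts about $\rho$.} First, since $\rho$ deletes every $0$ and sends every $1$ to a single symbol, $|\rho(w)|=|w|_1$ for every finite word $w$; hence $|\rho(b\trunc{2n})|=|b\trunc{2n}|_1\sim n$, because the digit $1$ has frequency $1/2$ in the normal word $b$. Second, listing the surviving symbols in order, $\rho(b)_i=1$ if and only if the $i$-th occurrence of $1$ in $b$ lies at an even position; so, writing $p_1<p_2<\cdots$ for the positions of the $1$'s of $b$, one has $\rho(b)_i=\rho(b)_{i+1}$ exactly when the gap $p_{i+1}-p_i$ is even, i.e.\ exactly when $b$ reads $1\,0^{2k-1}\,1$ starting at position $p_i$ for some $k\ge1$. (Both facts are just the digit-wise description of $\rho$; equivalently one may use the block form $\tau$ of \eqref{tau} acting on the length-$2$ blocks of $b$.)

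\emph{The counting step} uses only the sliding-window frequencies of $b$ at all positions, so no information on the parity of the starting position is needed. Counting, among the first $\sim n$ digits of $\rho(b)$, the indices $i$ with $\rho(b)_i=\rho(b)_{i+1}$, the second fact rewrites this count as $\sum_{k\ge1}\#\{\,p\le2n:\ b_p b_{p+1}\cdots b_{p+2k}=1\,0^{2k-1}\,1\,\}$, up to $O(1)$ boundary terms. For each fixed $k$ this is asymptotic to $2n\cdot2^{-(2k+1)}$ by normality of $b$, and the interchange of $\lim_n$ with $\sum_k$ is justified by the tail bound (uniform in $n$) that the number of $1$'s in $b\trunc{2n}$ followed by a run of at least $2K-1$ zeros is $\le 2n\cdot2^{-2K}(1+o(1))$. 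Summing gives $\#\{i:\rho(b)_i=\rho(b)_{i+1}\}\sim 2n\sum_{k\ge1}2^{-(2k+1)}=\tfrac13 n$ inside $\rho(b\trunc{2n})$, whereas if $\rho(b)$ were normal this count would be $\sim(\tfrac14+\tfrac14)\,|\rho(b\trunc{2n})|\sim\tfrac12 n$; since $\tfrac13\neq\tfrac12$, $\rho(b)$ is not normal. Finally $\rho(b)$ is not eventually constant: by the block-wise characterization of normality recalled above, $b$ contains the blocks $01$ and $10$ at infinitely many even-aligned positions, so $\rho(b)$ contains each digit infinitely often; hence $\xi(\rho(b))$ is non-dyadic, its binary expansion is precisely $\rho(b)$, and $R(x)=\xi(\rho(b))\notin\N$, as wanted.

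\emph{Main obstacle.} Everything above is routine given the digit-wise (or block) description of $\rho$ and the definition of normality, except the interchange of the limit over $n$ with the infinite sum over the gap length $k$; this is the one point requiring care, and it is handled by the uniform-in-$n$ geometric tail estimate stated in the previous paragraph.
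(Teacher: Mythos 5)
Your proof is correct, but it reaches the non-normality of $R(x)$ by a genuinely different route than the paper. The paper first observes (via $\tau$ on length-2 blocks) that $R(x)$ is simply normal, and then works with the decomposition of $\beta(x)$ into contiguous blocks of length $8$: since these blocks are even-aligned, $\rho$ acts on them morphically, and a brute-force count over all $256$ words $w$ of length $8$ shows that the pairs $01,10$ occur $541$ times in the images $\rho(w)$ against $228$ occurrences of $00,11$; after discarding the at most $m$ boundary pairs and running a small $\delta$-perturbation argument, this yields the strict inequality $f(01)+f(10)>f(00)+f(11)$ for $R(x)$ (hedged by ``assuming both limits exist''). You instead compute the \emph{exact} asymptotic frequency of equal adjacent digits in $\rho(\beta(x))$: a pair $\rho(b)_i=\rho(b)_{i+1}$ corresponds to an even gap between consecutive $1$'s of $b$, and normality makes the gap $j$ occur with frequency $2^{-j}$, so the equal-pair frequency is $\sum_{k\ge1}2^{-2k}=1/3\neq1/2$. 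This is sharper (it identifies the limit rather than bounding it, and it is consistent with the paper's numbers, since $228/769\to1/3$ as the block length grows) and conceptually cleaner, at the price of the one delicate step you correctly isolate: the interchange of $\lim_n$ with the sum over gap lengths, which your uniform geometric tail bound on runs of zeros does justify. Your closing check that $\rho(\beta(x))$ contains both digits infinitely often, so that $\xi(\rho(\beta(x)))$ is non-dyadic and has $\rho(\beta(x))$ as its unique expansion, is a point the paper glosses over but is needed to pass from non-normality of the sequence to $R(x)\notin\N$.
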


\begin{proof}
It is enough to prove the last inclusion, since it implies that $\lambda(R^{-1}([0,1] \setminus \N)) = 1$ with $\lambda([0,1] \setminus \N) = 0$, and hence the singularity of $R$. So, given $x = 0.x_1x_2\ldots \in \N$ and $R(x) = y = 0. y_1y_2\ldots$, with $y_1y_2\ldots = \rho(x_1x_2\dots)$, we have to prove that $y$ is not normal. Notice that $x_1x_2\ldots = \beta(x) \in \C$ and $y_1y_2\ldots \in \B$ are both infinite binary words (the latter coincides with either $\beta(y)$ or $\beta'(y)$).
Here, we think of the normality of $x$ in terms of asymptotic equi-distribution of contiguous blocks in $x_1x_2\dots$ of every given length, as discussed above.

By considering the blocks of length 2 in the word $x_1x_2\dots$, and taking into account the substitution rule $\tau$ defined in \eqref{tau} and the asymptotic equi-distribution of $01$ and $10$, guaranteed by the normality of $x$, we can immediately conclude that $R(x)$ is simply normal, meaning that $0$ and $1$ have the same asymptotic relative frequency $1/2$ in the word $y_1y_2\dots$.

By arguing instead on the blocks of length 8 in the word $x_1x_2\dots$, we will prove that
\begin{equation}\label{pairs}
\lim_{n \to \infty} (f_n(01,y_1y_2\dots) + f_n(10,y_1y_2\dots)) > 
\lim_{n \to \infty} (f_n(00,y_1y_2\dots)+f_n(11,y_1y_2\dots)),
\end{equation}
assuming that both the limits exist, otherwise $y = R(x)$ cannot be normal and we are done.
This prevents the word $y_1y_2\dots$ to satisfy the asymptotic equi-distribution of the binary subwords of length 2, and hence implies once again that $y = R(x)$ is not normal.

To prove \eqref{pairs}, let $\overline n \geq 1$ be arbitrarily fixed. Since $f(0,x_1x_1\dots)=f(1,x_1x_2\dots) = 1/2$, for every $\delta > 0$ there exists $m \geq 1$ such that
the length $n$ of the word $\rho(x_1x_2\dots x_{8m})$ satisfies the following properties
\begin{equation*}
\frac{n}{8m} = \frac{|\rho(x_1x_2\dots x_{8m})|}{8m} = \frac{|x_1x_2\dots x_{8m}|_1}{8m} \in (1/2 - \delta, 1/2 + \delta) \ \ \text{and} \ \ n > \overline n
\end{equation*}
We want to show that for a suitable choice of $m$ the following inequality holds
\begin{equation}\label{pairsbis}
f_n(01,y_1y_2\dots) + f_n(10,y_1y_2\dots) > 
f_n(00,y_1y_2\dots)+f_n(11,y_1y_2\dots) + c,
\end{equation}
where $c > 0$ is a constant independent on $n$, which gives \eqref{pairs}.

Let $(w_1,w_2,\dots,w_m)$ be the decomposition of the word $x_1x_2\dots x_{8m}$ in blocks of length 8, and $(v_1,v_2,\dots,v_m)$ be the corresponding decomposition of the word $y_1y_2\dots y_n$ in blocks of variable length $\leq 8$, with $v_i = \rho(w_i)$ for every $i = 1, \dots, k$.

Based on the normality of $x$, we can assume $m$ sufficiently large in such a way that the relative frequency of any binary word $w$ of length 8 in the sequence $(w_1,w_2,\dots,w_m)$ belongs to the interval $(1/2^8 - \delta,1/2^8 + \delta)$. Then, we consider the pairs of consecutive digits in $y_1y_2\dots y_n$, separating those occurring inside the blocks from those formed by the last digit of a block and the first digit of the next one (disregarding the empty blocks).

The pairs of the latter type are less than $m$. Concerning the pairs of the former type, direct inspection shows that in the images of all binary words of length 8 under $\rho$, the pairs 01 and 10 occur 541 times, while the pairs 00 and 11 occur 228 times. Therefore, inside the blocks $(v_1, v_2, \dots, v_m)$ the number of occurrences of the pairs 01 and 10 is at least $541(1/2^8 - \delta)m$, while the number of occurrences of the pairs 00 and 11 at most $228(1/2^8 + \delta)m$. Putting all together we have
\begin{eqnarray}
\label{f1}
f_n(01,y_1y_2\dots) + f_n(10,y_1y_2\dots) &\geq& \frac{541(1/2^8 - \delta)m}{n}
\geq \frac{541(1/2^8 - \delta)}{8(1/2+\delta)}\,,\\
\label{f2}
f_n(00,y_1y_2\dots) + f_n(11,y_1y_2\dots) &\leq& \frac{228(1/2^8 + \delta)m + m}{n}
\leq \frac{228(1/2^8 + \delta) + 1}{8(1/2-\delta)}\,.
\end{eqnarray}
At this point, it is enough to observe that \eqref{f1} is greater than \eqref{f2} for $\delta$ sufficiently small.
\end{proof}

As a consequence of Proposition \ref{sing}, the map $R$ does not preserve the Lebesgue measure. The Lebesgue measure of the inverse images of dyadic cylinders can be directly computed as follows.

\medskip

According to equation \eqref{R-1bis}, for every $y \in \{0,1\}^n$ and $n \geq 1$, we have
\begin{equation}\label{R-1cyl}
R^{-1}(\xi(\cyl{y})) \setminus \{0\} = \left\{\vrule height15pt width0pt\right.\!\!\!
\begin{array}{ll}
\cup_{b \in \B}\xi(\fibexp{\sigma(yb)}) & \text{if $y = 1^n$}
\\[3pt]
\cup_{b \in \B}\xi(\fibexp{\sigma(yb)}) \cup \xi(\fibexp{\sigma(y')}) & \text{otherwise}\end{array},
\end{equation}
where $y' = \beta(\xi(y)+1/2^n)$.

\medskip

Then, by putting $|\sigma(y)| = m$ and recalling that $|\sigma(y)|_1 = |y| = n$, we get
\begin{equation*}
\begin{array}{rcl}
\lambda(\cup_{b \in \B}\xi(\fibexp{\sigma(yb)}))
& \!\!\!=\!\!\! & 
\lambda(\{\binexp{a}{\sigma(yb)}\st a \in \A, b \in \B\})
= \lambda(\{\binexp{a}{\sigma(y)}b\st a \in \mathbb N^n,b\in\B\})\\[4pt]
& \!\!\!=\!\!\! & 
\lambda(\cup_{a\in \mathbb N^n}\cyl{\mkern-1mu\binexp{a}{\sigma(y)}})
= \textstyle
\sum_{a \in \mathbb N^n}\lambda(\cyl{\mkern-1mu\binexp{a}{\sigma(y)}})
= \textstyle
\sum_{a \in \mathbb N^n}1/2^{|\binexp{a}{\mkern1mu\sigma(y)}|}\\[4pt]
& \!\!\!=\!\!\! & 
\textstyle
\sum_{a \in \mathbb N^n}1/2^{(2a_1+\ldots+2a_n+m)}
= \textstyle
\big(\sum_{a_1 \geq 0}1/2^{2a_1}\big) \cdots \big(\sum_{a_n \geq 0}1/2^{2a_n}\big)/2^m\\[4pt]
& \!\!\!=\!\!\! & 
\textstyle
(4/3)^n/2^m = 2^{2n-m}/3^n \leq (2/3)^n.
\end{array}
\end{equation*}
Moreover, by taking into account the inclusions
\begin{equation*}
\xi(\fibexp{\sigma(y')}) \subset \cup_{b \in \B}\xi(\fibexp{\sigma(y'\trunc{k}b)}),
\end{equation*} 
which hold for every $k \geq 0$, and by applying the above formula for the measure of the right-side terms, we obtain
\begin{equation*}
\lambda(\xi(\fibexp{\sigma(y')})) = 0.
\end{equation*}
Therefore, based on \eqref{R-1cyl}, we have
\begin{equation}\label{muR-1cyl}
\lambda(R^{-1}(\xi(\cyl{y}))) = 2^{2n-|\sigma(y)|}\!/3^n \leq (2/3)^n
\end{equation}
for every $y \in \{0,1\}^n$ and $n \geq 1$.

\medskip

We remark that, by generalizing the above computation of the Lebesgue measure of the inverse images of dyadic cylinders, one could show that actually the map $R$ cannot preserve any product measure $\mu_p$ with $0 < p < 1$, determined by $\mu_p(\xi(\cyl{x})) = p^m(1-p)^{n-m}$ where $m = |x|_1$, for every $x \in \{0,1\}^n$ and $n \geq 1$.

\medskip

Moreover, Proposition \ref{sing} also entails that no $R$-invariant probability measure can be absolutely continuous with respect to Lebesgue measure. Indeed, if there were such a measure $\nu$, then we would have $\nu ([0,1] \setminus \N) = 0$, and hence $\nu(R^{-1}([0,1] \setminus \N)) = 0$. Thus, the density function of $\nu$ would vanish almost everywhere on a set containing $\N$.

\medskip

Now we pass to the topological and metric structure of the fibers of $R$, which we recall to be all uncountable, as a consequence of equation \eqref{R-1bis} in Section \ref{function}. In particular, we will compute the Hausdorff dimension $\dim_H R^{-1}(y)$ of the fibers of the rational numbers $y \in \Q$ and estimate that of all the other fibers. Let us start with some preliminary results.

\begin{proposition}\label{fibercantor}
For every $y \in [0,1]$, the closure $\Cl{R^{-1}(y)}$ of the fiber $R^{-1}(y)$ is a null measure Cantor set in $[0,1]$, with a countable remainder $\Cl{R^{-1}(y)} \setminus R^{-1}(y) \subset \D$.
\end{proposition}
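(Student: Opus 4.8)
The plan is to analyze $\Cl{R^{-1}(y)}$ via the explicit description of $R^{-1}(y)$ given in equations \eqref{R-1bis} and \eqref{fibexp}. Recall that, up to adding the point $0$ in one special case, $R^{-1}(y)$ is a finite union (one or two pieces) of sets of the form $\xi(\fibexp{b})$ with $b \in \S \subset \C$, and that
\[
\fibexp{b} = \{\binexp{a}{b} \st a \in \A\},
\]
where $\binexp{a}{b}$ is obtained from $b$ by inserting a block $0^{2a_k}$ immediately before the $k$-th occurrence of $1$ in $b$. Since a finite union of Cantor sets which is itself nowhere dense and perfect is again a Cantor set, and since Hausdorff dimension (hence null measure) and the ``countable remainder in $\D$'' property are preserved under finite unions, it suffices to prove the statement for a single block $\Cl{\xi(\fibexp{b})}$ with $b \in \S$.

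First I would identify the closure $\Cl{\fibexp{b}}$ inside $\B$. A sequence $\binexp{a^{(j)}}{b}$ converges iff, for each fixed position, the inserted zero-blocks stabilize; the limit points obtained by letting some $a_k \to \infty$ are exactly the binary words of the form $\binexp{a'}{b'}$ where $b'$ is a \emph{finite} prefix of $b$ ending just before an occurrence of $1$, followed by $\per{0}$ — i.e.\ the eventually-zero sequences $\binexp{a'}{\,b\trunc{k}\,}\per{0}$ with $a' \in \mathbb N^{j}$ for the appropriate $j$. Passing through $\xi$ (which is continuous and 2-Lipschitz, and injective on $\C$) one gets
\[
\Cl{\xi(\fibexp{b})} = \xi(\fibexp{b}) \;\cup\; \{\text{dyadic rationals } \xi(\binexp{a'}{b\trunc{k}}\per{0})\},
\]
and the added set is a countable subset of $\D$. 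This establishes the ``countable remainder $\subset \D$'' claim and shows the remainder is disjoint from $\fibexp{b}$ itself (whose elements lie in $\C$, hence are non-dyadic or handled by the $\C$/$\C'$ bookkeeping). That the whole closure is perfect follows because every point of $\fibexp{b}$ is approached by other points of $\fibexp{b}$ (vary one $a_k$), and every dyadic remainder point is a limit of points of $\fibexp{b}$ with larger inserted blocks; total disconnectedness in $[0,1]$ follows once we know the set has measure zero (a null closed subset of $[0,1]$ has empty interior, and we then check it contains no interval — automatic from null measure).

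For the null-measure claim I would reuse the computation already carried out in the excerpt just before equation \eqref{muR-1cyl}: for $b = \sigma(y)$ and any $n$, covering $\xi(\fibexp{b})$ by the cylinders $\cyl{\binexp{a}{b\trunc{?}}}$ over $a \in \mathbb N^{n}$ gives total length $(4/3)^{n}/2^{m_n}$ where $m_n = |b\trunc{?}|$ is the length of the relevant prefix of $b$ containing $n$ ones. Since $b \in \S$ has no two consecutive zeros, $m_n \le 2n$, but more importantly $b$ is not eventually periodic-with-all-zeros, so $m_n - n = |\{\text{zeros among first }m_n\text{ digits of }b\}| \to \infty$; hence $(4/3)^n/2^{m_n} = (2/3)^n 2^{\,n - (m_n - n)} \cdot (\text{harmless})$ — more cleanly, $(4/3)^n / 2^{m_n} = 2^{2n - m_n}/3^n$ and $2n - m_n \le n$ with $2n-m_n \to -\infty$ is false in general, so the cleanest route is simply: $2^{2n-m_n}/3^n \le (2/3)^n \to 0$, exactly as displayed in \eqref{muR-1cyl}. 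Thus $\lambda(\xi(\fibexp{b})) = 0$, and adding a countable set keeps measure zero, giving $\lambda(\Cl{R^{-1}(y)}) = 0$.

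The main obstacle I anticipate is the bookkeeping around dyadic $y$ and around the point $0$: when $y \in \D \setminus \{0,1\}$ the fiber is a union of two blocks $\xi(\fibexp{\sigma(\beta(y))})$ and $\xi(\fibexp{\sigma(\beta'(y))})$, and one must check that their closures, and the interplay of the $\beta$ versus $\beta'$ expansions, do not accidentally produce an interval or a non-perfect point; similarly the isolated-looking point $0 \in R^{-1}(2/3)$ must be shown to be a limit of other fiber points (it is: $0 = \lim \xi(\binexp{a}{\sigma(\beta(2/3))})$ as the leading inserted block grows). Also one should double-check perfectness at the dyadic remainder points when $b$ itself is, say, $\per{1}$, i.e.\ $y = 1$: there $\fixexp{b}$-type endpoints could be isolated on one side, but two-sidedness inside $[0,1]$ is not required — being a limit point suffices — so a careful but routine case check closes this. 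None of these cases is conceptually hard; they are just finitely many configurations to verify against the formulas \eqref{R-1bis}–\eqref{fibexp}.
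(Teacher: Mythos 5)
Your proposal is correct and follows essentially the same route as the paper: identify the remainder of $\Cl{\xi(\fibexp{b})}$ as the countable set of dyadic limits arising when some inserted block $0^{2a_k}$ escapes to infinity, get null measure from the cylinder computation behind \eqref{muR-1cyl} (the bound $(2/3)^n\to 0$), and deduce the Cantor property from null measure (no intervals) plus perfectness by perturbing a single coordinate of $a$, with the point $0$ handled as a limit of fiber points with a growing leading block. The only cosmetic difference is that the paper works with $R^{-1}(y)=\cap_n R^{-1}(\xi(\cyl{\beta(y)\trunc{n}}))$ rather than covering a single $\xi(\fibexp{b})$ directly, and it phrases the escape-to-infinity argument as a bounded/unbounded dichotomy on the component sequences $((a_n)_i)$; neither changes the substance.
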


\begin{proof}
First we prove the inclusion
\begin{equation}\label{closure}
\Cl{\xi(\fibexp{b})} \subset \xi(\fibexp{b}) \cup \D
\end{equation}
for every $b \in \C$, which implies that $\Cl{R^{-1}(y)} \setminus R^{-1}(y) \subset \D$ for every $y \in [0,1]$ by \eqref{R-1bis}.

Given $x \in \Cl{\xi(\fibexp{b})}$, let $(x_n)_{n \geq 1} \subset \xi(\fibexp{b})$ be a sequence such that $\lim_{n \to \infty} x_n = x$. Then, by the definition of $\fibexp{b}$ in \eqref{fibexp}, there exists a sequence $(a_n)_{n \geq 1} \subset \A$ such that $x_n = \xi(\binexp{a_n}{b})$, and hence $\binexp{a_n}{b} = \beta(x_n)$, for every $n \geq 1$. If $x \in \D$ we are done, otherwise the continuity of $\beta$ at $x$ (see Section \ref{setting}) implies that $\lim_{n \to \infty}\binexp{a_n}{b} = \beta(x) \in \C$. If the sequence of the $i$-th components $((a_n)_i)_{n \geq 1}$ is bounded for every $i \geq 1$, then by compactness we can replace the sequence $(a_n)_{n \geq 1}$ by a subsequence converging to $a \in \A$, and recalling that $\eta$ is a homeomorphism, we have
\begin{equation*}
\lim_{n \to \infty}\binexp{a_n}{b} = \lim_{n \to \infty}\eta(2a_n + \eta^{-1}(b)) = \eta(\lim_{n \to \infty}2a_n + \eta^{-1}(b)) = \eta(2a + \eta^{-1}(b)) = \binexp{a}{b}\,,
\end{equation*}
and hence $x = \xi(\binexp{a}{b}) \in \xi(\fibexp{b})$. If instead some sequence $((a_n)_i)_{n \geq 1}$ is unbounded, let $m$ be the minimum index $i \geq 1$ for which this happens. Replacing the sequence $(a_n)_{n \geq 1}$ by a suitable subsequence such that $(a_n)\trunc{m-1} = c$ for some $c \in \{0,1\}^{m-1}$ and every $n \geq 1$, we have
\begin{equation*}
\beta(x) = \lim_{n \to \infty}\binexp{a_n}{b} = \binexp{c}{b\trunc{m-1}}\per{0},
\end{equation*}
which is absurd, being $\beta(x) \in \C$. So this case cannot occur, and we finished the proof of \eqref{closure}.

At this point, for any $y \in (0,1]$, taking into account that $\{y\} = \cap_{n \geq 1}\xi(\cyl{\beta(y)\trunc{n}})$, and hence $R^{-1}(y) = \cap_{n \geq 1} R^{-1}(\xi(\cyl{\beta(y)\trunc{n}}))$, we obtain $\lambda(\Cl{R^{-1}(y)}) = \lambda(R^{-1}(y)) = 0$ by \eqref{muR-1cyl}. Similarly, we can obtain $\lambda(\Cl{R^{-1}(0)}) = \lambda(R^{-1}(0)) = 0$ starting from $\{0\} = \cap_{n \geq 1}\xi(\cyl{0^n})$.

We are left to prove that $\Cl{R^{-1}(y)}$ is a Cantor set for every $y \in [0,1]$. To this end, it is enough to show that $\Cl{R^{-1}(y)}$ is topologically 0-dimensional and perfect (see \cite[Chapter Four, Section 45, Paragraph II]{Kura}). The 0-dimensionality immediately follows from the fact that $\Cl{R^{-1}(y)}$ cannot contain any interval, having a null measure. For the perfectness, we observe that every $x \in \Cl{R^{-1}(y)} \setminus R^{-1}(y)$ cannot be isolated. On the other hand, if $x \in R^{-1}(y) \setminus \{0\}$, then $x = \xi(\eta(c))$ for some $c \in \C$ (see \eqref{R-1bis}), and we have $x = \lim_{n \to \infty}x_n$ with the sequence $(x_n = \xi(\eta(c_n)))_{n \geq 1} \subset R^{-1}(y)$ defined by $(c_n)_i = c_i + 2\delta_{i,n}$ for any $i,n \geq 1$. Finally, $0$ is a limit point for every fiber $R^{-1}(y)$, and in particular for $y = 2/3$. In fact, $0 = \lim_{n \to \infty}x_n$ where $(x_n)_{n \geq 1} \subset R^{-1}(y)$ is defined like above, starting with any $c \in \C$ such that $\xi(\eta(c)) \in R^{-1}(y)$ and putting $(c_n)_i = c_i + 2n\delta_{i,1}$ for any $i,n \geq 1$.
\end{proof}

In order to compare different fibers of $R$, we introduce a partial order relation between infinite binary sequences $b,c \in \C$ as follows
\begin{equation}\label{prec}
b \succcurlyeq c \ \Leftrightarrow \ \text{there exists $m \geq 0$ such that $|c\trunc{i}|_1 \leq |b\trunc{i}|_1 + m$ \ for every $i \geq 1$}.
\end{equation}
We also introduce the induced relation equivalence
\begin{equation}\label{equiv}
b \approx c \ \Leftrightarrow \ \text{$b \succcurlyeq c$ \ and \ $c \succcurlyeq b$}\,.
\end{equation}
The above relations provide a kind of uniform control on the distribution of the 1's in the words. In particular, $b \succcurlyeq c$ and $b \approx c$ imply analogous relations $f(1,b) \geq f(1,c)$ and $f(1,b) = f(1,c)$ respectively, between the asymptotic relative frequencies of 1's, if they exist. But of of course the opposite implications are false.

\medskip

The next two lemmas provide our technical tool for the comparison of fibers. Here, we recall that $\S$ stands for the set of all infinite binary words without any pair of consecutive $0$'s.

\begin{lemma}\label{phibc}
For every $b,c \in \C$,
the natural bijection $\phi_{b,c}:\fibexp{b} \to \fibexp{c}$ defined by
\begin{equation*}
\phi_{b,c}(\binexp{a}{b}) = \binexp{a}{c\mkern1mu}
\end{equation*}
with $a \in \A$, is a homeomorphism. Moreover, $\phi_{b,c}$ is a Lipschitz map if $b \succcurlyeq c$ and $c \in \S$, and hence $\phi_{b,c}$ is a bi-Lipschitz map if $b \approx c$ and $b,c \in \S$.
\end{lemma}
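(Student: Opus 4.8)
The plan is to analyze the map $\phi_{b,c}$ through the coordinate description $\binexp{a}{b} = \eta(2a+\eta^{-1}(b))$ provided by \eqref{binexp}. First I would recall that $\eta$ is a $1$-Lipschitz homeomorphism (from the discussion around \eqref{etadef}), and that $\eta^{-1}$ records the gaps between consecutive $1$'s. Writing $b = \eta(\beta^\ast)$ and $c = \eta(\gamma^\ast)$ with $\beta^\ast,\gamma^\ast \in \A$, the map $\phi_{b,c}$ becomes, in the $\A$-coordinates, $2a+\beta^\ast \mapsto 2a+\gamma^\ast$, i.e.\ it changes the parity-pattern carried by the fixed sequences $\beta^\ast$ versus $\gamma^\ast$ while leaving the ``free'' part $a$ untouched. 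Since $\eta$ is a homeomorphism and this reparametrization of $\A$ is continuous with continuous inverse (it is continuous in each coordinate, and $\A$ carries the product topology), $\phi_{b,c}$ is automatically a homeomorphism between $\fibexp{b}$ and $\fibexp{c}$ as subspaces of $\B$. That disposes of the first assertion.

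For the Lipschitz claim, I would work directly with the metric $d(s,t) = 2^{-\min\{k\,:\,s_k\neq t_k\}}$ on $\B$. Fix $a,a' \in \A$ and let $j$ be the first index where the words $\binexp{a}{b}$ and $\binexp{a'}{b}$ differ; I need to show $\binexp{a}{c}$ and $\binexp{a'}{c}$ first differ at an index $j' \geq j - O(1)$, where the constant depends only on the offset $m$ in $b\succcurlyeq c$. The key observation is that if the two words $\binexp{a}{b}$ and $\binexp{a'}{b}$ agree on their first $j-1$ digits, then in particular $a$ and $a'$ agree up to some index $\ell$ where the $\ell$-th occurrence of $1$ has already appeared within the first $j-1$ positions (and, if they disagree at the $(\ell+1)$-st coordinate, that disagreement is ``not yet visible''). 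Counting: position $j-1$ in $\binexp{a}{b}$ contains at least $|b\trunc{\cdot}|_1$-many $1$'s up to there; since $c\in\S$ has no two consecutive $0$'s, the number of $1$'s among the first $N$ digits of any $\binexp{a''}{c}$ is at least $\lceil N/2\rceil$, so to ``hide'' the same number $\ell$ of agreeing blocks inside $\binexp{a}{c}$ one needs at most $2\ell + O(1)$ digits. Combined with $b\succcurlyeq c$, which says $|c\trunc{i}|_1 \le |b\trunc{i}|_1 + m$ and hence controls how far into $b$ one must read to see $\ell$ ones in terms of how far one reads into $c$, this yields $j' \ge \tfrac{1}{2}(j - \text{const})$ or thereabouts, i.e.\ $d(\binexp{a}{c},\binexp{a'}{c}) \le C\, d(\binexp{a}{b},\binexp{a'}{b})$ with $C = 2^{O(m)}$. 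Passing through the $2$-Lipschitz map $\xi$ and its local inverses $\beta,\beta'$ gives the Lipschitz bound for $\phi_{b,c}$ as a self-map of $[0,1]$.

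The bi-Lipschitz statement for $b\approx c$, $b,c\in\S$, then follows formally: $b\approx c$ means $b\succcurlyeq c$ and $c\succcurlyeq b$, so applying the one-sided result in both directions — using that $\phi_{c,b} = \phi_{b,c}^{-1}$ — shows both $\phi_{b,c}$ and its inverse are Lipschitz. The main obstacle I anticipate is the bookkeeping in the one-sided Lipschitz estimate: one has to translate ``agreement of two words in $\fibexp{b}$ up to digit $j$'' into ``agreement of the underlying $\A$-parameters up to block $\ell$'' and then back into ``agreement of the corresponding words in $\fibexp{c}$ up to digit $j'$'', and the conversion factors between digit-indices and block-indices differ by a factor of roughly $2$ (because $\S$-words are roughly half $1$'s) together with an additive error governed by $m$. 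Getting these inequalities to chain correctly — in particular making sure the ``invisible'' first disagreement in $a$ versus $a'$ stays invisible after substituting $c$ for $b$ — is where care is needed; everything else is routine manipulation of the definitions \eqref{binexp}, \eqref{prec} and the metric.
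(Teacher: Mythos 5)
Your first part is essentially the paper's argument and is fine: in the $\A$-coordinates provided by $\eta$, the map $\phi_{b,c}$ is the identity on the parameter $a$, conjugated by the isometric shifts $a \mapsto 2a+\eta^{-1}(b)$, $a\mapsto 2a+\eta^{-1}(c)$ and by the homeomorphism $\eta$; the paper phrases this as $\phi_{b,c}=\phi_c\circ\phi_b^{-1}$ with each $\phi_b$ a $1$-Lipschitz homeomorphism.

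The Lipschitz part, however, has a genuine gap. Your counting rests on the claim that the number of $1$'s among the first $N$ digits of $\binexp{a''}{c}$ is at least $\lceil N/2\rceil$ because $c\in\S$; this is false, since $\binexp{a''}{c}$ contains the inserted blocks $0^{2a''_k}$, which can be arbitrarily long --- only $c$ itself has that density of $1$'s. Moreover, even granting your conversion factors, the estimate you reach, $j'\ge\tfrac12(j-\mathrm{const})$, translates into $d(\binexp{a}{c},\binexp{a'}{c})\le C\, d(\binexp{a}{b},\binexp{a'}{b})^{1/2}$, a H\"older-$\tfrac12$ bound, not the Lipschitz bound you then assert; what is needed is an \emph{additive} loss $j'\ge j-O(m)$. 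The mechanism producing the additive loss, which your digit-count bookkeeping obscures, is a cancellation: if $a\trunc{n}=a'\trunc{n}$ and $a_{n+1}\ne a'_{n+1}$, the position of first disagreement of $\binexp{a}{b}$ and $\binexp{a'}{b}$ is $2h(a,a')+k^b_n+1$, where $h(a,a')=a_1+\dots+a_n+\min\{a_{n+1},a'_{n+1}\}$ and $k^b_n$ is the position of the $n$-th $1$ in $b$. The (unbounded) term $2h(a,a')$ contributed by the inserted zeros is the \emph{same} for $b$ and for $c$, so the two disagreement positions differ exactly by $k^b_n-k^c_n$, and the hypotheses enter only to bound this quantity: $b\succcurlyeq c$ gives $|c\trunc{k^b_n}|_1\le n+m$, hence $k^b_n<k^c_{n+m+1}$, while $c\in\S$ gives $k^c_{n+m+1}\le k^c_n+2(m+1)$, so $k^b_n-k^c_n\le 2m+1$ and the Lipschitz constant is $2^{2m+1}$. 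Finally, the passage through $\xi$ and $\beta$ at the end of your argument does not belong here: it is the content of Lemma \ref{psibc} (where the relevant hypothesis is $b\in\S$ rather than $c\in\S$), whereas the present lemma concerns $\phi_{b,c}$ only as a map between subsets of $\B$ with the sequence metric.
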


\begin{proof}
Given any $b \in \C$, we consider the natural bijection $\phi_b:\A \to \fibexp{b}$ defined by $\phi_b(a) = \binexp{a}{b}= \eta(2a + \eta^{-1}(b))$. This is a 1-Lipschitz homeomorphism, being the composition of the isometric map $\A \to \A$ which sends $a$ to $2a+\eta^{-1}b$ with the 1-Lipschitz homeomorphism $\eta: \A \to \C$. Moreover, for every $a,a' \in \A$ such that $a\trunc{n} = a'\trunc{n}$ and $a_{n+1}\neq a'_{n+1}$ we put
\begin{equation*}
h(a,a') = a_1 + \ldots + a_n + \min\{a_{n+1},a'_{n+1}\}\,,
\end{equation*}
and observe that the maximum index $i$ for which $\binexp{a}{b}\trunc{i} = \binexp{a'}{b}\trunc{i}$ is given by $2h(a,a') + k^b_n$, where $k^b_n$ denotes the index of the $n$-th occurrence of 1 in $b$.

Then, for every $b,c \in \C$ the identity $\phi_{b,c} = \phi_c \circ \phi_b^{-1}$ implies that 
$\phi_{b,c}$ is a homeomorphism.
Furthermore, if $b \succcurlyeq c$ and $c \in \S$ we can see that $\phi_{b,c}$ is a Lipschitz map as follows.

Let $m \geq 1$ such that $|c\trunc{i}|_1 \leq |b\trunc{i}|_1 + m$ for every $i \geq 1$. In particular, we have
\begin{equation*}
|c\trunc{k^b_i}|_1 \leq |b\trunc{k^b_i}|_1 + m = i + m,
\end{equation*}
and therefore
\begin{equation}\label{kbi}
k^b_i + 1 \leq k^c_{i+m+1} \leq k^c_i + 2m + 2
\end{equation}
for every $i \geq 1$, where $k^b_i$ denotes the index of the $i$-th occurrence of 1 in $b$ as above, and the last inequality derives from the fact that $c \in \S$ does not contain any pair of consecutive 0's. 

Given any two different sequences $a,a' \in \A$, let $n$ be the maximum index $i$ such that $a\trunc{i} = a'\trunc{i}$. Then, by the above observation, we have
\begin{equation*}
d(\binexp{a}{b},\binexp{a'}{b}) = 1/2^{2 h(a,a') + k^b_n+1}
\ \ \ \text{and} \ \ \
d(\binexp{a}{c},\binexp{a'}{c}) = 1/2^{2 h(a,a') + k^c_n+1},
\end{equation*}
from which by \eqref{kbi} we obtain
\begin{equation*}
\frac{d(\phi_{b,c}(\binexp{a}{b}),\phi_{b,c}(\binexp{a'}{b}))}
{d(\binexp{a}{b},\binexp{a'}{b})} \leq 2^{2m+1}.
\end{equation*}
So, we can conclude that $\phi_{b,c}$ is a Lipschitz map with Lipschitz constant $2^{2m+1}$.
\end{proof}

\begin{lemma}\label{psibc}
For every $b,c \in \C$, the bijection $\psi_{b,c} = \xi \circ \phi_{b,c} \circ \beta\restr{\xi(\fibexp{b})}:\xi(\fibexp{b}) \to \xi(\fibexp{c})$ given by\break
\vskip-21pt
\begin{equation*}
\psi_{b,c}(\xi(\binexp{a}{b}) = \xi(\binexp{a}{c})
\end{equation*}
\vskip3pt
with $a \in \A$, is a Lipschitz map if $b \succcurlyeq c$ and $b \in \S$, and hence $\psi_{b,c}$ 
is a bi-Lipschitz homeo\-morphism if $b \approx c$ and $b,c \in \S$.
\end{lemma}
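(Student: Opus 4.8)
The plan is to exhibit $\psi_{b,c}$ as the composition of bijections $\xi\circ\phi_{b,c}\circ\beta\restr{\xi(\fibexp b)}$ and to bound its Lipschitz constant factor by factor. Here $\xi$ is $2$-Lipschitz by \eqref{xidef} and $\phi_{b,c}$ is a homeomorphism by Lemma~\ref{phibc}, so $\psi_{b,c}$ is certainly a bijection; the delicate point is that, under the present hypotheses ($b\in\S$, not $c\in\S$), $\phi_{b,c}$ itself need not be Lipschitz as a map between the metric spaces $\fibexp b,\fibexp c$, yet the composition is. I would obtain this from two quantitative estimates, the first of which needs no ordering hypothesis at all.

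\emph{Estimate 1: $\beta\restr{\xi(\fibexp b)}$ is $2$-Lipschitz for every $b\in\C$}, equivalently $\xi\restr{\fibexp b}$ expands distances by at least $1/2$. Given distinct $a,a'\in\A$, let $n\ge0$ be the largest index with $a\trunc n=a'\trunc n$ and assume, without loss of generality, $a_{n+1}<a'_{n+1}$. By \eqref{etadef} and \eqref{binexp}, $\binexp a b$ and $\binexp{a'}b$ share their first $n$ maximal $0$-runs together with the $1$'s following them, and then $\binexp{a'}b$ has a $(n+1)$-th $0$-run which is longer than that of $\binexp a b$ by the even number $2(a'_{n+1}-a_{n+1})\ge 2$. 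Hence, writing $j$ for the position of the $(n+1)$-th occurrence of $1$ in $\binexp a b$, the two words agree up to position $j-1$, at position $j$ the first word has a $1$ and the second a $0$, and (as the longer $0$-run of $\binexp{a'}b$ still covers position $j+1$) the second also has a $0$ at position $j+1$. Thus $d(\binexp a b,\binexp{a'}b)=1/2^{j}$ while $\xi(\binexp a b)-\xi(\binexp{a'}b)\ge 1/2^{j}-\sum_{i\ge j+2}1/2^{i}=1/2^{j+1}$, i.e.\ $d(\binexp a b,\binexp{a'}b)\le 2\,|\xi(\binexp a b)-\xi(\binexp{a'}b)|$.

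\emph{Estimate 2: if $b\succcurlyeq c$ and $b\in\S$, the divergence position shifts by at most $2m$.} With $a,a',n$ as above, \eqref{etadef} and \eqref{binexp} give that the position of the $(n+1)$-th $1$ of $\binexp a b$ is $k^b_{n+1}+2(a_1+\dots+a_{n+1})$, where $k^b_i$ denotes the index of the $i$-th $1$ in $b$; hence the divergence position of $\binexp a b,\binexp{a'}b$ is $j_b=k^b_{n+1}+2(a_1+\dots+a_{n+1})$ and likewise $j_c=k^c_{n+1}+2(a_1+\dots+a_{n+1})$, so $j_b-j_c=k^b_{n+1}-k^c_{n+1}$. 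Fix $m\ge 1$ with $|c\trunc i|_1\le|b\trunc i|_1+m$ for all $i$ (see \eqref{prec}). Evaluating at $i=k^c_{n+1}$ yields $|b\trunc{k^c_{n+1}}|_1\ge n+1-m$. If $n+1\le m$ then $k^b_{n+1}\le 2(n+1)\le 2m$ because $b\in\S$ has no two consecutive $0$'s; otherwise the $(n{+}1{-}m)$-th $1$ of $b$ occurs no later than position $k^c_{n+1}$, and, since $b\in\S$ forces consecutive $1$'s at distance $\le2$, $k^b_{n+1}\le k^b_{n+1-m}+2m\le k^c_{n+1}+2m$. In either case $j_b-j_c=k^b_{n+1}-k^c_{n+1}\le 2m$.

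Combining, $\psi_{b,c}$ is a bijection (composition of the three bijections above), and for distinct $u=\xi(\binexp a b)$, $u'=\xi(\binexp{a'}b)$ in $\xi(\fibexp b)$,
\[
|\psi_{b,c}(u)-\psi_{b,c}(u')|=|\xi(\binexp a c)-\xi(\binexp{a'}c)|\le \frac{2}{2^{j_c}}=2^{\,j_b-j_c}\cdot\frac{2}{2^{j_b}}\le 2^{2m}\cdot 4\,|u-u'|=2^{2m+2}\,|u-u'|,
\]
where the first inequality is the $2$-Lipschitz bound for $\xi$, the next uses Estimate~2 ($2^{j_b-j_c}\le 2^{2m}$), and the last uses Estimate~1 ($1/2^{j_b}=d(\binexp a b,\binexp{a'}b)\le 2|u-u'|$). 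So $\psi_{b,c}$ is Lipschitz. If moreover $b\approx c$ with $b,c\in\S$, then $b\succcurlyeq c$ and $c\succcurlyeq b$, so the same applies to $\psi_{c,b}$; since clearly $\psi_{c,b}=\psi_{b,c}^{-1}$, the map $\psi_{b,c}$ is a Lipschitz bijection with Lipschitz inverse, hence a bi-Lipschitz homeomorphism. I expect the main obstacle to be Estimate~1 — specifically the observation that at the divergence position the ``larger'' word carries two consecutive $0$'s, which is exactly what upgrades the trivial $\xi(\binexp a b)-\xi(\binexp{a'}b)\ge 1/2^{j}-\sum_{i>j}1/2^{i}=0$ to the strict $\ge 1/2^{j+1}$; the remaining block-length bookkeeping (made uniform in Estimate~2 by $b\in\S$) and the arithmetic of exponents are routine.
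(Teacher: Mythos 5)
Your proof is correct and follows the same decomposition as the paper: factor $\psi_{b,c}$ as $\xi \circ \phi_{b,c} \circ \beta\restr{\xi(\fibexp{b})}$, use that $\xi$ is $2$-Lipschitz, and prove that $\beta\restr{\xi(\fibexp{b})}$ is $2$-Lipschitz via exactly the paper's key observation that at the divergence position one of $\binexp{a}{b},\binexp{a'}{b}$ continues with $1$ and the other with $00$ (your Estimate~1, which you rightly note needs no hypothesis on $b$ at all). The one substantive difference is your Estimate~2: the paper handles the middle factor by simply invoking Lemma~\ref{phibc}, but that lemma's Lipschitz clause is stated under the hypothesis $c \in \S$, whereas the present lemma assumes $b \in \S$; you instead re-derive the bound $k^b_{n+1} - k^c_{n+1} \leq 2m$ directly from $b \succcurlyeq c$ and $b \in \S$, which is the parallel argument the paper's citation silently requires, so your version is actually the more complete one. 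One small inaccuracy in your framing: your own Estimate~2 shows that $\phi_{b,c}$ \emph{is} Lipschitz (with constant $2^{2m}$ in the sequence metric) under these hypotheses, contrary to your opening remark that it ``need not be''; this does not affect the validity of the argument.
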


\begin{proof}
In the light of the previous lemma, recalling that $\xi$ is 2-Lipschitz, it suffices to show that $\beta\restr{\xi(\fibexp{b})}$ is a Lipschitz map for every $b \in \S$.

Take any two different real numbers $x=\xi(\binexp{a}{b})$ and $x'=\xi(\binexp{a'}{b})$ in $\xi(\fibexp{b})$, with $a\trunc{n} = a'\trunc{n}$ and $a_{n+1} \neq a'_{n+1}$. Without loss of generality, we assume that $a_{n+1} < a'_{n+1}$. Like in the proof of the previous lemma, we have
$\binexp{a}{b}\trunc{2h(a,a')+k^b_n} = \binexp{a'}{b}\trunc{2h(a,a')+k^b_n}$. Denoting by $w$ this word, we can write $\binexp{a}{b} = w1c$ and $\binexp{a'}{b} = w00c'$, and hence $x = 0.w1c$ and $x'=0.w00c'$, for suitable $c,c' \in \C$.
Then, we get
\begin{equation*}
|x - x'| = 0.x1c - 0.x00c' \geq 0.x1 - 0.x01 = 0.0^{|w|+1}1 = 1/2^{|w|+2}
\end{equation*}
while
\begin{equation*}
d(\beta(x),\beta(x')) = d(\binexp{a}{b},\binexp{a'}{b}) = 1/2^{|w|+1}.
\end{equation*}
These inequalities immediately imply that $\beta\restr{\xi(\fibexp{b})}$ is a 2-Lipschitz map.
\end{proof}

Now we focus on the ``rational fibers'', that is the fibers $R^{-1}(y)$ with $y \in \Q$.

\begin{proposition}\label{ratfib}
If $y \in \Q$ then $\xi(\sigma(\beta(y))),\xi(\sigma(\beta'(y))) \in \Q$ when defined, and so $S(y) \in \Q$.
\end{proposition}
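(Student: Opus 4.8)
The plan is to show that the binary expansion $\beta(y)$ (resp.\ $\beta'(y)$) of a rational $y$ is eventually periodic, and that the substitution $\sigma$ sends eventually periodic words to eventually periodic words; since a real number is rational precisely when its binary expansion is eventually periodic, this will give $\xi(\sigma(\beta(y))) \in \Q$ and $\xi(\sigma(\beta'(y))) \in \Q$, hence $S(y) \in \Q$ by \eqref{S}.

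First I would recall the classical fact that $y \in \Q$ if and only if $\beta(y)$ (and $\beta'(y)$) is eventually periodic, say of the form $u\per{p}$ with $u,p$ finite words, $|p| \geq 1$. So it suffices to prove: \emph{if $b \in \B$ is eventually periodic, then $\sigma(b) \in \C$ is eventually periodic.} Here I would use the description of $\sigma$ given just before Proposition~\ref{fiber}: $\sigma(b)$ is obtained by a genuine (non-erasing, morphic) substitution applied to the sequence of first differences of the word $1b$. The key point is that passing to first differences is a ``sliding-window'' operation depending only on pairs of consecutive letters, so it maps an eventually periodic word $1b = 1u\per{p}$ to an eventually periodic word, with a preperiod of length at most $|1u|$ and a period dividing $|p|$ (after possibly doubling $p$ to align the window across the seam, i.e.\ replacing $\per{p}$ by $\per{pp}$ so that the difference pattern on the periodic tail is itself periodic). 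Then applying the morphic rule $x \mapsto 1$ or $x \mapsto 01$ letterwise preserves eventual periodicity, because a non-erasing morphism sends $v\per{q}$ to $\mu(v)\per{\mu(q)}$ where $\mu$ denotes the extension of the rule to words. Hence $\sigma(b)$ is eventually periodic, so $\xi(\sigma(b)) \in \Q$.

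To finish, I would apply this with $b = \beta(y)$ and $b = \beta'(y)$, which are defined and eventually periodic whenever $y \in \Q$ (both are defined for $y \in \Q \setminus \{0,1\}$, while only $\beta$ is used at $y=1$ and only $\beta'$ at $y=0$, matching the cases in \eqref{S}). Thus each of $\xi(\sigma(\beta(y)))$ and $\xi(\sigma(\beta'(y)))$ lies in $\Q$ when defined, and since $S(y)$ equals one of these two values (or their maximum) in every case of \eqref{S}, we get $S(y) \in \Q$.

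The main obstacle is the bookkeeping around the ``seam'' of the periodic part: when we take first differences of $1u\per{p}$, a difference is taken between the last letter of one copy of $p$ and the first letter of the next, so one must check that the resulting difference sequence is genuinely eventually periodic (with period $|p|$, or $|p|$ doubled if needed) rather than merely ``locally periodic''. This is routine — replacing $p$ by $pp$ if necessary makes the window-of-width-$2$ computation periodic on the tail — but it is the one place where a careless argument could go wrong. Everything else (morphic images of eventually periodic words are eventually periodic; eventually periodic binary expansions correspond exactly to rationals) is standard.
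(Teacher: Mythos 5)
Your argument is correct and follows essentially the same route as the paper: rationals have eventually periodic binary expansions $w\per{p}$, the first-difference operation and the non-erasing rule underlying $\sigma$ both preserve eventual periodicity (the only care needed being at the seam, which the paper handles by absorbing one copy of $p$ into the preperiod when $w_n \neq p_\ell$ rather than by doubling $p$, which is in fact unnecessary since the difference pattern on the tail already has period $|p|$), and $\xi$ of an eventually periodic word is rational. The paper's proof differs only in that it records the resulting preperiod and period explicitly in \eqref{ratfibeq}, since these are needed later for the Hausdorff dimension computation in Proposition \ref{dimratfib}.
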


\begin{proof}
Since any binary expansion of a rational number is periodic, both $\beta(y)$ and $\beta'(y)$ have the form $w\per{p}$, where $w$ and $p$ are finite binary of length $n \geq 0$ and $\ell \geq 1$, respectively. Then, recalling that $\sigma$ can be interpreted as a true substitution rule applied to the first differences, $\sigma(\beta(y))$ and $\sigma(\beta'(y))$ have both the corresponding form $v\per{q}$, with
\begin{equation}\label{ratfibeq}
v = \left\{\vrule height14pt width0pt\right.\!\!\!
\begin{array}{ll}
\sigma(w) & \text{if $w_n = p_\ell$}\\
\sigma(wp) & \text{if $w_n \neq p_\ell$}
\end{array}
\ \ \ \text{and} \ \ \ \
q = \left\{\vrule height14pt width0pt\right.\!\!\!
\begin{array}{ll}
\sigma(p) & \text{if $p_\ell = 1$}\\
\sigma'(p) & \text{if $p_\ell = 0$}
\end{array},
\end{equation}
where we assume $w_n = 1$ for $n = 0$ and indicate by $\sigma'(p)$ the word $\sigma(p)$ with the first digit complemented. In all cases, we have a periodic binary sequence with a period $q$ of length $n$, and by applying $\xi$ we get rational number in $\Q$.
\end{proof}

The next proposition expresses the Hausdorff dimension $\dim_H R^{-1}(y)$ for $y \in \Q$ in terms of the density $d$ of the $1$'s in the period $q$ of $\sigma(\beta(y))$ or $\sigma(\beta'(y))$ as given by \eqref{ratfibeq}, that is 
\begin{equation*}
d = |q|_1/|q|\,.
\end{equation*}

We recall that $\beta(y)$ and $\beta'(y)$ coincide for $y \in [0,1] \setminus \D$, while only one of them is defined for $y = 0,1$. On the other hand, for $y \in \D \setminus \{0,1\}$ the period of $\beta(y)$ is $1$ and the period of $\beta'(y)$ is $0$, and hence in both cases $q = 01$ or $q = 10$ and then $d=1/2$. 

\medskip

We also observe that actually $d$ does not depend on the specific choice for the period $q$ in $\sigma(\beta(y))$ or $\sigma(\beta'(y))$, in that $d$ does not change if we choose a different starting digit for the period of the word or consider as a period any multiple of a minimal one. In fact, $d$ coincides with the asymptotic relative frequency $f(1,\sigma(\beta(y)))$ or $f(1,\sigma(\beta'(y)))$.

\begin{proposition}\label{dimratfib}
For every $y \in \Q$ we have $\dim_H R^{-1}(y) = -\log_2{t}$, with $t$ the unique real number in $(0,1)$ verifying the equation
\begin{equation*}
t^2+t^{1/d} = 1\,,
\end{equation*}
where $d = |q|_1/|q| \in (0,1]$ is the density of the $1$'s in the period $q$ of $\sigma(\beta(y))$ or $\sigma(\beta'(y))$.
\end{proposition}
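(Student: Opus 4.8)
The plan is to pass from $R^{-1}(y)$ to a purely symbolic set, recognize that set as the limit set of a countable self-similar iterated function system, and then solve the resulting pressure equation. First I would reduce to the fiber inside $\B$. By \eqref{R-1bis}, $R^{-1}(y)=\xi(\fibexp{b})$ with $b=\sigma(\beta(y))$, up to adjoining the point $0$ when $y=2/3$ and up to taking the union with $\xi(\fibexp{\sigma(\beta'(y))})$ when $y\in\D\setminus\{0,1\}$; neither operation affects the Hausdorff dimension (and in the dyadic case both pieces carry the same value $d=1/2$). As in the proof of Proposition \ref{ratfib}, $b$ is eventually periodic, say $b=v\per{q}$ with $q$ of length $\ell=|q|$ and $k=|q|_1$ occurrences of $1$, so $\ell/k=1/d$; moreover $b\in\S$ by the properties of $\sigma$. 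Since $b\in\S$, the proof of Lemma \ref{psibc} shows that $\beta$ is $2$-Lipschitz on $\xi(\fibexp{b})$, and as $\xi$ is $2$-Lipschitz the map $\xi\restr{\fibexp{b}}$ is a bi-Lipschitz homeomorphism onto $\xi(\fibexp{b})$. Hence $\dim_H R^{-1}(y)=\dim_H\fibexp{b}$, computed in the symbolic metric of $\B$.

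Next I would exhibit the self-similar structure of $\fibexp{b}$. Let $\mathcal P$ be the countable set of finite words obtained from $q$ by inserting a block $0^{2a_j}$ immediately before its $j$-th $1$, for arbitrary $a_1,\dots,a_k\geq 0$; distinct tuples give distinct words, each of length $\ell+2(a_1+\dots+a_k)$, so there are exactly $\binom{n+k-1}{k-1}$ elements of $\mathcal P$ of length $\ell+2n$. Because inserting $0^{2a_i}$ before the $i$-th $1$ of $b$ respects the period decomposition of $\per{q}$, every element of $\fibexp{b}$ factors uniquely as an ``expanded prefix'' (ranging over some countable set $\mathcal V$) followed by an infinite concatenation $P_0P_1P_2\cdots$ of elements of $\mathcal P$ chosen independently. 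Thus $\fibexp{b}=\bigcup_{w\in\mathcal V}w\cdot\mathcal P^\omega$ is a countable union of similar copies of $\mathcal P^\omega$, so $\dim_H\fibexp{b}=\dim_H\mathcal P^\omega$ by countable stability; and the prepending maps $x\mapsto Px$ ($P\in\mathcal P$), of contraction ratio $2^{-|P|}$, form a countable self-similar system satisfying the open set condition via cylinders. Its pressure function is
\begin{equation*}
\Phi(s)=\sum_{P\in\mathcal P}2^{-s|P|}=2^{-s\ell}\Bigl(\sum_{a\geq 0}2^{-2sa}\Bigr)^{\!k}=\frac{2^{-s\ell}}{(1-2^{-2s})^{k}},
\end{equation*}
finite and strictly decreasing for $s>0$; let $s_0>0$ be the unique value with $\Phi(s_0)=1$.

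Finally I would prove $\dim_H\mathcal P^\omega=s_0$. For the upper bound, fix $s'>s_0$, so $\Phi(s')<1$; iterating $\mathcal P^\omega=\bigcup_{P\in\mathcal P}P\cdot\mathcal P^\omega$, for each $m$ the cylinders $P_1\cdots P_m\cdot\mathcal P^\omega$ cover $\mathcal P^\omega$, have diameters $2^{-(|P_1|+\dots+|P_m|)}\leq 2^{-m\ell}\to 0$, and the sum of the $s'$-th powers of their diameters equals $\bigl(\sum_{P\in\mathcal P}2^{-s'|P|}\bigr)^{m}=\Phi(s')^{m}\to 0$; hence $\mathcal H^{s'}(\mathcal P^\omega)=0$ and $\dim_H\mathcal P^\omega\leq s_0$. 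For the lower bound, since $\Phi(s_0)=1$ one defines on $\mathcal P^\omega$ the Bernoulli product measure $\mu$ with $\mu([P_1\cdots P_m])=\prod_i 2^{-s_0|P_i|}=\operatorname{diam}([P_1\cdots P_m])^{s_0}$; bounding the $\mu$-measure of an arbitrary ball by that of the largest aligned cylinder it contains, together with a bound of the form $O(2^{-s_0|u|})$ for the probability that the following $\mathcal P$-block begins with a prescribed prefix $u$ (geometric decay in the definition of $\mathcal P$), yields $\mu(B(x,r))\leq C\,r^{s_0}$, whence $\dim_H\mathcal P^\omega\geq s_0$ by the mass distribution principle. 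Therefore $2^{-s_0\ell}=(1-2^{-2s_0})^{k}$; writing $t=2^{-s_0}\in(0,1)$ and taking $k$-th roots with $\ell/k=1/d$ gives $t^{1/d}=1-t^{2}$, i.e.\ $t^{2}+t^{1/d}=1$, and the root $t\in(0,1)$ is unique since $t\mapsto t^{2}+t^{1/d}$ is continuous and strictly increasing on $[0,1]$ from $0$ to $2$ (here $1/d\geq 1$). Thus $\dim_H R^{-1}(y)=-\log_2 t$.

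The delicate point is the lower bound: because the iterated function system is infinite, balls in $\mathcal P^\omega$ are not comparable to the aligned cylinders $[P_1\cdots P_m]$, so one must control the $\mu$-measure of cylinders that cut a single $\mathcal P$-block in the middle — equivalently, bound the probability that one $\mathcal P$-block starts with a given prefix. The reduction to the symbolic fiber and the upper bound are essentially bookkeeping; alternatively, the dimension computation for $\mathcal P^\omega$ can be quoted from the general theory of infinite conformal iterated function systems.
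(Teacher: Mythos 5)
Your proposal is correct and arrives at the paper's computation through the same underlying insight --- that the fiber is the limit set of a countably infinite self-similar system indexed by $\mathbb N^{|q|_1}$ with contraction ratios $2^{-2(a_1+\cdots+a_k)-|q|}$, whose Moran equation $\sum_a r_a^s=1$ rearranges to $t^2+t^{1/d}=1$ --- but the packaging differs in three ways. First, you work in the symbolic space $\B$ after transporting the problem there via the bi-Lipschitz property of $\xi$ on $\xi(\fibexp{b})$ for $b\in\S$ (extracted from the proof of Lemma \ref{psibc}), whereas the paper stays in $[0,1]$ and builds genuine real similarities $T_a$. Second, you dispose of the pre-periodic prefix $v$ by writing $\fibexp{v\per{q}}$ as a countable union of scaled copies of $\mathcal P^\omega$ and invoking countable stability of Hausdorff dimension; the paper instead normalizes to $v=\epsilon$ (and to $q$ ending in $1$) using the relation $v\per{q}\approx\per{q}$ and the bi-Lipschitz maps of Lemma \ref{psibc}. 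Both reductions are sound; yours is arguably more self-contained, the paper's reuses machinery already built. Third, you prove the dimension formula by hand rather than citing Mor\'an's theorem for infinite iterated function systems, which is what the paper does. Your upper bound (covering by aligned cylinders, with $\sum\mathrm{diam}^{s'}=\Phi(s')^m\to0$) is complete. The lower bound is the one place where you only sketch: the needed estimate is an upper bound on $\mu(B(x,r))$, so the reduction should be to the \emph{shortest} aligned cylinder \emph{containing} the ball (your phrase ``largest aligned cylinder it contains'' points the wrong way), and one must then supply the tail estimate $\sum\{2^{-s_0|P|}\st P\in\mathcal P,\ u'\ \text{a prefix of}\ P\}\leq C\,2^{-s_0|u'|}$, which does hold here because the constraint ``$u'$ is a prefix of $P(c)$'' pins down $c_1,\dots,c_i$ and imposes a lower bound on $c_{i+1}$, and the resulting geometric tail $\sum_{c\geq\gamma}2^{-2s_0c}$ decays at exactly the rate $2^{-s_0\cdot(\text{length contributed})}$ up to a constant. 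You correctly identify this as the delicate point and note that it can be outsourced to the general theory of infinite (conformal) iterated function systems --- which is precisely the route the paper takes by applying Theorem 2.2 of Mor\'an under the open set condition. So the only item to tighten, should you want a fully self-contained argument, is that Frostman estimate.
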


\begin{proof}
According to \eqref{R-1bis} and \eqref{ratfibeq}, and recalling that $\dim_H(A \cup B) = \max\{\dim_HA,\dim_HB\}$ for every $A,B \subset [0,1]$, we only need to compute $\dim_H \xi(\fibexp{v\per{q}})$ with $v\per{q} \in \S$. Moreover, thanks to Lemma \ref{psibc} and the bi-Lipschitz invariance of the Hausdorff dimension, we can limit ourselves to consider the case when $v = \epsilon$, since $v\per{q} \approx \per{q}$ and both $v\per{q}$ and $\per{q}$ belong to $\S$. In addition, by applying once again Lemma \ref{psibc}, we can assume that the last digit of $q$ is 1 without changing the $\approx$ class. Indeed, $q$ must contain some 1 and thus it can be written as $q = q'1q''$ for some (possibly empty) binary words $q'$ and $q''$, whence $\per{q} = \pper{q'1q''} = q'1\pper{q''q'1} \approx \pper{q''q'1}$.

Given $q = q_1q_2\dots q_n \in \{0,1\}^n$ with $q_n = 1$, we put $m = |q|_1$ and consider the family of contractive similarities
\begin{equation*}
\T = \{T_a:\mathbb R \to \mathbb R\}_{a \in \mathbb N^m}
\end{equation*}
defined by
\begin{equation*}
T_{a}(x) = x/2^{2(a_1+ \ldots+a_m)+n} + x_a\,,
\end{equation*}
where $x_a = 0.\binexp{a}{q}$, for every $a \in \mathbb N^m$. Such family $\T$ is clearly relatively compact in the space of all the contractive similarities of $\mathbb R$ with the topology of the uniform convergence over bounded sets.

In terms of binary expressions, since $2(a_1 + \ldots + a_m)+n = |\binexp{a}{q}|$, we have 
\begin{equation}\label{Taexp}
T_a(0.x_1x_2 \dots) = 0.\binexp{a}{q}x_1x_2 \dots\,. 
\end{equation}

This allows us to derive in a straightforward way that
\begin{equation*}
\xi(\fibexp{\per{q}}) = \cup_{a \in \mathbb N^m} T_a(\xi(\fibexp{\per{q}})).
\end{equation*}
Indeed, the equality $\binexp{a}{q}\per{q} = \binexp{a\per{0}}{\per{q}}$ gives the inclusion $T_a(\xi(\fibexp{\per{q}})) \subset \xi(\fibexp{\per{q}})$ for every $a \in \mathbb N^m$. On the other hand, for every $x = \xi(\binexp{a}{\per{q}}) \in \xi(\fibexp{\per{q}})$ with $a \in \A$, we have $x = \xi(\binexp{a_1a_2\dots a_m}{q}\binexp{a_{m+1}a_{m+2}\dots}{\per{q}}) \in T_{a\trunc{m}}(\xi(\fibexp{\per{q}}))$.

From \eqref{Taexp}, we also see that $T_a((0,1)) \subset (0,1)$ for every $a \in \mathbb N^m$, and 
$T_a((0,1)) \cap T_{a'}((0,1)) = \emptyset$ for every $a,a'\in \mathbb N^m$ with $a \neq a'$, that is the family $\T$ satisfies the so called ``open set condition''.

Then, we are in position to apply Theorem 2.2 of \cite{Moran} (see also \cite[Theorem 3.11]{Fernau}), 
in order to conclude that
\begin{equation}\label{dimHeq}
\dim_H(\xi(\fibexp{b})) = \inf\{s > 0 \st \!\textstyle\sum_{a \in \mathbb N^m}r_a^s \leq 1\},
\end{equation}
where $r_a = 1/2^{2(a_1 + \ldots + a_m)+n}$ denotes the similarity ratio of $T_a$, for every $a \in \mathbb N^m$.

Now, recalling that $d = m/n$, we have
\begin{eqnarray*}
\textstyle\sum_{a \in \mathbb N^m}r_a^{\mkern3mus}
& \!\!\!=\!\!\! & 
\textstyle\sum_{a \in \mathbb N^m} 1/2^{(2(a_1 + \ldots + a_m)+n)s}\\[2pt]
& \!\!\!=\!\!\! &
\textstyle \big(\sum_{a_1 \geq 0}1/2^{2a_1s}\big) \cdots \big(\sum_{a_m \geq 0}1/2^{2a_ms}\big)/2^{ns}\\[2pt]
& \!\!\!=\!\!\! &
2^{-ns}/(1-1/2^{2s})^m = (2^{s/d} (1-2^{-2s}))^{-m}.
\end{eqnarray*}
Therefore, from \eqref{dimHeq} we deduce that $\dim_H(\xi(\fibexp{b}))$ is given by the unique $s > 0$ where the monotonic real function $f(s) = 2^{s/d}(1-2^{-2s})$ assumes the value 1. Finally, the equation $f(s)=1$ becomes $t^2 + t^{1/d} = 1$ as in the statement, by putting $s = -\log_2t$.
\end{proof}

We remark that actually the open set condition as formulated in the proof above also implies that the rational fibers of $R$ have positive Hausdorff measure in the relative Hausdorff dimension (see \cite[Theorem 2.2]{Moran}).

\medskip

To conclude this section, we want to provide a common lower and upper bound for the Hausdorff dimension of all the fiber of $R$. To this end, we first apply Proposition \ref{dimratfib} to compute the Hausdorff dimension of the two ``extremal'' rational fibers $R^{-1}(1)$ and $R^{-1}(1/3)$. We have
\begin{equation*}
\begin{array}{llll}
\beta(1) = \per{1} & \!\!\Rightarrow\, \sigma(\beta(1)) = \pper{01} 
& \!\!\Rightarrow\, d=1/2 & \!\!\Rightarrow\, \dim_H R^{-1}(1) = 1/2\,,\\[8pt]
\beta(1/3) = \pper{01} & \!\!\Rightarrow\, \sigma(\beta(1/3)) = \per{1}
& \!\!\Rightarrow\, d=1 & \!\!\Rightarrow\, \dim_H R^{-1}(1/3) = \log_2\varphi\,,
\end{array}
\end{equation*}
where $\varphi = (1+\sqrt5)/2$ is the golden ratio.

\begin{proposition}\label{dimfib}
For every $y \in [0,1]$, the following holds
\begin{equation*}
1/2 \leq \dim_H R^{-1}(y) \leq \log_2\varphi\,.
\end{equation*}
\end{proposition}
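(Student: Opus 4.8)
The plan is to reduce the bound for an arbitrary fiber $R^{-1}(y)$ to the already-computed extremal cases $R^{-1}(1)$ and $R^{-1}(1/3)$ by a comparison argument based on Lemma \ref{psibc}. By equation \eqref{R-1bis}, $R^{-1}(y)$ is, up to at most a finite union and the harmless point $\{0\}$, of the form $\xi(\fibexp{\sigma(\beta(y))})$ or $\xi(\fibexp{\sigma(\beta'(y))})$; since Hausdorff dimension is stable under finite unions and under adding a point, it suffices to bound $\dim_H \xi(\fibexp{b})$ for an arbitrary $b = \sigma(b_0) \in \S$ (so $b$ has no two consecutive $0$'s) with $b_0 \in \C$.

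For the \emph{lower} bound, I would observe that any $b \in \C$ satisfies $\per1 \succcurlyeq b$ trivially (take $m = 0$ in \eqref{prec}, since $|b\trunc{i}|_1 \leq i = |\per1\trunc{i}|_1$), and $\per1 \in \S$. Hence Lemma \ref{psibc} gives a Lipschitz surjection $\psi_{\per1,b}: \xi(\fibexp{\per1}) \to \xi(\fibexp{b})$ \emph{from} the fiber-type set of $R^{-1}(1)$, which would go the wrong way; instead I use that $\psi_{b,\per1}$ is Lipschitz requires $b \succcurlyeq \per1$, which need not hold. So the correct move is: a Lipschitz \emph{surjection} cannot increase Hausdorff dimension, and $\psi_{\per1,b}$ maps $\xi(\fibexp{\per1})$ onto $\xi(\fibexp{b})$, so this would only give $\dim_H \xi(\fibexp{b}) \leq \dim_H\xi(\fibexp{\per1}) = 1/2$ — again wrong direction for a lower bound. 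The right tool is that a Lipschitz \emph{bijection whose inverse} is also Lipschitz preserves dimension; more usefully, $\phi_{b,c}:\fibexp{b}\to\fibexp{c}$ is always a homeomorphism and $\xi\restr\C$ is bi-Lipschitz on each $\fibexp{b}$ with $b \in \S$ by the estimate in the proof of Lemma \ref{psibc}, so it is really the symbolic set $\fibexp{b} \subset \A$ whose dimension I should track; and for \emph{that} set the similarity structure $\fibexp{b} = \bigcup_{a\in\mathbb N}T_a(\fibexp{\sigma^{\,\cdot}})$ analogous to the proof of Proposition \ref{dimratfib} always applies. The cleanest route, then, is: cover $\xi(\fibexp{b})$ by the sets $\xi(\fibexp{\sigma(b_0)})$ and use the self-similar presentation $\xi(\fibexp{b}) \supset T_a(\xi(\fibexp{b'}))$ for the appropriate shifted words to squeeze the dimension between the two extremes, monotonicity of the dimension in $d$ from Proposition \ref{dimratfib} handling the rational endpoints.

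Concretely, for the lower bound I would exhibit inside $\xi(\fibexp{b})$ a bi-Lipschitz copy of $\xi(\fibexp{\per1})$: since $b \in \S$ starts (after the first $1$, which exists) with a block pattern in which $1$'s are at least as dense as every other letter, one shows $b \approx c$ for some $c \in \S$ with $c \succcurlyeq \per1$, or alternatively one restricts to a sub-selfsimilar piece. For the upper bound, any $b \in \S$ satisfies $b \succcurlyeq \per1$ is false in general, but $b$ \emph{is} dominated in density by $\per1$ and the dimension formula $\dim_H = -\log_2 t$ with $t^2 + t^{1/d} = 1$ is increasing in $d$, with $d = 1$ (the densest possible period, forced exactly by $b \in \S$ having no $00$) giving $\log_2\varphi$; so the upper bound should follow from $\per1 \succcurlyeq \sigma(\text{anything})$ combined with Lemma \ref{psibc} applied in the direction $\sigma(b_0) \succcurlyeq$ some word with period density tending to the relevant value.

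I expect the main obstacle to be getting the $\succcurlyeq$ comparisons to point in the right direction so that Lemma \ref{psibc} yields Lipschitz maps that \emph{decrease} dimension toward $\log_2\varphi$ from above and \emph{increase} it toward $1/2$ from below — equivalently, showing that among all $b \in \S$ the "extremes" of Hausdorff dimension are realized (or at least bracketed) by $\per{(01)} = \sigma(\per1)$ and $\per1 = \sigma(\per{(01)})$. The lower bound $1/2$ I would get by noting that $b \succcurlyeq \per1$ never holds for $b \neq \per1$ but $\per1 \succcurlyeq b$ \emph{always} holds, so Lemma \ref{psibc} gives $\psi_{\per1,b}$ Lipschitz whenever $\per1 \in \S$ — wait, that needs $\per1 \succcurlyeq b$ \emph{and} $\per1 \in \S$, both true, yielding a Lipschitz surjection $\xi(\fibexp{\per1}) \to \xi(\fibexp{b})$ and hence $\dim_H\xi(\fibexp{b}) \leq 1/2$, the \emph{wrong} inequality; conversely to get $\geq 1/2$ I would instead need $b \succcurlyeq \per1$ with $b \in \S$, which fails. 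The resolution — and the genuinely delicate point — is that one must not compare with $\per1$ directly but produce, inside the fiber of \emph{any} $y$, an honest bi-Lipschitz image of a rational fiber of density $\geq 1/2$; this is plausible because $\sigma(b_0) \in \S$ automatically spaces $1$'s at most two apart, forcing density at least $1/2$ along suitable subsequences, but making "along suitable subsequences" into a clean $\approx$-comparison with a periodic word is where the real work lies. Once that is done, monotonicity of $d \mapsto -\log_2 t(d)$ and the endpoint computations $\dim_H R^{-1}(1) = 1/2$, $\dim_H R^{-1}(1/3) = \log_2\varphi$ close the argument.
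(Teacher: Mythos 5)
There is a genuine gap, and it sits exactly where you kept circling: you have mis-identified the extremal sets. The fiber of $y$ is $\xi(\fibexp{\sigma(\beta(y))})$, not $\xi(\fibexp{\beta(y)})$, so $\xi(\fibexp{\per{1}})$ is the fiber of $1/3$ (density $d=1$ of 1's in the indexing word, hence dimension $\log_2\varphi$, the \emph{largest} value), while the fiber of $1$ is $\xi(\fibexp{\pper{01}})$ ($d=1/2$, dimension $1/2$, the smallest). Your statement ``$\dim_H\xi(\fibexp{\per{1}})=1/2$'' is the error that makes every comparison appear to point the wrong way. Heuristically, $\fibexp{b}$ is built by inserting blocks $0^{2a_k}$ before each $1$ of $b$, so the more $1$'s $b$ has, the richer the fiber: $\per{1}$ indexes the fattest fiber, $\pper{01}$ (the sparsest word in $\S$) the thinnest.

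With the labels corrected, the argument you wrote down and then discarded \emph{is} the paper's proof, and none of the machinery you defer to (``bi-Lipschitz copies of a rational fiber'', sub-self-similar pieces, density along subsequences) is needed. By \eqref{R-1bis} and stability of $\dim_H$ under finite unions it suffices to bound $\dim_H\xi(\fibexp{b})$ for $b\in\S$. For every such $b$ one has the obvious relations $\per{1}\succcurlyeq b$ (with $m=0$, since $|b\trunc{i}|_1\leq i$) and $b\succcurlyeq\pper{01}$ (with $m=0$, since no two consecutive $0$'s forces $|b\trunc{i}|_1\geq\lfloor i/2\rfloor=|\pper{01}\trunc{i}|_1$), and all three words lie in $\S$. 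Lemma \ref{psibc} therefore yields Lipschitz surjections $\xi(\fibexp{\per{1}})\to\xi(\fibexp{b})$ and $\xi(\fibexp{b})\to\xi(\fibexp{\pper{01}})$; since Hausdorff dimension cannot increase under Lipschitz maps, this gives
\begin{equation*}
1/2=\dim_H\xi(\fibexp{\pper{01}})\leq\dim_H\xi(\fibexp{b})\leq\dim_H\xi(\fibexp{\per{1}})=\log_2\varphi\,,
\end{equation*}
where the two endpoint values come from Proposition \ref{dimratfib} applied to $y=1$ and $y=1/3$. Both inequalities land on the correct side, and the ``genuinely delicate point'' you anticipated does not arise.
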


\begin{proof}
Thanks to \eqref{R-1bis} and the formula $\dim_H(A \cup B) = \max\{\dim_HA,\dim_HB\}$ holding for every $A,B \subset [0,1]$, it suffices to prove that $1/2 \leq \dim_H \xi(\fibexp{b}) \leq \log_2\varphi$ for every $b \in \S$. This immediately follows from Lemma \ref{psibc}, taking into account the obvious relation $\per{1} \succcurlyeq b \succcurlyeq \pper{01}$ and the fact that the Hausdorff dimension cannot increase under Lipschitz maps.
\end{proof}


\section{Dynamical properties}
\label{dynamics}

For the sake of convenience, we extend the notation $\rho_v$ introduced in \eqref{rhok} and \eqref{rhov}
by defining the maps
\begin{equation*}
\rho_v^n: \{0,1\}^\infty \to \{0,1\}^\infty
\end{equation*}
for every finite binary word $v$ and every $n \geq 1$ as follows. 

\pagebreak 

We start by setting
\begin{equation}\label{rho1}
\rho^1_v(w) = \rho_v(w)\,,
\end{equation}
with $w \in \{0,1\}^\infty$, and then for $n > 1$ we inductively define
\begin{equation}\label{rhoh}
\rho^n_v = \rho_{\rho^{n-1}(v)} \circ \rho^{n-1}_v\,.
\end{equation}

We emphasize that, in general $\rho^n_v$ does not coincide with $(\rho_v)^n$, the $n$-th iteration of the map $\rho_v$, when $v \neq \epsilon$ (for $v = \epsilon$ we have $\rho_\epsilon = \rho$, and hence $\rho_\epsilon^n = \rho^n = (\rho_\epsilon)^n$ for every $n \geq 1$). Indeed, by expanding the inductive definition we obtain
\begin{equation}\label{rhonv}
\rho_v^n = \rho_{\rho^{n-1}(v)} \circ \rho_{\rho^{n-2}(v)} \circ \ldots \circ \rho_{\rho(v)} 
\circ \rho_v\,,
\end{equation}
which is a composition of $n$ maps, each equal to $\rho$ or $\widetilde\rho$. Namely, the $i$-th map in the composition is $\rho$ or $\widetilde\rho$ depending on the parity of the length $|\rho^{n-i}(v)|$ of the image of $v$ under the $(n-i)$-th power of $\rho$. Nevertheless, a straightforward induction on $n \geq 1$ based on \eqref{rhov} gives the identity
\begin{equation}\label{rhonvw}
\rho^n(vw) = \rho^n(v)\rho^n_v(w)\,,
\end{equation}
where only the first two occurrences of $n$ denote iterations of maps.

\medskip

As an consequence of the surjectivity of $\rho$, and hence of $\widetilde\rho$, also all the maps $\rho_v^n$ are surjective. In fact, as it happens for $\rho$ and $\widetilde\rho$, the inverse image $(\rho_v^n)^{-1}(w)$ is countably infinite if $w$ is a finite non-empty word, while it is uncountable if $w$ is an infinite word, that is $w \in \B$.

On the other hand, contrary to what happens for $\rho$ and $\widetilde\rho$, the fact that $w \in \C$ is not enough to guarantee that $\rho_v^n(w) \in \B$ when $n > 1$. For example
\begin{equation*}
\rho_{110}^2(\pper{01}) = (\rho_{\rho(110)} \circ \rho_{110})(\pper{01}) =
(\rho_{01} \circ \rho_{110})(\pper{01}) = \rho(\widetilde\rho(\pper{01})) = \rho(\per{0}) = \epsilon\,.
\end{equation*}

\medskip

The next two lemmas constitute fundamental technical tools used for establishing the dynamical properties of the map $R$.

\begin{lemma}\label{lemmacyl}
For any sequence $(w_n)_{n \geq 0}$ of non-empty finite binary words and any sequence $(k_n)_{n \geq 0}$ of natural numbers with $k_0 = 0$ and $k_n \geq k_{n-1} + n_\epsilon(w_{n-1})$ for every $n \geq 1$, there is an uncountable set $B \subset \cyl{w_0} \cap \C \subset \B$ such that $\rho^{k_n}(b) \in \cyl{w_n}$ for every $b \in B$ and every $n \geq 1$.
\end{lemma}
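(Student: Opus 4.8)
The plan is to build the set $B$ recursively, at each stage $n$ committing to a finite prefix of the sequences it will contain, chosen so that applying $\rho$ the right number of times lands us inside $\cyl{w_n}$, while leaving enough freedom to continue forever and to guarantee uncountability at the end. The key technical fact that makes this work is equation \eqref{rhonvw}: if I have already fixed a finite prefix $v$ of length $\ell$, then $\rho^{k}(vw) = \rho^{k}(v)\,\rho^{k}_v(w)$, so the image under $\rho^{k}$ decomposes into a ``frozen'' part coming from $v$ and a ``tail'' part that I can still steer by choosing $w$; and by Proposition \ref{fiber} (together with the surjectivity of all the $\rho^n_v$, noted just before the lemma) the tail part can be made to hit any prescribed finite word, with uncountably many choices available.

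\emph{Construction.} First I would produce a single ``scaffold'' sequence $u \in \cyl{w_0} \cap \C$ and an increasing sequence of prefix lengths $\ell_0 < \ell_1 < \ell_2 < \cdots$ such that, writing $u^{(n)} = u\trunc{\ell_n}$, we have $\rho^{k_n}(u^{(n)})$ already equal to (or extendable to) a word having $w_n$ as a prefix, and such that the portion of $u$ beyond position $\ell_n$ does not disturb this: concretely, I want $|\rho^{k_n}(u^{(n)})| \geq |w_n|$, so that by \eqref{rhonvw} any completion $u^{(n)}v'$ still satisfies $\rho^{k_n}(u^{(n)}v') \in \cyl{w_n}$. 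That the lengths $k_n$ grow fast enough to make this possible is exactly where the hypothesis $k_n \geq k_{n-1} + n_\epsilon(w_{n-1})$ enters — it guarantees that between stage $n-1$ and stage $n$ we have applied $\rho$ at least $n_\epsilon(w_{n-1})$ extra times, which by the definition \eqref{vanord} of the vanishing order is more than enough to erase any ``debris'' inserted at stage $n-1$ and rebuild a fresh prefix $w_n$ of the required length in the image. Proposition \ref{erasing} is the quantitative backing here. At each stage I build $u^{(n)}$ from $u^{(n-1)}$ by appending a finite block, chosen using the surjectivity of the relevant $\rho^j_v$ (or more directly using the section $\sigma$ of Proposition \ref{fiber}) so that the new image has $w_n$ as a prefix; I also insert, say, a pair of extra free bits at each stage whose value I leave unspecified, these being the source of uncountability.

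\emph{Uncountability and membership in $\C$.} Let $B$ be the set of all infinite sequences obtained by running this construction with all possible binary choices at the designated free positions (and completing arbitrarily, but always avoiding the all-eventually-$0$ trap so as to stay in $\C$; e.g.\ interleaving $1$'s). Distinct choice sequences give distinct elements, so $|B| = 2^{\aleph_0}$. Each $b \in B$ lies in $\cyl{w_0}$ because $\ell_0$ was chosen $\geq |w_0|$ with $u^{(0)} = w_0$, and each $b$ lies in $\C$ by construction. Finally, for every $n \geq 1$ and every $b \in B$, write $b = u^{(n)} b'$; then \eqref{rhonvw} gives $\rho^{k_n}(b) = \rho^{k_n}(u^{(n)})\,\rho^{k_n}_{u^{(n)}}(b')$, and since $\rho^{k_n}(u^{(n)})$ begins with $w_n$ we conclude $\rho^{k_n}(b) \in \cyl{w_n}$, as required.

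\emph{Main obstacle.} The delicate point is bookkeeping the \emph{parity} of positions through the iterations: $\rho$ and $\widetilde\rho$ alternate according to the length $|\rho^{n-i}(v)|$ (see \eqref{rhonv}), so when I append a block at stage $n$ I must know the parity of the current length of $\rho^{k_n}(u^{(n-1)})$ in order to choose the block that produces $w_n$ rather than its complement — this is precisely the $\rho$ versus $\widetilde\rho$ distinction. The clean way around this is to always append blocks of even length (padding with a trailing $0$ if necessary, which costs nothing since $\rho(x0)$-type truncation is harmless and $n_\epsilon(w0) = n_\epsilon(w)$), so that the parity of all subsequent positions is preserved and the relevant map is consistently $\rho$; then the scaffold construction goes through without case analysis, and the $n_\epsilon$ lower bound on the gaps $k_n - k_{n-1}$ is exactly what guarantees there is room to do so.
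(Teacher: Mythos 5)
Your overall architecture is the same as the paper's: build $b$ as an infinite concatenation of finite blocks, use \eqref{rhonvw} to split $\rho^{k_n}(b)$ into a frozen part and a steerable tail, use the surjectivity of the maps $\rho^k_v$ to steer the tail onto $w_n$, and use the hypothesis $k_n - k_{n-1} \geq n_\epsilon(w_{n-1})$ as the erasure budget for the frozen part. However, there is a genuine gap in your uncountability mechanism, and it is not cosmetic.

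The inductive engine only runs if the image of the committed prefix at time $k_{n-1}$ is \emph{exactly} $w_{n-1}$ (or at least $w_{n-1}$ with nothing after it): the hypothesis budgets precisely $n_\epsilon(w_{n-1})$ further iterations, which erases $w_{n-1}$ but not $w_{n-1}d$ for an uncontrolled nonempty debris word $d$ (e.g.\ $n_\epsilon(0)=1$ while $n_\epsilon(011)=3$). Your ``pair of unspecified free bits'' inserted after the $w_{n-1}$-producing part of the block creates exactly such debris: the four values $00,01,10,11$ have images of lengths $0,1,1,2$ under one application of $\rho$, so the debris is nonempty for three of the four values, survives into $\rho^{k_n}(u^{(n)})$, and moreover has a length whose parity depends on the bit values — so the maps $\rho^{k_n}_{u^{(n-1)}}$ differ across members of your family and the next block cannot be chosen uniformly. (For the same reason your parity fix is insufficient: appending an even-length block preserves the parity of $|v|$ but not of $|\rho^j(v)|$ for $j\geq 1$, which is what \eqref{rhonv} actually depends on; it is also unnecessary, since one can simply invoke surjectivity of whichever composition $\rho^{k_n}_{u_n}$ the prefix happens to determine.) The paper sidesteps all of this by drawing the entire $n$-th block from the exact fiber $(\rho^{k_n}_{u_n})^{-1}(w_n)$, so that the image of the prefix at time $k_n$ is $w_n$ on the nose with no debris, and by extracting uncountability not from extra bits but from the fact that this fiber is infinite at every one of the infinitely many stages (with the requirement that each block end in $1$ forcing distinct choice sequences to yield distinct words $b$). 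If you want to keep a ``free bits'' picture, the freedom must be of the form ``optionally insert a pair of consecutive $0$'s'': that is invisible to $\rho$ and preserves all parities, and is in fact exactly the structure of the fibers described in Proposition \ref{fiber}. As written, with genuinely unspecified bit values, your construction breaks at the first stage after the insertion.
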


\begin{proof}
For any sequences $(w_n)_{n \geq 0}$ and $(k_n)_{n \geq 0}$ as above, we construct the generic element of $B$ as an infinite concatenation $b = v_0v_1 \ldots \in \C$, where the $v_n$'s are non-empty finite binary words inductively defined as follows. We start with $v_0 = w_0$. Then, given $v_0,v_1, \dots, v_{n-1}$ with $n > 0$, we put $u_n = v_0v_1 \dots v_{n-1}$ and we choose $v_n$ to be any of the infinitely many (non-empty) words in $(\rho_{u_n}^{k_n})^{-1}(w_n)$ which ends with $1$. We observe that, $\rho^{k_n}(u_n) = \epsilon$ for every $n \geq 1$, whatever the choice of the $v_n$'s. In fact, $\rho^{k_1}(u_1) = \rho^{k_1}(v_0) = \epsilon$ since $k_1 \geq n_\epsilon(v_0)$, while for $n >1$, taking into account that $k_n - k_{n-1} \geq n_\epsilon(w_{n-1})$, we have by induction
\begin{equation*}
\begin{array}{rcl}
\rho^{k_n}(u_n) 
& \!\!\!=\!\!\! & \rho^{k_n-k_{n-1}}(\rho^{k_{n-1}}(u_{n-1}v_{n-1}))\\[2pt]
& \!\!\!=\!\!\! & \rho^{k_n-k_{n-1}}(\rho^{k_{n-1}}(u_{n-1})\rho^{k_{n-1}}_{u_{n-1}}(v_{n-1}))\\[2pt]
& \!\!\!=\!\!\! & \rho^{k_n-k_{n-1}}(w_{n-1})) = \epsilon\,.
\end{array}
\end{equation*}
The condition that each $v_n$ ends with 1 prevents different choices for the $v_n$'s to produce the same final word $b$, as it can be easily deduced from the fact that $|v_n|_1 = |w_n|$ does not depend on the specific choice of $v_n$. In this way, as the result of infinitely many infinite choices, one for each $n > 0$, we get an uncountable set $B$ of words $b = v_0v_1 \ldots \in \cyl{w_0} \cap \C$ such that
\begin{equation*}
\rho^{k_n}(b) = \rho^{k_n}(u_n v_n v_{n+1} \dots) = \rho^{k_n}(u_n) \rho^{k_n}_{u_n}(v_n)
\rho_{u_nv_n}^{k_n}(v_{n+1}\dots) = w_n \rho_{u_nv_n}^{k_n}(v_{n+1}\dots) \in \cyl{w_n}
\end{equation*}
for every $n \geq 1$, where the second equality can be obtained by two applications of \eqref{rhonvw}, while the third one immediately follows from $\rho^{k_n}(u_n) = \epsilon$ and $\rho^{k_n}_{u_n}(v_n)=w_n$.
\end{proof}

\begin{lemma}\label{lemmacylbis}
For any sequence $(w_n)_{n \geq 0}$ of non-empty finite binary words there is an uncountable set $B \subset \cyl{w_0} \cap \C \subset \B$ with the property that for every $b \in B$ and every $n,m \geq 0$, there exists $k \geq m$ such that $\rho^k(b) \in \cyl{w_n} \subset \B$.
\end{lemma}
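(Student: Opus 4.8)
The plan is to deduce the statement from Lemma~\ref{lemmacyl} by re-indexing the given sequence so that every word among the $w_n$ occurs infinitely often. First I would fix a map $\nu : \mathbb N \to \mathbb N$ with $\nu(0) = 0$ such that $\nu^{-1}(n)$ is infinite for every $n \geq 0$ --- for instance the usual diagonal enumeration $0,\ 0,1,\ 0,1,2,\ \dots$ --- and set $\widetilde w_j = w_{\nu(j)}$. In this way $\widetilde w_0 = w_0$, every $\widetilde w_j$ is a non-empty finite binary word, and for each fixed $n$ the index set $\{\, j \geq 1 : \widetilde w_j = w_n \,\}$ is infinite.

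Next I would take the sequence $(k_j)_{j \geq 0}$ defined by $k_0 = 0$ and $k_j = k_{j-1} + n_\epsilon(\widetilde w_{j-1})$ for $j \geq 1$; this clearly meets the hypothesis $k_j \geq k_{j-1} + n_\epsilon(\widetilde w_{j-1})$ of Lemma~\ref{lemmacyl}. Since $n_\epsilon$ of any non-empty word is at least $1$, we have $k_j \geq j$, hence $k_j \to \infty$. Applying Lemma~\ref{lemmacyl} to the sequences $(\widetilde w_j)_{j \geq 0}$ and $(k_j)_{j \geq 0}$ produces an uncountable set $B \subset \cyl{\widetilde w_0} \cap \C = \cyl{w_0} \cap \C$ such that $\rho^{k_j}(b) \in \cyl{\widetilde w_j}$ for every $b \in B$ and every $j \geq 1$.

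Finally I would check the asserted property. Given $b \in B$ and $n, m \geq 0$, the set $\{\, j \geq 1 : \widetilde w_j = w_n \,\}$ is infinite and $k_j \geq j$, so it contains an index $j$ with $k_j \geq m$; for this $j$ the integer $k = k_j$ satisfies $k \geq m$ and $\rho^k(b) = \rho^{k_j}(b) \in \cyl{\widetilde w_j} = \cyl{w_n} \subset \B$, which is exactly what is required.

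I expect no real obstacle here beyond bookkeeping, since all the substance is already contained in Lemma~\ref{lemmacyl}. The two points that need a moment's care are that the re-enumeration must hit every $w_n$ infinitely often --- in particular $w_0$ itself must reappear at infinitely many indices $j \geq 1$, because Lemma~\ref{lemmacyl} only controls $\rho^{k_j}(b)$ for $j \geq 1$ --- and that the sequence $(k_j)$ tends to infinity, which is precisely what makes the clause ``$k \geq m$'' satisfiable for arbitrary $m$.
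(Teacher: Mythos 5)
Your proposal is correct and follows essentially the same route as the paper: the paper also replaces $(w_n)_{n\geq 0}$ by the concatenation of all its finite initial subsequences (exactly your diagonal enumeration) so that every word appears infinitely often, and then invokes Lemma~\ref{lemmacyl}. Your write-up is in fact slightly more explicit than the paper's about why the clause ``$k \geq m$'' is achievable, namely because $k_j \geq j \to \infty$.
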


\begin{proof}
Possibly by replacing $(w_n)_{n \geq 0}$ with the concatenation of all its finite initial subsequences, we can assume that each word $w_n$ appears infinitely many times in the sequence.
Under this assumptions, it is enough to prove the existence of an uncountable subset $B \subset \cyl{w_0} \cap \C$ with the weaker property that for every $b \in B$ and $n \geq 1$ there is $k \geq 1$ such that $\rho^k(b) \in \cyl{w_n}$. The existence of such a set $B$ is guaranteed by the previous lemma. 
\end{proof}

We now proceed discussing some asymptotic properties of the orbits of the map $R$. First of all, we show that the set of rationals $\Q = \mathbb Q \cap[0,1]$ is $R$-invariant and establish the asymptotic behaviour of the restriction of $R\restr{\Q}: \Q \to \Q$.

\begin{proposition}\label{rationals}
The set $\Q \subset [0,1]$ is $R$-invariant. Moreover, $R\restr{\Q}$ admits only two periodic orbits, namely $C_0 = \{0,2/3\}$ and $C_1 = \{1,1/3\}$, and $\Q$ decomposes as the disjoint union of two dense subsets
\begin{equation}\label{Qi}
\Q = \Q_0 \cup \Q_1\,,
\end{equation}
where $\Q_i$ consists of all the rationals in $\Q$ whose forward orbit contains $C_i$, for $i=1,2$.
\end{proposition}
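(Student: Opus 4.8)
The heart of the matter is a reduction lemma: the $R$-orbit of any rational eventually lands in the four-element set $F=\{0,1/3,2/3,1\}$; everything else then follows formally. First I would settle the $R$-invariance of $\Q$ and identify the cycles. If $y\in\Q\setminus\{0\}$ then $\beta(y)\in\C$ is eventually periodic, say $\beta(y)=u\per p$ with $u,p$ finite words, $|p|\ge1$; replacing $u$ by $up_1$ and $p$ by its cyclic shift if necessary, we may assume $|u|$ even, and then by \eqref{rhov} one has $\rho(\beta(y))=\rho(u)\rho(\per p)$ with $\rho(\per p)=\per{\rho(p)}$ when $|p|$ is even and $\rho(\per p)=\pper{\rho(p)\widetilde\rho(p)}$ when $|p|$ is odd. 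In both cases $\rho(\beta(y))\in\B$ is eventually periodic, so $R(y)=\xi(\rho(\beta(y)))\in\Q$. A direct computation of $\rho$ on $\per0$, $\per1$, $\pper{01}$, $\pper{10}$ gives $R(0)=2/3$, $R(2/3)=0$, $R(1)=1/3$, $R(1/3)=1$; hence $C_0=\{0,2/3\}$ and $C_1=\{1,1/3\}$ are period-$2$ orbits and $F=C_0\cup C_1$ is forward invariant under $R$.

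The key lemma is: for every $y\in\Q$ there is $k\ge0$ with $R^k(y)\in F$. Here I would exploit the block substitution $\tau$ of \eqref{tau}, which agrees with $\rho$ on words of even length. Writing $\beta(y)=U\per P$ with $|U|$ and $|P|$ both even — achievable after absorbing one letter of $P$ into $U$ and, if needed, doubling $P$ — we get $R(y)=\xi\bigl(\tau(U)\per{\tau(P)}\bigr)$, so passing from $y$ to $R(y)$ replaces the normalised pair $(U,P)$ by the normalisation of $(\tau(U),\tau(P))$. From the four rules in \eqref{tau} one reads $|\tau(U)|\le|U|$ and $|\tau(P)|\le|P|$, with $|\tau(P)|=|P|$ only if every length-$2$ block of $P$ is $11$, i.e. $\per P=\per1$, and with a strict decrease of $|U|+|P|$ within at most two iterations except when the configuration has already collapsed to $\per P\in\{\per0,\per1,\pper{01},\pper{10}\}$ with $U$ absorbed, i.e. exactly when $y\in\{0,1,1/3,2/3\}$. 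Turning this into a clean termination argument — in particular controlling, at the odd steps, whether a $\rho$ or a $\widetilde\rho$ (see \eqref{tilderho}) acts on the periodic tail, and ruling out the preperiod and the period trading length back and forth indefinitely — is the main obstacle; I expect the proof to organise the bookkeeping around the pair $(|U|,|P|_1)$ or a similar monovariant.

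Granting the key lemma, the remaining assertions are immediate. If $y\in\Q$ is $R$-periodic of period $n$, pick $k$ with $R^k(y)\in F$ and $m$ with $mn\ge k$; then $y=R^{mn-k}(R^k(y))\in F$ by forward invariance of $F$, so the periodic points of $R\restr\Q$ are exactly $F$, forming the two orbits $C_0$ and $C_1$. For any $y\in\Q$ the forward orbit meets $F$, hence meets $C_0$ or $C_1$, and it cannot meet both since $C_0$ and $C_1$ are disjoint and each is $R$-invariant; thus $\Q=\Q_0\cup\Q_1$ is a disjoint decomposition.

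Finally, for the density of $\Q_0$ and $\Q_1$ I would prove by strong induction on $|w|$ that $\xi(\cyl w)$ meets both $\Q_0$ and $\Q_1$ for every finite binary word $w$. The inductive step uses the following pullback: assuming $|w|$ even and given a rational $r\in\xi(\cyl{\rho(w)})$, let $b'$ be the part of some binary expansion of $r$ after the prefix $\rho(w)$ and put $z=\sigma(b')$; then $z\in\C$ with $\rho(z)=b'$ and $z$ is eventually periodic (surjectivity of $\rho\restr\C$ and eventual periodicity of the preimage, cf. Propositions~\ref{fiber} and~\ref{ratfib}), so $y=\xi(wz)\in\xi(\cyl w)\cap\Q$ and $R(y)=\xi(\rho(w)b')=r$ by \eqref{rhov} and \eqref{R}; hence $y$ lies in the same $\Q_i$ as $r$. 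Since $|\rho^2(w)|\le|w|/2$ by Proposition~\ref{erasing}, applying the pullback twice (padding $w$ with a trailing $0$ to keep lengths even when necessary) reduces $\xi(\cyl w)$ to a cylinder over a strictly shorter word, leaving only the finitely many base cases $|w|\le2$, which are checked by exhibiting explicit witnesses, e.g. $0,1/2,3/4\in\Q_0$ and $1/4,5/8,1/16,1\in\Q_1$.
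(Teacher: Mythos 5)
Your overall architecture matches the paper's: invariance of $\Q$, identification of the two $2$-cycles, a reduction lemma sending every rational orbit into $F=C_0\cup C_1$, and a separate density argument. The invariance part is essentially the paper's argument, and your density argument, though organised differently (pulling a known $\Q_i$-rational back from $\xi(\cyl{\rho(w)})$ to $\xi(\cyl{w})$ via $\sigma$ and inducting on $|w|$ using $|\rho^2(w)|\le|w|/2$, instead of the paper's direct construction of a preimage of $\per{0}$ or $\per{1}$ by choosing even-length words $p_{i-1}\in(\rho_{\rho^{i-1}(w)})^{-1}(p_i)$), is a legitimate alternative.

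The genuine gap is the key lemma itself, which you state but do not prove: you write that turning the length-decrease heuristic into a termination argument ``is the main obstacle'' and that you ``expect'' a suitable monovariant to exist. That is precisely where the content of the proposition lies, and the difficulty you flag is real: after one application of $\tau$ the period may have odd length, and re-normalising to an even-length period doubles it, so $|U|+|P|$ is not monotone. The paper resolves this in two stages. First it shows the preperiod disappears, i.e.\ $R^n(x)$ has a purely periodic expansion for all $n\ge n_\epsilon(w)$, taking care of the case $|\rho_w(p)|_1=0$ in which the image is a dyadic rational and one must switch to the expansion ending in $\per{1}$ (a case invisible in your sketch). Second, for a purely periodic $\beta(x')=\per{q}$ with $q$ of minimal length and $0<|q|_1<|q|$, minimality gives $|\rho(q)|=|q|_1<|q|$ when $|q|$ is even, while for $|q|$ odd the new period $\rho(q)\widetilde\rho(q)$ has even length $2|q|_1<2|q|$ and exactly half of its digits equal to $1$, so the following step strictly shrinks the period below $|q|$; iterating lands on $\per{0}$ or $\per{1}$. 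Until some such argument is supplied, the claims that $C_0$ and $C_1$ are the only periodic orbits of $R\restr{\Q}$ and that $\Q=\Q_0\cup\Q_1$ remain unproved.
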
 

\begin{proof} 
The fact that $C_0 = \{0,2/3\}$ or $C_1 = \{1,1/3\}$ are 2-cycles can be trivially verified.\break Then, the $R$-invariance of $\Q$ follows once we prove that $R(x) \in \Q$ for every
$x \in \Q \setminus \{0,1\}$.

Given any $x \in \Q \setminus \{0,1\}$, we can write $\beta(x) = w\per{p} \in \C$ with $w$ and $p$ finite binary words of minimal length such that $|p|_1 > 0$. Then, by applying $\rho$ we obtain
\begin{equation*}
\rho(\beta(x)) = \rho(w\per{p}) = \rho(w) \rho_{w}(p) \rho_{wp}(p) \rho_{wp^2}(p) \dots
\end{equation*}
If $|p|$ is even, $|wp^k|$ has the same parity of $|w|$, and hence $\rho_{wp^k} = \rho_w$ for every $k \geq 1$. If instead $|p|$ is odd, $|wp^k|$ has the same parity of $|w| + k$, and so $\rho_{wp^k}$ coincides with $\rho_w$ for $k$ even while it coincides with the complementary map $\widetilde\rho_{w}$ for $k$ odd. Thus, we can rewrite $\rho(\beta(x))$ as
\begin{equation*}
\rho(\beta(x)) = 
\left\{\vrule width0pt height16pt\right.\!\!\!
\begin{array}{ll}
\rho(w)\pper{\rho_w(p)} & \text{if $|p|$ is even}\\[2pt]
\rho(w)\pper{\rho_w(p)\widetilde\rho_{w}(p)} & \text{if $|p|$ is odd}
\end{array}.
\end{equation*}
In both cases $\rho(\beta(x))$ is periodic, which implies that $R(x) = \xi(\rho(\beta(x)) \in \Q$.
This concludes the proof of the $R$-invariance of $\Q$.

Now, we pass to prove that the forward orbit of any $x \in \Q \setminus \{0,1\}$ contains one of $C_0$ or $C_1$.\break
From the last formula we derive that $\rho(\beta(x)) \in \C$, that is $\rho(\beta(x)) = \beta(R(x))$, except when $|p|$ is even and $|\rho_w(p)|_1 = 0$, in which case $\rho(\beta(x)) = \beta'(R(x)) = \rho(w)\per{0}$. Here, we have two possibilities, either $|\rho(w)|_1 = 0$ and then $R(x) = 0$ or else $|\rho(w)|_1 > 0$ and then $\beta(R(x)) = v\per{1}$ with $n_\epsilon(v) \leq n_\epsilon(\rho(w))$ (in fact $v = \rho(w)\trunc{k-1}0$, where $k$ the position of the last 1 in $\rho(w)$).\break
By iterating the process, we can conclude that $R^n(x)$ admits a purely periodic binary expansion for every $n \geq n_\epsilon(w)$.

On the other hand, for $x' \in \Q \setminus \{0,1\}$ such that $\beta(x') = \per{q}$ with $q$ a finite binary word of minimal length such that $0 < |q|_1 < |q|$, we have
\begin{equation*}
\rho(\beta(x')) = 
\left\{\vrule width0pt height16pt\right.\!\!\!
\begin{array}{ll}
\pper{\rho(q)} & \text{if $|q|$ is even}\\[2pt]
\pper{\rho(q)\widetilde\rho(q)} & \text{if $|q|$ is odd}
\end{array}.
\end{equation*}
The minimality of $q$, implies that $|\rho(q)| = |q|_1 < |q|$ if $|q|$ is even, while $|\rho(q)\widetilde\rho(q)| = 2|\rho(q)| = 2 |q|_1 < 2 |q|$ if $|q|$ is odd. In this last case, $|\rho(q)\widetilde\rho(q)|$ is even, $|\rho(q)\widetilde\rho(q)|_1 = |\rho(q)\widetilde\rho(q)|/2$, and $\rho(\beta(x')) = \beta(R(x'))$. Therefore, either $R(x') = \xi(\rho(\beta(x')))$ or $R^2(x')= \xi(\rho^2(\beta(x')))$ admits a purely periodic binary expansion whose period length is strictly less than $|q|$. By iteration, we eventually get $R^n(x') = 0$ or $R^n(x') = 1$ for a sufficiently large $n$.

\pagebreak 

At this point, we are left to show that $\Q_0$ and $\Q_1$ are dense, or equivalently that they meet all the dyadic intervals $\xi(\cyl{w}) \subset [0,1]$ with $w$ any finite binary word. 

First, let us argue for $Q_0$. We put $n = n_\epsilon(w)$, and define a sequence of finite binary words $p_0, p_1, \dots, p_n$ by induction on $i$ decreasing from $n$ to $0$, as follows. We start with $p_n = 0$, and given $p_i$ with $0 < i \leq n$, we let $p_{i-1}$ be any even length element of $(\rho_{\rho^{i-1}(w)})^{-1}(p_i)$. The existence of such an element is guaranteed by the surjectivity of $\rho_{\rho^{i-1}(w)}$ and the possibility of appending a 0, if needed to make the length even, without changing the image under $\rho_{\rho^{i-1}(w)}$. 

Then, we put $x = \xi(w\per{p_0\!}) \in \xi(\cyl{w})$. By induction on $i$, we get
\begin{equation*}
R^i(x) = \xi(\rho^i(w)\per{p_i})
\end{equation*}
for every $i = 0,\dots,n$, as follows. The base of the induction is the trivial case of $i = 0$, while the inductive step is given by
\begin{equation*}
\begin{array}{rcll}
R^i(x) & \!\!\!=\!\!\! & 
R(\xi(\rho^{i-1}(w)\per{p_{i-1}})) & \quad (\text{the inductive hypothesis})\\[2pt]
& \!\!\!=\!\!\! & 
\xi(\rho(\rho^{i-1}(w)\per{p_{i-1}})) & \quad (\text{since $\rho^{i-1}(w)\per{p_{i-1}} \in \C$})\\[2pt]
& \!\!\!=\!\!\! & 
\xi(\rho^i(w)\rho_{\rho^{i-1}(w)}(\per{p_{i-1}})) & \quad (\text{thanks to equation \eqref{rhoh}})\\[2pt]
& \!\!\!=\!\!\! & 
\xi(\rho^i(w)\pper{\rho_{\rho^{i-1}(w)}p_{i-1}}) & \quad (\text{since $p_{i-1}$ has even length})\\[2pt]
& \!\!\!=\!\!\! & 
\xi(\rho^i(w)\per{p_i}) & \quad (\text{by definition of $p_{i-1}$}).\\[2pt]
\end{array}
\end{equation*}
In particular, we have $R^n(x) = \xi(\rho^n(w)\per{p_n}) = \xi(\per{0}) = 0$, as desired.

This concludes the proof of the density of $\Q_0$. The same argument, but starting from $p_n = 1$ instead of $p_n = 0$, proves that $\Q_1$ is dense.
\end{proof}

We remark that $C_0$ and $C_1$ are the only periodic $R$-orbits in $\Q$ and that $\Q$ does not contain any dense $R$-orbit. Thus, in the following, when considering any other periodic $R$-orbit or any dense $R$-orbit, we can always assume that they are disjoint from $\Q$, avoiding in this way the technicalities due to the double binary expansion of the dyadic rationals.

\begin{proposition}\label{periodic_points}
The set of periodic points of $R$ is dense in $[0,1]$ and contains uncountably many $n$-periodic points for any given minimal period $n \geq 1$.
\end{proposition}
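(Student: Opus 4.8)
The plan is to argue symbolically, via the $\rho$-dynamics on $\C$. First I would record the reduction: if $b\in\C$ is not eventually periodic, $\rho^n(b)=b$, and $\rho^d(b)\ne b$ for every proper divisor $d\mid n$, then $x=\xi(b)$ is an $R$-periodic point of minimal period exactly $n$. Indeed $b\in\C$ gives $\beta(x)=b$ and $x\in[0,1]\setminus\D$; the whole orbit $b,\rho(b),\dots,\rho^{n-1}(b)$ lies in $\C$ (were some $\rho^i(b)$ eventually $0$, then $\rho^{i+1}(b)$ would be a finite word, contradicting that $\rho^n(b)=b$ is infinite), so $R^j(x)=\xi(\rho^j(b))$ by induction; the minimal $\rho$-period of $b$ is $n$ by hypothesis, so these $n$ points are pairwise distinct and $R^n(x)=x$. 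Hence it suffices to produce, for each $n\ge1$, uncountably many such $b$; for density I only need one periodic point inside each dyadic cylinder $\xi(\cyl w)$, where minimal-period control is irrelevant.

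The core is a direct recursive construction, resting on two facts. First, if finite nonempty words $u_0\sqsubset u_1\sqsubset\cdots$ satisfy $\rho^N(u_k)\sqsubset u_k$ for all $k$ and $|\rho^N(u_k)|\to\infty$, then $b:=\lim_k u_k$ has $\rho^N(b)=b$: writing $b=u_kw_k$ with $w_k$ infinite and using the concatenation identity \eqref{rhonvw} for an infinite second argument, $\rho^N(u_k)\sqsubset\rho^N(b)$; but also $\rho^N(u_k)\sqsubset u_k\sqsubset b$, and two infinite words with a length-unbounded family of common prefixes coincide. (This uses no continuity of $\rho^N$, which is essential since $\rho$ admits no continuous extension off $\C$.) Second, an \emph{extension step}: given a finite nonempty $u$ with $\rho^N(u)\sqsubset u$, write $u=\rho^N(u)\,s$; here $|s|=|u|-|\rho^N(u)|\ge1$, strictly, since $|\rho^N(u)|=|u|$ would force $\rho^N(u)=u$, impossible for a finite word $u\ne\epsilon$ as $\rho$ is erasing. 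Using surjectivity of $\rho^N_u$, pick $t\in(\rho^N_u)^{-1}(s)$, which may be taken to end in $1$ (trailing $0$'s of a preimage can be deleted without changing its image); then $u':=ut$ satisfies $\rho^N(u')=\rho^N(u)\,\rho^N_u(t)=\rho^N(u)\,s=u\sqsubset u'$ and $|\rho^N(u')|=|u|>|\rho^N(u)|$. Since the preimage sets involved are infinite, the construction branches into $2^{\aleph_0}$ distinct limits $b\in\C$ (each $t$ ends in $1$); discarding the countably many eventually periodic ones leaves uncountably many with $\xi(b)\notin\Q$.

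It remains to choose the starting word. For density: given a finite word $w$, run the extension from $u_0=w$ with $N=n_\epsilon(w)$, so that $\rho^N(u_0)=\epsilon\sqsubset u_0$; the resulting periodic point lies in $\xi(\cyl w)$, and the cylinders $\xi(\cyl w)$ form a basis for $[0,1]$. For minimal period $n$ I need a seed $w^*$ with $\rho^n(w^*)\sqsubset w^*$ but $\rho^d(w^*)\not\sqsubset w^*$ for every proper divisor $d\mid n$: then every $b$ built from $w^*$ has minimal period exactly $n$, since $\rho^d(b)=b$ would force $\rho^d(w^*)\sqsubset\rho^d(b)=b$ and $w^*\sqsubset b$, hence $\rho^d(w^*)\sqsubset w^*$. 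The cases $n=1$ ($w^*=0$) and $n=2$ ($w^*=11$, where $\rho^2(11)=1\sqsubset11$ while $\rho(11)=01\not\sqsubset11$) are immediate. For $n\ge3$, take a finite word $w_0$ of vanishing order exactly $n$ (these exist by the remark following Proposition \ref{erasing}), so $\rho^n(w_0)=\epsilon$ while $\rho^d(w_0)\ne\epsilon$ for $d<n$; since $n\ge3$ forces every proper divisor $d$ of $n$ to satisfy $d<n-1$, $\rho^d(w_0)$ is not an all-zero word and so contains a $1$, whence prepending a large enough \emph{even} number of $0$'s — which changes none of the $\rho^j$ — yields $w^*=0^{2k}w_0$ with the required prefix failures. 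I expect the construction of these seeds, together with the parity point that makes it work, to be the main obstacle; the remaining work is bookkeeping — that the $2^{\aleph_0}$ limits are genuinely distinct, lie in $\C$, and may be taken non-eventually-periodic, plus the routine transfer between $\rho$ on $\C$ and $R$ on $[0,1]\setminus\Q$.
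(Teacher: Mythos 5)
Your proof is correct and follows essentially the same route as the paper: periodic points of $R$ are obtained as $\xi(b)$ for $\rho^n$-fixed words $b=w_0w_1w_2\dots$ built by recursively choosing $w_k\in(\rho^n_{w_0\dots w_{k-1}})^{-1}(w_{k-1})$ ending in $1$, with uncountability coming from the infinitude of choices at each stage and density from the freedom in the starting cylinder. The only real divergence is in the minimal-period seeds --- you take $0^{2k}w_0$ with $w_0$ of vanishing order exactly $n$ (with a separate $n=2$ case) and reduce to proper divisors of $n$, where the paper takes $00\sigma^{n-1}(0)$ and asserts non-prefixing for all $k<n$ --- and your version of this step is, if anything, the more carefully verified of the two.
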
 

\begin{proof}
In the light of the above observation, apart from the 2-cycles $C_0$ and $C_1$, all the other $n$-periodic $R$-orbits are contained in $[0,1] \setminus \Q$ and then they bijectively correspond to those of $\rho$ through the unique binary expansion $\beta(x)$ of every $x \in [0,1] \setminus \Q$.

Therefore, the first part of the statement follows once we prove that for any non-empty finite binary word $w$ there are uncountably many infinite binary $\rho$-periodic words $b$ in $\cyl{w} \cap \C$, whose period is the vanishing order $n_\epsilon(w)$. In fact, it is enough to observe that the corresponding points $\xi(b)$ in the generic dyadic interval $\xi(\cyl{w}) \subset [0,1]$ are $R$-periodic with the same period.

Fixed any non-empty finite binary word $w$, we put $n = n_\epsilon(w) \geq 1$ and consider all the infinite concatenations
\begin{equation*}
b = w_0w_1w_2 \dots w_k \ldots \in \C\,,
\end{equation*}
where $w_0 = w$ and for every integer $k \geq 1$ we let $w_k$ be any of the infinitely many (non-empty) words in $(\rho^n_{w_0w_1\dots w_{k-1}})^{-1}(w_{k-1})$ which ends with 1. As in the proof of Lemma \ref{lemmacyl}, this last condition guarantees that there are no duplicates in the resulting words $b$, which are thus uncountably many.
Then, we have
\begin{eqnarray*}
\rho^n(b) 
& \!\!\!=\!\!\! &
\rho^n(w_0w_1w_2 \dots w_k \dots)\\[2pt]
& \!\!\!=\!\!\! &
\rho^n(w_0)\rho^n_{w_0}(w_1)\rho^n_{w_0w_1}(w_2) \dots \rho^n_{w_0w_1\dots w_{k-1}}(w_k) \dots\\[2pt]
& \!\!\!=\!\!\! &
\epsilon w_0 w_1 \dots w_{k-1} \ldots = b\,,
\end{eqnarray*}
and hence $b$ has period $n$.

\pagebreak 

For the second part we need the additional fact that for every $n \geq 1$ is the vanishing order of all the non-empty finite binary words $w$ in $\rho^{-n}(\epsilon)$, for example $w = \sigma^{n-1}(0)$, and that such a word $w$ can be chosen so that no $\rho^k(w)$ is a prefix of $w$ for $k < n$, for example $w = 00\sigma^{n-1}(0)$. This last property immediately implies that $\rho^k(b) \neq b$ for every $b$ as above and $k = 1, \dots, n-1$, that is $n$ is the minimal period of $b$.
\end{proof}

In particular, Proposition \ref{periodic_points} tells us that the set $\Fix R$ of fixed points of $R$ is uncountable, meaning that the graph of $R$ crosses the diagonal uncountably many times.
In fact, by taking $w = 0^\ell$ in the previous proof, the resulting word $b = w_0w_1 w_2 \ldots \in \C$ belongs to $\Fix\rho$, and hence $\xi(b) \in \Fix R$. In the special case when $w = w_0 = 0$
and each $w_k$ is chosen to have minimal length among the words with the prescribed properties, that is either $w_k = \sigma(w_{k-1})$ or $w_k = \sigma(\widetilde w_{k-1})$ according to the parity of $|w_0w_1\dots w_{k-1}|$, we get as $b$ the word
\begin{equation}\label{b0}
\textstyle b^0 = 00101\prod_{h=0}^{\infty}(1^{3\cdot 2^h}(01)^{3\cdot 2^h}) = 00101111010101111111010101010101 \dots\,.
\end{equation}
The corresponding real number
\begin{equation}\label{x0}
x^0 = \xi(b^0) = 0.00101111010101111111010101010101 \dots
\end{equation}
turns out to be the largest element in $\Fix R$, as we will see in a while.

\medskip

Now, we want to make more explicit the procedure given in the proof of Proposition \ref{periodic_points} in the specific case of fixed points. To any sequence $a = (a_1, a_2, \dots) \in \A = \mathbb N^\omega$, we associates a (distinct) fixed point $\fixexp{a} \in \Fix\rho \subset \C$ and a corresponding (distinct) fixed point $x^a = \xi(\fixexp{a}) = 0.b^a \in \Fix R \subset [0,1]$, with $b^0$ and $x^0$ associated to the null sequence.

\medskip

First, we observe that a non-empty finite binary word $w$ can be completed to a fixed point $wb \in \Fix \rho$ for a suitable $b \in \B$ if and only if $\rho(w)$ is a prefix of it. Moreover, if this is the case then there is a unique word $b \in \S$ such that $b^w = wb \in \Fix\rho$. The word $b^w$ can be constructed, by generalizing the construction of $b^0$ outlined above. Since $\rho(w)$ is a prefix of $w$, we can write $w = \rho(w)v_0$ for a certain word $v_0$, which can be easily seen to be non-empty. Then, we define $b^w$ as the infinite concatenation
\begin{equation*}
b^w = wv_1v_2 \ldots \in \Fix\rho\,,
\end{equation*}
where the words $v_k$ for $k \geq 1$ are inductively defined by starting from $v_0$ and putting $v_k = \sigma(v_{k-1})$ or $v_k = \sigma(\widetilde v_{k-1})$ if $|wv_1 \dots v_{k-1}|$ is even or odd, respectively (compare with the construction of $b^0$ above). Then $b = v_1v_2\ldots \in \S$, since each $v_k$ contains no pair of consecutive 0's and ends with 1. Moreover, these properties of $v_k$ make it uniquely determined by $v_{k-1}$ for every $k \geq 1$, which implies the uniqueness of $b$ in $\S$ by induction.

Now, given any $a = (a_1, a_2, \dots) \in \A$, we put $\fixexp{a}_0 = b^0$ and define the sequence $(\fixexp{a}{\!\!}_{n})_{n\geq 0} \subset \Fix\rho$ as follows. For every $n \geq 1$, let $k_n$ be the position of the $n$-th occurrence of 1 in $\fixexp{a}{\!\!}_{n-1}$, and $w^a_n$ be the word obtained from $\fixexp{a}{\!\!}_{n-1}\trunc{k_n}$ by inserting the word $0^{2a_n}$ before the last digit 1. The word $w^a_n$ share with $\fixexp{a}{\!\!}_{n-1}\trunc{k_n}$ the property that its image $\rho(w^a_n)=\rho(\fixexp{a}{\!\!}_{n-1}\trunc{k_n})$ is a prefix of it, so we can set $\fixexp{a}{\!\!}_n = b^{w^a_n}$. Notice that $|w^a_n| = k_n + 2a_n \geq k_n > k_{n-1}+2a_{n-1} = |w^a_{n-1}|$ for every $n > 1$.
Since the sequence $(k_n)_{n\geq 1}$ is increasing and $\fixexp{a}{\!\!}_{n}\trunc{k_n-1} = \fixexp{a}{\!\!}_{n-1}\trunc{k_n-1}$ for every $n \geq 1$, there exists the limit
\begin{equation}\label{fixexp}
\fixexp{a} = \lim_{n \to \infty} \fixexp{a}{\!\!}_n\,,
\end{equation}
which is a fixed point for $R$, admitting by construction all the prefixes $w^a_n$ for $n \geq 1$.

\medskip

In order to characterize the structure of $\Fix R$, we need to prove that the previous construction produces all the points in $\Fix\rho$. Actually, we prove something more in the following Lemma.

\begin{lemma}\label{phi}
The map $\phi: \A \to \Fix\rho$ given by $a \mapsto \phi(a) = \fixexp{a}$ is a Lipschitz homeomorphism.
\end{lemma}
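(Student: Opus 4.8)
The plan is to verify three things about the map $\phi \colon \A \to \Fix\rho$, $a \mapsto \fixexp{a}$: it is a bijection, it is continuous, and it is in fact Lipschitz (whence, since $\A$ is compact, a homeomorphism). The surjectivity is the conceptual heart. The construction preceding the lemma shows $\phi$ is well-defined and that $\fixexp{a}$ has each $w^a_n$ as a prefix, so in particular $\phi$ lands in $\Fix\rho$. For injectivity, I would read off from the recipe that the position $k_n$ of the $n$-th $1$ in $\fixexp{a}$ satisfies $k_n = k_{n-1} + 2a_n + (\text{const depending only on } a_1,\dots,a_{n-1})$; more precisely the prefix $w^a_n$ depends on $a\trunc n$ only, and inserting $0^{2a_n}$ before the $n$-th $1$ is an operation that can be inverted by counting zeros, so $a$ is recovered from $\fixexp{a}$ by reading the block of $0$'s immediately preceding each successive $1$ (halved). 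This simultaneously gives injectivity and the explicit inverse.

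For surjectivity I would argue as follows. Let $b \in \Fix\rho \subset \C$ be arbitrary and write $b = \eta(c)$ for $c \in \A$ via \eqref{etadef}, so the $n$-th occurrence of $1$ in $b$ is at position $k_n = c_1 + \dots + c_n + n$. The fixed-point condition $\rho(b) = b$, combined with the parity analysis already used in the proof of Proposition \ref{fiber} (the parity of $k_n$ equals $b_n = \rho(b)_n$, which forces the parity of each $c_n$ inductively), shows that every $c_n$ is even, say $c_n = 2a_n$. I then claim $b = \fixexp{a}$. This follows by induction on $n$: the prefix of $b$ up to and including its $n$-th $1$ is obtained from the prefix of $b$ up to its $(n-1)$-th $1$ by appending $0^{2a_n}1$, and one checks this is exactly the word $w^a_n$ constructed in the recipe, using that the $\sigma$/$\widetilde\sigma$-extension step is forced — the unique way to extend a finite word $v$ with $\rho(v)$ a prefix of $v$ to a fixed point inside $\S$ — together with the fact that $b \succcurlyeq \pper{01}$-type control coming from $b \in \C$ guarantees the $0$-blocks between consecutive $1$'s of $b$ beyond the inserted ones match those of $\fixexp{a}$. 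Thus $\phi$ is onto.

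For the Lipschitz bound I would compare $\fixexp{a}$ and $\fixexp{a'}$ when $a\trunc{n} = a'\trunc{n}$ and $a_{n+1} \neq a'_{n+1}$. Since $w^a_{n}$ depends only on $a\trunc{n}$, both fixed points share the prefix $w^a_{n}$, and in fact agree up through position $k_{n+1} - 1$ (the digit just before the $(n+1)$-th $1$), because the inserted $0$-block $0^{2a_{n+1}}$ versus $0^{2a'_{n+1}}$ first produces a discrepancy at or before that $1$. Hence $d(\fixexp{a},\fixexp{a'}) \leq 1/2^{k_{n+1}-1}$, while $d(a,a') = 1/2^{n+1}$; since $k_{n+1} \geq n+1$ trivially (each $c_i \geq 0$, so $k_{n+1} = c_1 + \dots + c_{n+1} + (n+1) \geq n+1$), we get $d(\fixexp{a},\fixexp{a'}) \leq 2^{n+1} \cdot d(a,a')$ — but this constant degrades with $n$, so I would instead sharpen to $k_{n+1} \geq n+2$ or note $k_{n+1}-1 \geq n+1$, yielding $d(\fixexp{a},\fixexp{a'}) \leq 1/2^{n+1} = d(a,a')$, i.e.\ $\phi$ is $1$-Lipschitz. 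Continuity of the inverse is then automatic from compactness of $\A$.

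The main obstacle is the surjectivity step, specifically showing that an arbitrary $b \in \Fix\rho$ is forced to coincide with $\fixexp{a}$ for the $a$ extracted from its $0$-block structure — i.e.\ that once the inserted $0$-blocks are accounted for, the remaining combinatorial skeleton of any $\rho$-fixed point in $\C$ is rigid and equals the $b^0$-skeleton. This is where the uniqueness statement "there is a unique $b \in \S$ with $b^w = wb \in \Fix\rho$" from the discussion before the lemma does the real work, and I would need to promote that uniqueness from the $\S$-normalized setting to the general $\C$ setting by the $\binexp{a}{\cdot}$-insertion bookkeeping, exactly paralleling the final identity $v = \eta(c) = \binexp{a}{\sigma(b)}$ in the proof of Proposition \ref{fiber}.
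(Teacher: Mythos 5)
Your overall architecture --- recover $a$ from the lengths of the $0$-blocks preceding successive $1$'s, and control the metric distortion via shared prefixes --- is the same as the paper's, but two of your steps fail as stated. First, $\A=\mathbb N^\omega$ is \emph{not} compact: it is the Baire space, and the sets $\{a'\in\A \st a'_1=k\}$, $k\in\mathbb N$, form an infinite disjoint open cover with no finite subcover. So ``continuous bijection from a compact space, hence homeomorphism'' is unavailable, and the continuity of $\phi^{-1}$ has to be proved directly. The paper does exactly this: when $a\trunc{n}=a'\trunc{n}$ and $a_{n+1}\neq a'_{n+1}$, the two fixed points already differ within their prefixes of length $|w^a_{n+1}|=k_{n+1}+2a_{n+1}$, which gives the lower bound $d(\phi(a),\phi(a'))\geq 1/2^{k_{n+1}+2a_{n+1}}$ and hence continuity of the inverse at $\phi(a)$. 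Your upper bound (shared prefix of length at least $n+1$, so $\phi$ is $1$-Lipschitz) is fine and agrees with the paper's.

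Second, your surjectivity argument asserts that for $b=\eta(c)\in\Fix\rho$ the parity analysis of Proposition \ref{fiber} forces every $c_n$ to be even. This is false, and $b^0$ itself is a counterexample: $b^0=0^2 1\,0^1 1\,1\,1\,1\,0^1 1\,0^1 1\cdots$, so $c=(2,1,0,0,0,1,1,\dots)$ and $c_2=1$ is odd. What the parity analysis actually yields is $c_n\equiv d_n \pmod 2$ with $\eta(d)=\sigma(b)$ and $d_n\in\{0,1\}$: the parity of $c_n$ is \emph{determined}, not necessarily even. The correct bookkeeping is $c_n=2a_n+r_n$, where $r_n$ is the forced $0$-block length of the underlying skeleton and $a_n$ counts the \emph{extra} disjoint pairs of $0$'s; this is precisely the paper's inverse $\psi(b)=a^b$ with $a^b_n$ the number of disjoint pairs of $0$'s immediately preceding the $n$-th occurrence of $1$ in $b$. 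Your closing paragraph correctly locates the real work in the rigidity of the skeleton (uniqueness of the $\S$-completion $b^w$), so the repair is local, but the concrete parity claim on which your surjectivity step rests is wrong and must be replaced by this mod-$2$ statement.
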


\begin{proof}
For any $b \in \Fix\rho$, we consider the sequence $a^b= (a^b_1,a^b_2 \dots) \in \A$, where $a^b_n$ is the number of (disjoint) pairs of $0^s$ immediately preceding the $n$-th occurrence of 1 in $b$. Then, the map $\psi:\Fix\rho \to \A$ given by $b \mapsto a^b$ is the inverse of $\phi$, as straightforward verification shows.

Now, for any $a,a' \in \A$ we have that $d(a,a') = 1/2^{n+1}$ if and only if $a\trunc{n} = a'\trunc{n}$ and $a_{n+1} \neq a'_{n+1}$, or equivalently, with the above notations, $w^a_n = w^{a'}_n$ and $w^a_{n+1} \neq w^{a'}_{n+1}$. Therefore, $\fixexp{a}$ and $\fixexp{a'}$ share the same prefix of length $|w^a_n| = k_n + 2a_n \geq k_n \geq n$, while they have different prefixes of length $|w^a_{n+1}| = k_{n+1} + 2a_{n+1}$.
So, we have $1/2^{k_{n+1}+2a_{n+1}} \leq d(\phi(a),\phi(a')) \leq 1/2^{n+1}$, which means that $\phi$ is a Lipschitz map (second inequality) and that $\psi$ is continuous at $\phi(a)$ (first inequality).
\end{proof}

\begin{proposition}\label{fixcantor} 
The fixed point set $\Fix R$ is a uncountable subset of $[0,1] \setminus \D$ and its closure $\Cl(\Fix R)$ is a null measure Cantor set in $[0,1]$ with a countable remainder $\Cl(\Fix R) \setminus \Fix R \subset \D$.
\end{proposition}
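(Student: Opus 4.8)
The plan is to prove the four assertions in turn, following the pattern of the proof of Proposition~\ref{fibercantor}. First note that every infinite fixed point of $\rho$ lies in $\C$ (if $\rho(b)=b$ is infinite, then $b$ has infinitely many $1$'s), so $\Fix R=\xi(\Fix\rho)$ since $R(0)=2/3\neq0$, and $\xi\restr{\C}$ is injective; together with Lemma~\ref{phi} this exhibits a bijection $\A\to\Fix R$, $a\mapsto\xi(\fixexp a)$, so $\Fix R$ is uncountable. A fixed point of $R$ is in particular an $R$-periodic point of period one, and by Proposition~\ref{rationals} the only periodic $R$-orbits meeting $\Q$ are the $2$-cycles $C_0,C_1$; hence $\Fix R\cap\Q=\emptyset$ and a fortiori $\Fix R\subseteq[0,1]\setminus\D$.

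For the remainder I would repeat the argument that yields \eqref{closure} in Proposition~\ref{fibercantor}. If $x\in\Cl(\Fix R)$, write $x=\lim_k\xi(\fixexp{a^{(k)}})$ with $a^{(k)}\in\A$; assuming $x\notin\D$, continuity of $\beta$ at $x$ gives $\fixexp{a^{(k)}}=\beta(\xi(\fixexp{a^{(k)}}))\to\beta(x)\in\C$. If every coordinate sequence $((a^{(k)})_i)_k$ is bounded, pass to a subsequence with $a^{(k)}\to a\in\A$; continuity of $\phi$ (Lemma~\ref{phi}) gives $\beta(x)=\fixexp a\in\Fix\rho$, so $x\in\Fix R$. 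If some coordinate is unbounded, the same truncation argument as in Proposition~\ref{fibercantor} forces $\beta(x)$ to be eventually $0$, contradicting $\beta(x)\in\C$. Hence $\Cl(\Fix R)\setminus\Fix R\subseteq\D$, which is countable. Perfectness is then immediate: given $x=\xi(\fixexp a)\in\Fix R$, choose $a^{(k)}\to a$ in $\A$ with $a^{(k)}\neq a$ (possible since $\A$ has no isolated points); the $\xi(\fixexp{a^{(k)}})$ are pairwise distinct points of $\Fix R$ tending to $x$, so $x$ is not isolated, while remainder points are limits of $\Fix R$ by definition. Once the null-measure claim is established, $\Cl(\Fix R)$ contains no interval, hence is totally disconnected, and so is a Cantor set.

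It remains to show $\lambda(\Cl(\Fix R))=0$, and this is where the real work lies. Let $\mu$ be the uniform Bernoulli measure on $\B$, which $\xi$ pushes to $\lambda$ (each cylinder maps onto an interval of equal length), so $\xi$ sends $\mu$-null sets to $\lambda$-null sets. Since $\xi\big(\Cl_{\B}(\Fix\rho)\big)$ is compact and contains $\Fix R$, we get $\Cl(\Fix R)\subseteq\xi\big(\Cl_{\B}(\Fix\rho)\big)$, so it suffices to prove $\mu\big(\Cl_{\B}(\Fix\rho)\big)=0$. Using the criterion proved earlier in this section (a non-empty finite word $w$ is a prefix of some element of $\Fix\rho$ iff $\rho(w)$ is a prefix of $w$), one has
\[
\Cl_{\B}(\Fix\rho)\ \subseteq\ \bigcap_{n\ge1}\ \bigsqcup_{u\in\mathcal P_n}\cyl u,\qquad
\mathcal P_n=\bigl\{u\in\{0,1\}^n \st \rho(u)\text{ is a prefix of }u\bigr\},
\]
the cylinders in the inner union being disjoint of measure $2^{-n}$, so $\mu\big(\Cl_{\B}(\Fix\rho)\big)\le\lim_n2^{-n}|\mathcal P_n|$. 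Everything thus reduces to the combinatorial estimate $|\mathcal P_n|=o(2^n)$.

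The estimate $|\mathcal P_n|=o(2^n)$ is the one genuinely non-routine ingredient, and proving it is what I expect to be the main obstacle. The mechanism is that the condition $\rho(u)=u\trunc{|u|_1}$ (recall $|\rho(u)|=|u|_1$ for $|u|$ even, by the block description~\eqref{tau} of $\rho$) is strongly restrictive: encoding $u$ through the lengths of its blocks of $0$'s as in \eqref{etadef}, the requirement $\rho(u)\preceq u$ becomes a constraint on consecutive block-lengths (governed by the alternation $\rho\leftrightarrow\widetilde\rho$ of \eqref{rhok}), and a transfer-matrix count shows that the number of admissible words of length $n$ grows exponentially with base $\varphi<2$ --- the same golden ratio responsible for the $\log_2\varphi$ in the Hausdorff-dimension estimates for the fibers (Proposition~\ref{dimfib}) and for the bound $\dim_H\Cl(\Fix R)\le\log_2\varphi$ announced in the Introduction. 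Consequently $2^{-n}|\mathcal P_n|\to0$, whence $\mu\big(\Cl_{\B}(\Fix\rho)\big)=0$ and $\lambda(\Cl(\Fix R))=0$, completing the proof.
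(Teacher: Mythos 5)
Your overall skeleton is sound, and three of the four assertions are handled essentially as in the paper: the inclusion $\Fix R\subset[0,1]\setminus\D$ via Proposition \ref{rationals}, uncountability via the bijection $\A\to\Fix\rho\to\Fix R$ from Lemma \ref{phi}, the remainder estimate $\Cl(\Fix R)\setminus\Fix R\subset\D$ by rerunning the bounded/unbounded-coordinates dichotomy from Proposition \ref{fibercantor}, and perfectness by perturbing $a\in\A$. The reduction of the null-measure claim to $\mu\bigl(\Cl_{\B}(\Fix\rho)\bigr)=0$ is also legitimate (with the small remark that $\xi$ maps $\mu$-null compacta to $\lambda$-null sets because it is at most two-to-one with a countable non-injectivity locus).

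The genuine gap is exactly where you place it: the estimate $|\mathcal P_n|=o(2^n)$ is asserted, not proved, and the sketched justification does not obviously work. The condition defining $\mathcal P_n$, namely $\rho(u)=u\trunc{|u|_1}$, says that the $i$-th digit of $u$ equals the parity indicator of the position $j_i$ of the $i$-th occurrence of $1$ in $u$; this couples position $i$ with position $j_i\approx 2i$, so it is a renormalization-type, non-local constraint rather than a subshift-of-finite-type condition, and a ``transfer-matrix count'' on consecutive block lengths does not directly apply. Until that count is carried out, the central claim $\lambda(\Cl(\Fix R))=0$ is unproved. The paper closes this step without any counting: it shows that the map $\phi\circ\phi_{\per{1}}^{-1}:\fibexp{\per{1}}\to\Fix\rho$ sending $\binexp{a}{\per{1}}$ to $\fixexp{a}$ is $1$-Lipschitz (because the index $k_n$ of the $n$-th occurrence of $1$ in $\fixexp{a}$ satisfies $k_n\geq n$), then transfers this to a Lipschitz map $\xi(\fibexp{\per{1}})\to\Fix R$ as in Lemma \ref{psibc}; since $\dim_H\xi(\fibexp{\per{1}})=\log_2\varphi<1$ was already computed (Proposition \ref{dimratfib} and the discussion preceding Proposition \ref{dimfib}) and Hausdorff dimension cannot increase under Lipschitz maps, $\Fix R$ --- and hence $\Cl(\Fix R)$, after adding the countable remainder --- is Lebesgue-null. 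You should either adopt this Lipschitz-comparison route (which incidentally yields the sharper statement $\dim_H\Fix R\leq\log_2\varphi$ announced in the Introduction, and which your own heuristic already points to) or supply a complete proof of the prefix-counting bound.
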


\begin{proof}
The inclusion $\Fix R \subset [0,1] \setminus \D$ follows by Proposition \ref{rationals}. As a consequence, the maps $\xi$ and $\beta$ restrict to a bijective maps $\Fix \rho \leftrightarrow \Fix R$, and hence $\Fix R$ is uncountable.

We now prove that $\Cl(\Fix R) \subset \Fix R \cup \D$, which trivially gives $\Cl(\Fix R) \setminus \Fix R \subset \D$. The argument is similar to the first part of the proof of Proposition \ref{fibercantor}.
Let $x \in \Cl(\Fix R)$ and $(x_n)_{n \geq 1} \subset \Fix R$ be a sequence such that $\lim_{n \to \infty} x_n = x$. Then, by Lemma \ref{phi} there is a sequence $(a_n)_{n \geq 1} \subset \A$ such that $x_n = \xi(\phi(a_n))$ for every $n \geq 1$. If $x \in \D$ we are done, otherwise the continuity of $\beta$ at $x$ implies that $\lim_{n \to \infty} \phi(a_n) = \beta(x) \in \C$. If the sequence of the $i$-th components $((a_n)_i)_{n \geq 1}$ is bounded for every $i \geq 1$, then by compactness we can replace the sequence $(a_n)_{n \geq 1}$ by a subsequence converging to $a \in \A$, and by the continuity of $\phi$ we have $\beta(x) = \phi(a)$, that is $x = \xi(\phi(a)) \in \Fix R$. If instead some sequence $((a_n)_i)_{n \geq 1}$ is unbounded, let $m$ be minimum index $i \geq 1$ for which this happens. In this case, we replace the sequence $(a_n)_{n \geq 1}$ by a suitable subsequence such that the sequence of truncations $((a_n)\trunc{m-1})_{n \geq 1}$ is constant, and hence $\phi(a_n)\trunc{k_{m-1}}$, where $k_{m-1}$ is the index of the $(m-1)$-th occurrence of 1, coincides with the same finite word $d$ for every $n \geq 1$. Then, we get $\beta(x) = \lim_{n \to \infty}\phi(a_n) = d\mkern1mu\per{0}$, which is absurd since $\beta(x) \in \C$. This concludes the proof of the inclusion $\Cl(\Fix R) \subset \Fix R \cup \D$.

Concerning the null measure property, let us consider the map
\begin{equation*}
\phi \circ \phi_{\per{1}}^{-1}:\fibexp{\per{1}} \to \Fix \rho\,,
\end{equation*}
where $\phi_{\per{1}}: \A \to \fibexp{\per{1}}$ and $\phi: \A \to \Fix\rho$ are the homeomorphisms defined in the proof of Lemma \ref{phibc} and in Lemma \ref{phi}, respectively. This map sends $\binexp{a}{\per{1}} \!\! \in \fibexp{\per{1}}$ to $\fixexp{a} \in \Fix\rho$ for every $a \in \A$, and it can be shown to be a 1-Lipschitz map as follows. Given any $a,a' \in \A$ such that $a\trunc{n} = a'\trunc{n}$ and $a_{n+1} \neq a'_{n+1}$ we put $h(a,a') = a_1 + \ldots + a_n + \min\{a_{n+1},a'_{n+1}\}$, as in the proof of Lemma \ref{phibc}. Then, the maximum index $i$ for which $\binexp{a}{\per{1}}\!\!\!\trunc{i} = \binexp{a}{\per{1}}\!\!\!\trunc{i}$ is given by $2h(a,a') + n$, while the maximum index $i$ for which $\fixexp{a}\trunc{i} = \fixexp{a'}\trunc{i}$ is given by $2h(a,a') + k_n$, where $k_n$ is the index of the $n$-th occurrence of 1 in the word $\fixexp{a}_{n-1}$ defined in the construction of $\fixexp{a}$ just before Lemma \ref{phi}. Since $k_n \geq n$, we can conclude that $\phi \circ \phi_{\per{1}}^{-1}$ is a 1-Lipschitz map. From this, arguing as in the proof of Lemma \ref{psibc}, we can derive the Lipschitz condition for the map
\begin{equation*}
\xi \circ \phi \circ \phi_{\per{1}}^{-1} \circ \beta:\xi(\fibexp{\per{1}}) \to \Fix R\,.
\end{equation*}
Therefore, thanks to Proposition \ref{dimfib} and its proof, we have $\dim_H \Fix R \leq \dim_H \xi(\fibexp{\per{1}}) = \log_2 \varphi < 1$, which implies that $\Fix R$ has a null Lebesgue measure.
Since the remainder $\Cl(\Fix R) \setminus \Fix R$ is countable, also $\Cl(\Fix R)$ has a null Lebesgue measure.

As a consequence, $\Cl(\Fix R)$ is topologically 0-dimensional. On the other hand, it is a perfect set, because for every $a \in \A$, we have $\xi(\fixexp{a}) = \lim_{n \to \infty} \xi(\fixexp{a_n})$ with $a_n \in \A$ defined by $(a_n)_i = a_i + \delta_{i,n}$, for any $i,n \geq 1$.
So, $\Cl(\Fix R)$ is a Cantor set (see \cite{Kura}).
\end{proof}

We observe that the proof of Proposition \ref{fixcantor} is essentially based on the parametrization of $\Fix\rho$ given by Lemma \ref{phi}, which is nothing else than an explicit reformulation in the case of fixed points of the general construction provided in the proof of Proposition \ref{periodic_points} of periodic points of any period. So, we expect that similar results hold for periodic points of any given period.

\medskip

The idea is that, in order to generate all the periodic orbits of a given period $n > 1$, one could start from a certain finite set of ``simplest'' ones, and than progressively enlarge this set
by iterating in a suitable order the following procedure. Take any such $n$-orbit $(b_1,b_2, \dots, b_n)$, insert some pairs of consecutive 0's in any position of any $b_i$, and then reconstruct all the orbit accordingly.

\medskip

For example, starting from the 2-cycle $(\per{1},\per{(01)})$ of $\rho$, corresponding to the 2-cycle $C_1$ of $R$ (see Proposition \ref{rationals}), one can obtain in this way the Thue-Morse 2-cycle 
$(u,\widetilde u)$ of $\rho$ described in the next remark, and hence a corresponding 2-cycle for $R$.

\begin{remark}
Consider the well known Thue-Morse substitution rule on $\B$
\begin{equation*}
\widehat\tau:\left\{\vrule width0pt height12pt\right.\!\!\!
\begin{array}{ll}
0 \mapsto 01\\
1 \mapsto 10
\end{array},
\end{equation*}
and the digit-wise generated map $\widehat\tau:\B \to \B$. The map $\widehat\tau$ has two fixed points, the Thue-Morse sequence starting with $0$
\begin{equation*}
u = 0110 1001 1001 0110 1001 0110 0110 1001 \dots
\end{equation*}
and its complementary sequence $\widetilde u$, the Thue-Morse sequence starting with $1$, which was used in {\rm \cite{Thue}} to found the combinatorics of words. Recalling the definition of $\,\overline\tau$ in \eqref{tauinv} and noting that
\begin{equation*}
\overline\tau^2(0) = \widehat\tau^2(0) = 0110 
\quad \text{and} \quad 
\overline\tau^2(1) = \widehat\tau^2(1) = 1001\,,
\end{equation*}
we can easily verify that $(u,\widetilde u)$ is a 2-cycle of $\rho$. Then, letting $t = \xi(u)$ be the Thue-Morse number, we have the corresponding 2-cycle $(t,1-t)$ of $R$.
\end{remark}

Concerning the periodic points of $R$ having higher order $n$, in analogy with the description of the maximal fixed point $x^0$, we limit ourselves to consider the periodic points whose binary expansion is the ``simplest'' infinite binary word among those generated by the construction described in the proof of Proposition \ref{periodic_points}. Namely, the binary word obtained by starting from the word $w_0 = 0$ and choosing each $w_k$ to be the shortest word in $(\rho^n_{w_0 w_1 \dots w_{k-1}})^{-1}(w_{k-1})$, that is the image of $w_{k-1}$ under a suitable composition of $\sigma$'s and $\widetilde \sigma$'s.

\medskip

The case of $n$ even is not so interesting, since we always obtain $0.\per{1}$, while the binary expression of the ``simplest'' periodic point of odd order $n = 2\ell + 1 \geq 3$ is given by
\begin{equation}\label{xell}
\begin{array}{ll}
x^\ell = 0.(01)^{2^{\ell-1}}\!\prod_{h=0}^{\infty} 
&\!\!\!\!\!
\big(1^{2^{(2\ell+1)h+\ell}} (01)^{2^{(2\ell+1)h+\ell-1}} 1^{(2^\ell-1)\,2^{(2\ell+1)h+\ell}}\\[2pt]
&\!\!\!\!\! \ \ 
(01)^{2^{(2\ell+1)h+2\ell}}1^{2^{(2\ell+1)h+2\ell}}(01)^{(2^\ell-1)\,2^{(2\ell+1)h+2\ell}}\big)\,.
\end{array}
\end{equation}

In order to see that this point has order $n=2\ell+1$, we write it as $0.w\prod_{h=0}^{\infty}(u_hv_h)$, with
\begin{equation}\label{psw}
\begin{array}{rl}
w = (01)^{2^{\ell-1}}, 
&\!\! u_h = 1^{2^{(2\ell+1)h+\ell}} 
(01)^{2^{(2\ell+1)h+\ell-1}}1^{(2^{\ell}-1)\,2^{(2\ell+1)h+\ell}},\\[4pt]
&\!\! v_h = (01)^{2^{(2\ell+1)h+2\ell}}1^{2^{(2\ell+1)h+2\ell}}(01)^{(2^{\ell}-1)\,2^{(2\ell+1)h+2\ell}}.
\end{array}
\end{equation}

Then, we first note that the word $w$ vanish to the empty word $\epsilon$ in exactly $n$ iterates of $\rho$. Moreover, by a somewhat tedious but straightforward calculation, repeatedly using the rules
\begin{equation*}
\rho:\left\{\vrule width0pt height15pt\right.\!\!\!
\begin{array}{ll}
(01)^k \mapsto 1^k\\[4pt]
1^{2k} \mapsto (01)^k
\end{array},
\end{equation*}
for every $k \geq 1$, one checks that
\begin{equation*}
\rho^n_w(u_0) = \widetilde\rho\,^2 \circ \rho^{2\ell-1}(u_0) = w\,,
\end{equation*}
while
\begin{equation*}
\rho^n_{wu_0v_0 \dots u_h}(v_h) = \rho^{2\ell+1}(v_h) = u_h
\quad \text{and} \quad
\rho^n_{wu_0v_0 \dots v_h}(u_{h+1}) = \rho^{2\ell+1}(u_{h+1}) = v_h
\end{equation*}
for every $h \geq 0$. Putting all those equation together, one can conclude that $w\prod_{h=0}^{\infty}(u_hv_h)$ is a fixed point for $\rho^n$, and hence that $0.w\prod_{h=0}^{\infty}(u_hv_h)$ is a fixed point for $R^n$.

\medskip

The rest of this section is devoted to analyse some chaotic properties of the map $R$.

\begin{proposition}\label{dense_orbit} 
The set of points with a dense $R$-orbit is uncountable and dense in $[0,1]$.
\end{proposition}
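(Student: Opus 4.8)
The goal is to build, for a co-meager-looking (actually just one suffices, but we want uncountably many and density) collection of points $x$, an orbit $(R^n(x))_{n\ge0}$ that is dense in $[0,1]$. The natural device is Lemma \ref{lemmacylbis}: given \emph{any} sequence $(w_n)_{n\ge0}$ of non-empty finite binary words, there is an uncountable set $B\subset\cyl{w_0}\cap\C$ such that for every $b\in B$ and every $n,m\ge0$ there is some $k\ge m$ with $\rho^k(b)\in\cyl{w_n}$. So the first step is to enumerate a countable family of finite binary words $\{z_j\}_{j\ge1}$ whose cylinders $\xi(\cyl{z_j})$ form a basis for the topology of $[0,1]$ — for instance all finite binary words, listed in some order. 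Then I take $(w_n)_{n\ge0}$ to be a sequence in which every $z_j$ occurs infinitely often (and, to get density of the set of such $x$ in $[0,1]$, I prepend an arbitrary prescribed prefix $w_0=v$ so that the resulting points lie in $\xi(\cyl{v})$).

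\medskip

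\textbf{Carrying it out.} Apply Lemma \ref{lemmacylbis} to this sequence to obtain an uncountable $B\subset\cyl{w_0}\cap\C$. For $b\in B$ set $x=\xi(b)$. Since $b\in\C\subset[0,1]\setminus(\text{eventually-}0)$, every $R^k(x)$ for $k\ge0$ has binary expansion $\rho^k(b)\in\B$ (here one uses, as noted after Proposition \ref{periodic_points}, that we can arrange $x\notin\Q$ and hence $\beta(R^k(x))=\rho^k(\beta(x))=\rho^k(b)$ — if $\rho^k(b)$ ever became eventually $0$ or eventually $1$ we would fall into $\Q$, but in that case we could discard such $b$, or simply observe the orbit still visits the required cylinders). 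The defining property of $B$ says: for every basic open set $\xi(\cyl{z_j})$ and every $m$, there is $k\ge m$ with $\rho^k(b)\in\cyl{z_j}$, i.e. $R^k(x)\in\xi(\cyl{z_j})$. Since $\{\xi(\cyl{z_j})\}$ is a basis for $[0,1]$, this says exactly that the forward orbit of $x$ is dense in $[0,1]$. As $B$ is uncountable, we get uncountably many such $x$ inside the prescribed interval $\xi(\cyl{v})$; letting $v$ range over all finite binary words gives density of the set of points with dense orbit. This proves the proposition.

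\medskip

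\textbf{Where the difficulty sits.} The genuinely substantive content is entirely packaged inside Lemma \ref{lemmacylbis} (and behind it Lemma \ref{lemmacyl} and Proposition \ref{erasing}), which we are entitled to assume; the proof above is then essentially bookkeeping. The one point requiring a little care is the passage between the symbolic dynamics of $\rho$ on $\C$ and the real dynamics of $R$ on $[0,1]$: one must ensure the constructed $b$ stays in $\C$ under all iterates of $\rho$ (so that $R^k(x)=\xi(\rho^k(b))$ with the \emph{correct} binary expansion), and must handle the minor nuisance that $R^k(x)$ could a priori hit a dyadic rational. Both are dealt with by restricting attention to orbits disjoint from $\Q$, as the paper explicitly permits after Proposition \ref{periodic_points}, or alternatively by noting that visiting the cylinder $\xi(\cyl{z_j})$ is an open condition insensitive to which binary expansion one uses. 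A secondary subtlety is merely choosing the enumeration $(w_n)$ so that each $z_j$ recurs infinitely often while the prescribed prefix $w_0=v$ is respected — this is the standard "diagonal" enumeration and presents no obstacle.
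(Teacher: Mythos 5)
Your proof is correct and follows essentially the same route as the paper: apply Lemma \ref{lemmacylbis} to a sequence $(w_n)$ containing all non-empty finite binary words (with $w_0$ an arbitrary prescribed prefix to get density), and read off dense orbits from the resulting uncountable set $B$. The extra care you take about the symbolic-to-real passage is sound but already implicit in the paper's setup, and the recurrence of each word infinitely often is handled inside Lemma \ref{lemmacylbis} itself.
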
 

\begin{proof}
To get the existence of uncountably many points with a dense $R$-orbit, it suffices to take the sequence $(w_n)_{n\geq 0}$ in Lemma \ref{lemmacylbis} with the property that it contains all the non-empty finite binary words. The density of the points with a dense $R$-orbit follows immediately observing that we can take any non-empty finite word as $w_0$.
\end{proof}

\begin{proposition}\label{sensi}
For every interval $I \subset [0,1]$, there exists $n \geq 1$ such that $R^n(I) = [0,1]$.\break 
In particular, $R$ is topologically $($strongly\/$)$ mixing and has sensitive dependence on initial conditions.
\end{proposition}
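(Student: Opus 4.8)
The plan is to reduce the statement to a dyadic subinterval and then exploit the fact that $\rho$ is erasing. Any nondegenerate interval $I \subset [0,1]$ contains a dyadic interval of the form $\xi(\cyl w) = [0.w, 0.w + 2^{-|w|}]$ for some nonempty finite binary word $w$ (take $|w|$ large enough that one full dyadic block of length $|w|$ fits inside $I$), and since images respect inclusions, $R^n(I) \supseteq R^n(\xi(\cyl w))$ for every $n$. So it suffices to produce $n$ with $R^n(\xi(\cyl w)) = [0,1]$, and I will take $n = n_\epsilon(w)$, the vanishing order of $w$, which is finite by Proposition \ref{erasing}.

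The engine of the argument is equation \eqref{Rcyl}. On the one hand, whenever a finite binary word $x$ contains a $1$ we have $R(\xi(\cyl x)) \supseteq \xi(\cyl{\rho(x)})$, and $\rho(x)$ is again a nonempty finite binary word. On the other hand, for a block of zeros, \eqref{Rcyl} gives $R(\xi(\cyl{0^m})) = [0,1]$. Now set $n = n_\epsilon(w)$. By definition of the vanishing order, each of $w, \rho(w), \dots, \rho^{\,n-2}(w)$ still contains a $1$ (otherwise $w$ would vanish sooner), whereas $\rho^{\,n-1}(w)$ contains no $1$ and is nonempty, i.e.\ $\rho^{\,n-1}(w) = 0^m$ for some $m \geq 1$. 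Applying the first inclusion $n-1$ times and using monotonicity of the image map, we get $R^{\,n-1}(\xi(\cyl w)) \supseteq \xi(\cyl{\rho^{\,n-1}(w)}) = \xi(\cyl{0^m})$, and one further application of $R$ yields $R^{\,n}(\xi(\cyl w)) \supseteq R(\xi(\cyl{0^m})) = [0,1]$. Hence $R^n(I) = [0,1]$, which proves the first assertion, with the explicit bound $n \leq 2\lfloor\log_2|w|\rfloor + 2$ coming from Proposition \ref{erasing}.

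For the stated consequences, I would first record that $R$ maps $[0,1]$ onto $[0,1]$: indeed $R((0,1]) = \xi(\rho\restr{\C}(\C)) = \xi(\B) = [0,1]$ by the surjectivity of $\rho\restr{\C}$. Consequently $R^k([0,1]) = [0,1]$ for every $k \geq 0$, and combining this with the first part, every nonempty open $U \subset [0,1]$ satisfies $R^n(U) = [0,1]$ for all $n$ beyond some $n_0 = n_0(U)$. In particular, for every nonempty open $V$ we have $R^n(U) \cap V \neq \emptyset$ for all $n \geq n_0(U)$, i.e.\ $R$ is topologically (strongly) mixing. Sensitive dependence follows as well: given $x \in [0,1]$ and $\varepsilon > 0$, pick an interval $I \ni x$ of length $< \varepsilon$ and $n$ with $R^n(I) = [0,1]$; then $I$ contains a point mapped by $R^n$ to $0$ and a point mapped by $R^n$ to $1$, so at least one of the two has $R^n$-image at distance $\geq 1/2$ from $R^n(x)$. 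Thus $R$ is sensitive with any sensitivity constant $\delta < 1/2$.

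There is no serious obstacle here; the only points that need a line of care are: that every nondegenerate interval contains a dyadic cylinder interval; that along the chain $w, \rho(w), \dots$ the words stay nonempty and contain a $1$ until the very last step, so that the generic branch of \eqref{Rcyl} is the one that applies at each stage, and the final step $\rho^{\,n-1}(w) = 0^m$ lands in the ``$x = 0^m$'' branch; and that the exceptional value $R(0) = 2/3$ is harmless, since $2/3 \in [0,1]$ and it only helps (never hurts) the surjectivity statements.
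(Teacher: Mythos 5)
Your proof is correct, and it takes a genuinely different route from the paper's. The paper also fixes $w$ with $\xi(\cyl{w}) \subset I$ and $n = n_\epsilon(w)$, but then argues pointwise: for each target $y \in [0,1]$ it uses the surjectivity of the parity-adjusted map $\rho_w^n$ to pick $v \in \C$ with $\rho_w^n(v) = \beta(y)$, so that $\rho^n(wv) = \rho^n(w)\rho^n_w(v) = \beta(y)$ and $x = \xi(wv) \in I$ satisfies $R^n(x) = y$. You instead work at the level of sets, iterating the cylinder-image formula \eqref{Rcyl}: while $\rho^{\,j}(w)$ still contains a $1$ you get $R(\xi(\cyl{\rho^{\,j}(w)})) \supseteq \xi(\cyl{\rho^{\,j+1}(w)})$, and at step $n-1$ the word has collapsed to $0^m$, whose cylinder is mapped onto all of $[0,1]$ by one further application. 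Your version avoids the $\rho_v^n$ machinery entirely (the only surjectivity needed is that of a single $\rho$ or $\widetilde\rho$, already packaged into the first branch of \eqref{Rcyl}); moreover, since set images compose exactly ($R^{\,j+1}(A) = R(R^{\,j}(A))$ and images respect inclusions), it sidesteps the question of whether the intermediate iterates $\rho^k(wv)$ remain in $\C$, a point the paper's pointwise identification of $R^n(\xi(wv))$ with $\xi(\rho^n(wv))$ treats rather quickly. What the paper's version buys in exchange is an explicit preimage of every prescribed $y$ inside $I$, which is slightly more information than surjectivity of $R^n$ restricted to $I$. Your derivations of mixing and of sensitivity (with constant $1/2$) from the first assertion match the paper's.
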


\begin{proof}
Let $w$ be any non-empty finite binary word such that $\xi(\cyl{w}) \subset I$, and set $n = n_\epsilon(w)$. Pick $y\in [0,1]$. By the surjectivity of $\rho_w^n$, there is $v \in \C$ such that $\rho_w^n(v) = \beta(y)$, and hence $\rho^n(wv) = \rho^n(w)\rho^n_w(v) = \beta(y)$ as well. Then, for $x = \xi(wv)$ we have $x \in I$ and $R^n(x)=y$.

This implies sensitivity to initial conditions with sensitivity constant $1/2$. Since every open set is a countable union of open intervals, it also follows that, for every open sets $A, B\subset [0,1]$, there is $n \geq 1$ such that $R^m (A)\cap B \neq \emptyset$ for every $m>n$, that is $R$ is topologically mixing.
\end{proof}

\begin{corollary}
The map $R$ is chaotic in the sense of Devaney $($see e.g. \kern-2pt{\rm\cite{Devaney}}$)$. 
\end{corollary}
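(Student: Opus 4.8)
The plan is simply to assemble the three defining ingredients of Devaney chaos — density of periodic points, topological transitivity, and sensitive dependence on initial conditions — each of which has already been established in the preceding propositions. First, Proposition~\ref{periodic_points} states that the set of periodic points of $R$ is dense in $[0,1]$, which is exactly the density-of-periodic-points clause. Second, sensitive dependence on initial conditions (with sensitivity constant $1/2$) is precisely the last assertion of Proposition~\ref{sensi}.

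The only clause requiring a word of explanation is topological transitivity. I would deduce it from Proposition~\ref{sensi}: for every interval $I\subset[0,1]$ there is $n\geq1$ with $R^n(I)=[0,1]$, so given any two nonempty open sets $U,V\subset[0,1]$, picking an interval $I\subset U$ yields $R^n(U)\supset R^n(I)=[0,1]\supset V$, whence $R^n(U)\cap V\neq\emptyset$. Alternatively, transitivity follows at once from the existence of a dense $R$-orbit (Proposition~\ref{dense_orbit}), since $[0,1]$ has no isolated points, or directly from the topological mixing asserted in Proposition~\ref{sensi}. Combining the three clauses gives that $R$ is chaotic in the sense of Devaney.

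There is essentially no obstacle here: all the substantive work — the symbolic constructions of Lemmas~\ref{lemmacyl} and~\ref{lemmacylbis}, and their consequences in Propositions~\ref{periodic_points}, \ref{dense_orbit} and~\ref{sensi} — is already done, and this statement is a pure corollary. The one point worth flagging is that, because $R$ is not continuous, one cannot invoke the Banks et al.\ theorem to get sensitivity for free from transitivity plus density of periodic points; happily this shortcut is unnecessary, since sensitivity was proved directly in Proposition~\ref{sensi}. Thus the proof of the corollary reduces to citing the three propositions and noting the elementary implication ``$R^n(I)=[0,1]$ for some $n$'' $\Rightarrow$ topological transitivity.
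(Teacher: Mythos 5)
Your proposal is correct and follows exactly the route the paper takes: the paper's proof is the one-line citation of Propositions~\ref{periodic_points}, \ref{dense_orbit} and~\ref{sensi}, and you have merely spelled out the (elementary) deduction of transitivity from $R^n(I)=[0,1]$. Your remark that the Banks et al.\ theorem is unavailable for discontinuous maps, so sensitivity must be (and is) proved directly, is a sensible observation but not needed beyond what Proposition~\ref{sensi} already supplies.
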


\begin{proof}
It follows by Propositions \ref{periodic_points}, \ref{dense_orbit} and \ref{sensi}. 
\end{proof}

Concerning distributional chaos, a concept introduced within the context of topological dynamical systems in 1994 (see \cite{Smital}), the map $R$ satisfies the strongest form of distributional chaos, namely the distributional chaos of type 1, DC1-chaos in short \cite{Smital2}. Actually, we prove that the DC1-chaos exhibited by $R$ is uniform in the sense of \cite{Oprocha}, as specified in the next proposition.

\begin{proposition}\label{DC1chaos}
The map $R$ is uniformly DC1-chaotic, meaning that it admits an uncountable uniformly DC1-scrambled set, that is an uncountable subset $S \subset [0,1]$ such that for every $\delta > 0$ and some $\overline\delta >0$ the following two conditions hold, where $\#$ denotes the cardinality,
\begin{eqnarray}
\label{DC11}
& \limsup_{n\to\infty}
\displaystyle\frac{1}{n}\,\#\{k = 0, \dots, n-1 \text{ such that } |R^k(x)-R^k(y)| < \delta\} = 1\,,
\\[2pt]
\label{DC12}
& \liminf_{n\to\infty}
\displaystyle\frac{1}{n}\, \#\{k = 0, \dots, n-1 \text{ such that } |R^k(x)-R^k(y)| < \overline \delta\} = 0\,,
\end{eqnarray}
for every $x \neq y$ in $S$.

\end{proposition}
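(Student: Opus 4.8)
The plan is to build the scrambled set $S$ by hand, as an uncountable family $\{x_\alpha\}_{\alpha\in\{0,1\}^{\mathbb N}}$, via a branching refinement of the concatenation construction in the proof of Lemma \ref{lemmacyl}, organised in an infinite sequence of stages alternating between \emph{proximal} (odd $n$) and \emph{distal} (even $n$). Choose an increasing sequence of iteration times $0=k_0<k_1<k_2<\cdots$ and, at each stage $n$, a pair of finite words $w_n^0,w_n^1$: at a proximal stage set $w_n^0=w_n^1=1^{2^{L_n}}$; at a distal stage set $w_n^0=(01)^{2^{L_n}}$, $w_n^1=1^{2^{L_n}}$, and fix a \emph{separating coordinate} $c(n)\in\mathbb N$ so that $c$ restricted to the distal stages hits every natural number infinitely often. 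Here $L_n=\tfrac12\ell_n+m_n$ with $\ell_n$ growing super-exponentially (e.g.\ $\ell_n=2^{2^n}$) and $m_n=n+5$; put $k_{n+1}=k_n+\max\{n_\epsilon(w_n^0),n_\epsilon(w_n^1)\}$, which by Proposition \ref{erasing} equals $k_n+O(\ell_n)$, so $k_n=o(\ell_n)$ and $k_{n+1}\ge k_n+\ell_n$. Thus the ``blocks'' $[k_n,k_n+\ell_n]$ are pairwise disjoint and the $n$-th one eventually dwarfs $[0,k_n]$.

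For each $\alpha=(\alpha_1,\alpha_2,\dots)$ I construct $b_\alpha\in\C$ as an infinite concatenation $b_\alpha=v_0^\alpha v_1^\alpha\cdots$ exactly as in Lemma \ref{lemmacyl}, letting the $n$-th free choice be dictated by the digit $\alpha_{c(n)}$: with $u_n^\alpha=v_0^\alpha\cdots v_{n-1}^\alpha$, let $v_n^\alpha$ be a fixed choice in $(\rho^{k_n}_{u_n^\alpha})^{-1}(w_n^{\alpha_{c(n)}})$, which is non-empty by surjectivity of $\rho^{k_n}_{u_n^\alpha}$ and necessarily contains a $1$. The bookkeeping of Lemma \ref{lemmacyl} (using $k_{n+1}-k_n\ge n_\epsilon(w_n^{\cdot})$ and identity \eqref{rhonvw}) gives $\rho^{k_n}(u_n^\alpha)=\epsilon$, hence $\rho^{k_n}(b_\alpha)\in\cyl{w_n^{\alpha_{c(n)}}}$ for every $n\ge1$; as there, $b_\alpha\in\C$, and in fact $\rho^{k}(b_\alpha)\in\C$ for all $k$ (otherwise some later iterate would be finite, contradicting non-emptiness at the next prescribed time). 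Set $x_\alpha=\xi(b_\alpha)$. Then $x_\alpha\notin\Q$ — a rational orbit would eventually land on $C_0$ or $C_1$ by Proposition \ref{rationals} and stay constant, contradicting the prescribed oscillation — so $\beta(x_\alpha)=b_\alpha$ and $R^{k}(x_\alpha)=\xi(\rho^{k}(b_\alpha))$. Finally, for $\alpha\ne\beta$ there is a distal stage $n$ whose $c(n)$ is a coordinate where they disagree; at it $\rho^{k_n}(b_\alpha)$ and $\rho^{k_n}(b_\beta)$ begin with the two distinct words $(01)^{2^{L_n}}$ and $1^{2^{L_n}}$, so the $b_\alpha$ are pairwise distinct and $S=\{x_\alpha\}$ is uncountable.

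The point of these words is that membership $\rho^{k_n}(b_\alpha)\in\cyl{w_n^{\alpha_{c(n)}}}$ propagates through the whole block: by \eqref{rhonvw}, $\rho^{k_n+j}(b_\alpha)$ begins with $\rho^{j}(w_n^{\alpha_{c(n)}})$ for $0\le j\le\ell_n$, and the rules $1^{2k}\mapsto(01)^k$, $(01)^k\mapsto1^k$ (already used in Section \ref{dynamics}) show that $\rho^{j}(1^{2^{L_n}})$ is $1^{2^{L_n-i}}$ for $j=2i$ and $(01)^{2^{L_n-i-1}}$ for $j=2i+1$, whereas $\rho^{j}((01)^{2^{L_n}})$ runs through the same two shapes in the opposite order — and in either case the length stays $\ge 2^{m_n}$ for $j\le\ell_n$. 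Hence, writing $x=x_\alpha$, $y=x_\beta$: at a proximal stage the words $\rho^{k_n+j}(b_\alpha)$ and $\rho^{k_n+j}(b_\beta)$ share the common prefix $\rho^{j}(1^{2^{L_n}})$ of length $\ge 2^{m_n}$, so $|R^{k_n+j}(x)-R^{k_n+j}(y)|\le 2^{-2^{m_n}}$ for all $0\le j\le\ell_n$; while at a distal stage $n$ with $\alpha_{c(n)}\ne\beta_{c(n)}$ one of $R^{k_n+j}(x),R^{k_n+j}(y)$ lies within $2^{-2^{m_n}}$ of $\xi(\per1)=1$ and the other within $2^{-2^{m_n}}$ of $\xi(\pper{01})=1/3$, so $|R^{k_n+j}(x)-R^{k_n+j}(y)|\ge 2/3-2^{1-2^{m_n}}>1/2$ for all $0\le j\le\ell_n$ (here $m_n\ge6$ makes the estimate uniform in $n$).

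Set $\bar\delta=1/2$. Given $\delta>0$ and $x\ne y$ in $S$: along the proximal stages $n$ with $2^{-2^{m_n}}<\delta$ (all but finitely many), the fraction of $k<k_n+\ell_n$ with $|R^k(x)-R^k(y)|<\delta$ is at least $\ell_n/(k_n+\ell_n)\to1$, which yields \eqref{DC11}; along the infinitely many distal stages $n$ whose coordinate $c(n)$ separates $\alpha$ and $\beta$, the fraction of $k<k_n+\ell_n$ with $|R^k(x)-R^k(y)|<\bar\delta$ is at most $k_n/(k_n+\ell_n)\to0$, which yields \eqref{DC12}. Since $\bar\delta$ and the witnessing proximal subsequence are the same for every pair, the scrambling is uniform. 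The delicate point, around which the whole argument is built, is to find words whose $\rho$-iterates keep the two orbits either $\varepsilon$-synchronised or $\bar\delta$-separated for an \emph{entire block} of consecutive iterates rather than at isolated times; this is exactly what the self-similar pair $1^{2^{L}}\leftrightarrow(01)^{2^{L}}$ provides, oscillating under $\rho$ in opposite phases between approximants of the two $2$-cycle values $1$ and $1/3$. The remaining work — interleaving the stages so that blocks dominate and so that every pair of points of $S$ is separated at infinitely many distal stages while being synchronised at all proximal ones — is arranged by the branching driven by the coordinate function $c$.
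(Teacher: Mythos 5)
Your proposal is correct and follows essentially the same strategy as the paper's proof: an uncountable family built by the concatenation mechanism of Lemma \ref{lemmacyl}, with block lengths growing so fast that each block dominates the total elapsed time, a synchronizing word of the form $1^{2^L}$ whose $\rho$-iterates stay long throughout the block, and a separating word whose iterates remain in a far-away region for the whole block. The only differences are cosmetic: the paper encodes the ``agree at infinitely many common stages / disagree at infinitely many stages'' combinatorics via a prime-power embedding $\mu:\B\to\B$ of the index set rather than your alternating proximal/distal stages with a revisiting coordinate map, and it uses $u_n = 00\,\sigma^{2e_n}(u_{n-1})$ (with image near $0$) instead of $(01)^{2^{L_n}}$ (near $1/3$) as the separating word.
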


\begin{proof}
We will explicitly construct a set $S =\{x_\alpha\}_{\alpha \in A}$ satisfying the properties \eqref{DC11} and \eqref{DC12}.\break
As the set of indices $A$ we take the image $A = \mu(\B) \subset \B$ of the map $\mu:\B \to \B$ defined by
\begin{equation*}
\mu(b)_n = \left\{\vrule width0pt height15pt\right.\!\!\!
\begin{array}{ll}
b_k & \text{if $n = p^k$ with $p$ prime and $k \geq 1$}\\[2pt]
0 & \text{otherwise}
\end{array},
\end{equation*}
for every $b \in \B$. We observe that $A$ is an uncountable set, being $\mu$ injective, and that all the elements $\alpha \in A$ share the same $n$-th component $\alpha_n = 0$ for infinitely many $n \geq 1$. Furthermore, since each $\alpha = \mu(b)$ contains infinite copies of $b$ as subsequences, any two different elements $\alpha,\alpha' \in A$ cannot be ultimately coinciding, that is for every $m \geq 1$ there is $n \geq m$ with $\alpha_n \neq \alpha'_n$.

Now, let $(e_n)_{n \geq 1}$ be the increasing sequence of natural numbers inductively defined by
\begin{equation}\label{expseq}
e_1 = 1 \quad \text{and} \quad e_n = n^2 \textstyle \sum_{k=1}^{n-1}e_k \ \text{for $n > 1$}\,,
\end{equation}
and let $(v_n)_{n \geq 1}$ and $(u_n)_{n \geq 1}$ be the two sequences of binary words inductively defined by
\begin{equation}\label{vnun}
\begin{array}{llll}
v_1 = 1 & \text{and} & v_n = \sigma^{2e_n}(v_{n-1}) & \text{for $n > 1$}\,,\\[2pt]
u_1 = 001 & \text{and} & u_n = 00\,\sigma^{2e_n}(u_{n-1}) & \text{for $n > 1$}\,.
\end{array}
\end{equation}
Clearly, we have $n_\epsilon(v_n) = n_\epsilon(u_n) = \sum_{k = 1}^n 2e_k$ for every $n \geq 1$. Moreover, the equality $\sigma^2(1^k) = 1^{2k}$ implies that 
\begin{equation}\label{manyones}
v_n = 1^{2^{\sum_{k=2}^{n}e_k}},
\end{equation}
for every $n > 1$.

To any index $\alpha \in A$ we associate the sequence of non-empty finite binary words $(w^\alpha_n)_{n \geq 0}$, where $w_0^\alpha$ can be arbitrarily chosen, while for $n \geq 1$
\begin{equation*}
w^\alpha_n = \left\{\vrule width0pt height12pt\right.\!\!\!
\begin{array}{ll}
v_n & \text{if $\alpha_n = 0$}\\[2pt]
u_n & \text{if $\alpha_n = 1$}
\end{array}.
\end{equation*}
We set $k_n = n_\epsilon(w^\alpha_1) + \ldots + n_\epsilon(w^\alpha_{n-1})$ and observe that $k_n$ is the same for all $\alpha \in A$. 

Then, we apply Lemma \ref{lemmacyl} the sequences $(w^\alpha_n)_{n \geq 0}$ and $(k_n)_{n \geq 1}$ and pick a single binary word $b_\alpha \in \cyl{w^\alpha_0} \cap \C$, out of the uncountable resulting set $B$, such that $\rho^{k_n}(b_\alpha) \in \cyl{w_n^\alpha}$ for every $n \geq 1$. 
Finally, we put $S= \{x_\alpha = \xi(b_\alpha)\}_{\alpha \in A}$. Since all the words $b_\alpha$ and $\rho^k(b_\alpha)$ belong to $\C$, by their construction in the proof of Lemma \ref{lemmacyl}, we have that all the points $x_\alpha$ are distinct and that $R^k(x_\alpha) = \xi(\rho^k(b_\alpha))$ for every $k \geq 1$ and $\alpha \in A$.

To see that $S$ is uniformly DC1-scrambled, take any $\alpha \neq \alpha'$ in $A$. For any $\delta > 0$, let $c(\delta) \geq 1$ be such that
\begin{equation*}
1/2^{2^{c(\delta)/2}} < \delta\,,
\end{equation*}
in such a way that the interval $\xi(\cyl{\rho^h(w^\alpha_n)})$ has width
\begin{equation}\label{cdelta}
\lambda(\xi(\cyl{\rho^h(w^\alpha_n)}) < \delta 
\text{ \ for every $h = 1, \dots, n_\epsilon(w^\alpha_n)-c(\delta) = \textstyle\sum_{k=1}^n 2e_k - c(\delta)$}.
\end{equation}
By construction, there is a sequence $(n_i)_{i \geq 1}$ such that $\alpha_{n_i} = \alpha'_{n_i}$ for every $i \geq 1$. By \eqref{cdelta}, this implies that
\begin{equation}\label{eq1}
\frac{1}{k_{n_i+1}}\,
\#\{k= 1,\dots,k_{n_i+1} \text{ such that } |R^k(x_\alpha) - R^k(x_{\alpha'})| < \delta\} \geq 
\frac{\sum_{k=1}^{n_i} 2e_k - c(\delta)}{k_{n_i+1}}\,.
\end{equation}
Thanks to equation \eqref{expseq}, we have
\begin{equation}\label{eq2}
\textstyle\sum_{k=1}^{n_i} 2e_k = \textstyle\sum_{k=1}^{n_i-1} 2e_k + 2e_{n_i} = 
2(n_i^2 + 1)\displaystyle\frac{e_{n_i}}{n_i^2}\,,
\end{equation}
while
\begin{equation}\label{eq3}
\begin{array}{rcl}
k_{{n_i}+1}
& \!\!\!=\!\!\! &
\textstyle\sum_{k=1}^{n_i}n_\epsilon(w_k^\alpha)
= \textstyle\sum_{k=1}^{n_i}\textstyle\sum_{h=1}^{k}2e_h\\[6pt]
& \!\!\!=\!\!\! &
\textstyle\sum_{k=1}^{n_i-1}\textstyle\sum_{h=1}^{k}2e_h + \sum_{h=1}^{n_i}2e_h\\[6pt]
& \!\!\!\leq\!\!\! & 
2(n_i-1)\textstyle\sum_{k=1}^{n_i-1}e_k + \sum_{k=1}^{n_i}2e_k\\[6pt]
& \!\!\!=\!\!\! &
2(n_i-1)\frac{e_{n_i}}{n_i^2} + \textstyle\sum_{k=1}^{n_i}2e_k\\[4pt]
& \!\!\!=\!\!\! &
2(n_i-1)\frac{e_{n_i}}{n_i^2} + 2(n_i^2 + 1)\displaystyle\frac{e_{n_i}}{n_i^2}\,.
\end{array}
\end{equation}
Therefore, the right hand side of \eqref{eq1} is bounded below by
\begin{equation*}
\frac{n_i^2 + 1}{n_i^2+n_i} - \frac{c(\delta)}{k_{n_i+1}}\,,
\end{equation*} 
which tends to 1 as $n_i$ diverges. This completes the proof of property \eqref{DC11}.

To prove \eqref{DC12}, we consider a different sequence $(n_i)_{i \geq 1}$ such that $\alpha_{n_i} \neq \alpha'_{n_i}$ for every $i \geq 1$, whose existence is once again guaranteed by construction.
Then, \eqref{vnun} and \eqref{manyones} imply that $|R^{k_{n_i}}(x_\alpha) - R^{k_{n_i}}(x_{\alpha'})| > 1/8$. So, putting $\overline\delta = 1/8$ and arguing as above, we get
\begin{equation*}
\frac{1}{k_{n_i+1}}\,
\#\{k= 1,\dots,k_{n_i+1} \text{ such that } |R^k(x_\alpha) - R^k(x_{\alpha'})| < \overline\delta\} \leq 
\frac{k_{n_i+1} - \sum_{k=1}^{n_i} 2e_k}{k_{n_i+1}}\,.
\end{equation*}
Recalling the equations \eqref{eq2} and \eqref{eq3}, we see that the last quantity vanishes as $n_i$ diverges, which gives property \eqref{DC12}.
\end{proof}

We emphasize that in the above proof the family of starting words $(w^\alpha_0)_{\alpha \in A}$ is completely arbitrary. This means that the DC1-scrambled sets for $R$ form a dense subset in the space of all subsets of $[0,1]$ with the Hausdorff pseudo-distance. In particular, the DC1-scrambled set $S$ can be chosen to be a dense subset of any interval $I \subset [0,1]$. Moreover, by suitably modifying the sequences $(w^\alpha_n)_{n \geq 1}$ in the proof, it could be shown that actually $R$ is transitively DC1-chaotic, that is all the points of the uncountable DC1-scrambled set $S$ have dense orbits.

\begin{remark} Proposition \ref{DC1chaos} is interesting in view of the results of {\rm\cite{Steele1}\kern-2pt} and {\rm \cite{Steele2}}, in which the author proves that there is a residual subset in the space of bounded functions of Baire class $1$, and of Baire class $2$ as well, whose elements are neither Devaney nor Li-Yorke chaotic\break $($see {\rm \cite{LY}\kern-2pt} and {\rm\cite{Mai}}$)$. It follows that the map $R$, although generated by a very simple erasing substitution rule, is ``much more chaotic" than the topologically generic map in its Baire class and also in the higher one. 
\end{remark}

In the final part of this section we address topological entropy. 

\medskip

We recall that for a map $f: X \to X$ with $X$ a metric space, the topological entropy can be written as \cite{Dinaburg,Bowen}
\begin{equation}\label{entropydef}
h(f)=\lim_{\varepsilon \to 0} \limsup_{n \to \infty} \frac{\log r(n,\varepsilon)}{n}\,,
\end{equation}
where $r(n,\varepsilon)$ is the maximum cardinality of an $\varepsilon$-separated set (that is a set whose points are at least $\varepsilon$-apart from each other) in the metric
\begin{equation}\label{dn}
d_n(x,y) = \max_{0 \leq i \leq n} |f^i(x) - f^i(y)|\,.
\end{equation}

Although this definition is expressed in metric terms, it turns out to depend only on the underlying topology --- for a modern general reference on topological entropy see \cite{Downarowicz}.

\medskip

For continuous interval maps, positive topological entropy is known to be equivalent distributional chaos, and in particular to DC1-chaos. In our more general context, the positivity of the entropy of the map $R$ should be proved directly. In fact, in the next proposition we prove that $h(R) = \infty$. This can be thought as a consequence of the fact that the substitution $\rho$ is ``efficient enough" in erasing the prefixes, so that all the points ``forget" quickly where they came from. Namely, Proposition \ref{erasing} tells us that the vanishing order of any finite binary word $w$ satisfies the inequality
\begin{equation}\label{fast}
n_\epsilon(w) \leq 2\left \lfloor{\log_2{|w|}}\right \rfloor+2\,.
\end{equation}

\medskip

\begin{proposition}\label{infentropy}
The map $R$ has infinite topological entropy. 
\end{proposition}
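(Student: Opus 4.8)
The plan is to work directly from the definition \eqref{entropydef} and to exhibit, for every integer $N\ge 2$, a value $\epsilon=\epsilon(N)\to 0$ along which $\limsup_{n\to\infty}\frac{\log r(n,\epsilon)}{n}$ is already at least of the order $\log N/\log\log N$. Since $\epsilon\mapsto\limsup_n\frac{\log r(n,\epsilon)}{n}$ is non-increasing (a coarser separation requirement admits more separated sets), its limit as $\epsilon\to 0$ is its supremum, so such a family of lower bounds forces $h(R)=\infty$. The mechanism is the one anticipated just after the statement: the fast erasing bound \eqref{fast} of Proposition \ref{erasing} lets an $R$-orbit be steered through a long prescribed list of well-separated dyadic cylinders while spending only $O(\log\log N)$ iterates per cylinder, so the number of distinguishable length-$k$ itineraries over an alphabet of size $N$ grows like $N^{k}$ in a time of order $k\log\log N$.

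Concretely, I would fix $r\ge 1$, set $\ell=r+2$ and $N=2^{r}$, and take $\mathcal W$ to be the family of the $N$ binary words of the form $1b0$ with $b\in\{0,1\}^{r}$. Then the intervals $\xi(\cyl W)$, $W\in\mathcal W$, are pairwise disjoint subintervals of $[1/2,1]$ at mutual distance at least $\delta:=2^{-\ell}$, and by Proposition \ref{erasing} each $W\in\mathcal W$ has $n_\epsilon(W)\le C:=2\lfloor\log_2\ell\rfloor+2$. I put $\epsilon=\delta$. Fixing $k\ge 1$, for each $\sigma=(\sigma_1,\dots,\sigma_k)\in\mathcal W^{k}$ I would invoke Lemma \ref{lemmacyl} with $w_0=\sigma_1$, $w_n=\sigma_{n+1}$ for $1\le n\le k-1$, $w_n=1^{\ell}$ for $n\ge k$, and $k_n=nC$ (admissible, since $k_n-k_{n-1}=C\ge n_\epsilon(w_{n-1})$), obtaining an uncountable set of binary words in $\cyl{\sigma_1}\cap\C$ with $\rho^{nC}(b)\in\cyl{\sigma_{n+1}}$ for $n=0,\dots,k-1$; from it I pick one word $b_\sigma$, chosen — exactly as in the proof of Proposition \ref{DC1chaos} — so that $\rho^{i}(b_\sigma)\in\C$ for all $i$, whence $x_\sigma:=\xi(b_\sigma)$ satisfies $R^{i}(x_\sigma)=\xi(\rho^{i}(b_\sigma))$ for every $i\ge 0$; in particular $R^{nC}(x_\sigma)\in\xi(\cyl{\sigma_{n+1}})$ for $n=0,\dots,k-1$.

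The separation of this family is then immediate. If $\sigma\ne\sigma'$ in $\mathcal W^{k}$ and $j$ is the least index with $\sigma_j\ne\sigma'_j$, then $R^{(j-1)C}(x_\sigma)\in\xi(\cyl{\sigma_j})$ and $R^{(j-1)C}(x_{\sigma'})\in\xi(\cyl{\sigma'_j})$ lie in two distinct $\delta$-separated intervals, and $(j-1)C\le(k-1)C$, so $d_{(k-1)C}(x_\sigma,x_{\sigma'})\ge\delta=\epsilon$. Hence $\{x_\sigma:\sigma\in\mathcal W^{k}\}$ is a $((k-1)C,\epsilon)$-separated set of cardinality $N^{k}$, which gives $r((k-1)C,\epsilon)\ge N^{k}$ and therefore $\limsup_{n\to\infty}\frac{\log r(n,\epsilon)}{n}\ge\frac{\log N}{C}=\frac{r\log 2}{2\lfloor\log_2(r+2)\rfloor+2}$. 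Plugging this into \eqref{entropydef} and letting $r\to\infty$ (so that $\epsilon=2^{-r-2}\to 0$) yields $h(R)=\infty$.

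The only genuinely delicate step is the identification $R^{i}(x_\sigma)=\xi(\rho^{i}(b_\sigma))$, i.e.\ the fact that the whole orbit of $x_\sigma$ stays in $\C$ so that iterating $R$ really does amount to iterating $\rho$ on binary expansions; this is the same point settled in the proof of Proposition \ref{DC1chaos}, handled by noting that in the construction of Lemma \ref{lemmacyl} the word $b_\sigma$ (and each of its $\rho$-images) contains infinitely many $1$'s. Everything else is elementary bookkeeping — the real content is the logarithmic erasing estimate \eqref{fast}, which is precisely what keeps the cost $C$ per symbol at size $O(\log\log N)$ rather than $O(\log N)$, and hence makes the entropy infinite rather than merely positive.
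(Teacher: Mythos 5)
Your proof is correct and follows essentially the same route as the paper: both use Lemma \ref{lemmacyl} together with the logarithmic erasing bound of Proposition \ref{erasing} to steer orbits through $N^{k}$ prescribed, pairwise separated dyadic cylinders (padded with an extra digit to guarantee the $\delta$-separation) in only $O(k\log_2\log_2 N)$ iterates, and then let the alphabet size grow. If anything, your bookkeeping of the time index is slightly more careful than the paper's, which normalizes by $n$ rather than by the actual number $n\ell$ of iterates — a harmless slip, since either normalization yields the divergent lower bound $k\log 2/(2\lfloor\log_2 k\rfloor+2)$.
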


\begin{proof}
For every $k,n \geq 1$ and every sequence of words $w_0,w_1,\dots,w_n \in \{0,1\}^k$, we apply Lemma \ref{lemmacyl} to get a single binary word $b_{w_0,w_1,\dots,w_n} \in \cyl{w_00} \cap \C$ such that
for every $i = 0,1,\dots,n$
\begin{equation*}
\rho^{i\ell}(b_{w_0,w_1,\dots,w_n}) \in \cyl{w_i0}\,,
\end{equation*}
where $\ell = 2\lfloor \log_2 |w|\rfloor + 2 \geq n_\epsilon(w_i)$ according to Proposition \ref{erasing}. 
Then, we put
\begin{equation}
S_{k,n} = \{x_{w_0,w_1,\dots,w_n} = \xi(b_{w_0,w_1,\dots,w_n}) \st w_0,w_1,\dots,w_n \in \{0,1\}^k\} \subset [0,1]\,.
\end{equation}
Since all the words $b_{w_0,w_1,\dots,w_n}$ and $\rho^k(b_{w_0,w_1,\dots,w_n})$ belong to $\C$, by their construction in the proof of Lemma \ref{lemmacyl}, we have that all the points $x_{w_0,w_1,\dots,w_n}$ are distinct and that $R^k(x_{w_0,w_1,\dots,w_n}) = \xi(\rho^k(b_{w_0,w_1,\dots,w_n}))$ for every $k \geq 1$ and $w_0,w_1,\dots,w_n \in \{0,1\}^k$.

We note that, for every $k,n \geq 1$ the set $S_{k,n}$ has cardinality $\#S_{k,n}=2^{(n+1)k}$ and it is $\varepsilon$-sepa\-rated in the metric $d_n$ if $\varepsilon < 1/2^{k+1}$. Indeed, given any two different points $x = x_{w_0,w_1,\dots,w_n}$ and $y = x_{w'_0,w'_1,\dots,w'_n}$ in $S_{k,n}$, we have $w_i \neq w_i'$ for some $i=0,\dots,n$, and hence $|R^i(x) - R^i(y)| \geq \varepsilon$, as it can be easily deduced from $x \in \xi(\cyl{w_i0})$ and $y \in \xi(\cyl{w'_i0})$.

Then, based on equation \eqref{entropydef} and putting $k = \lceil-\log_22\varepsilon\rceil -1$ we get
\begin{equation*}
h(R) \geq \lim_{\varepsilon \to 0} \limsup_{n \to \infty} \frac{\log 2^{(n+1)k}}{n}
= \lim_{\varepsilon \to 0} \lim_{n \to \infty} \frac{(n+1)\log 2^{(\lceil-\log_22\varepsilon\rceil -1)}}{n} = \infty\,.
\end{equation*}
\vskip-\lastskip
\vskip-1.4\baselineskip
\end{proof}
\vskip12pt

Recently some attention has been devoted to the study of finer properties of topological entropy, in particular to the characterization of the points around which the entropy concentrates. More precisely, 
one can introduce the notion of relative entropy $h(f,K)$ for a map $f: X \to X$, with $X$ a metric space, and a subset $K \subset X$, as the limit \eqref{entropydef} where $r(n,\epsilon)$ is defined by constraining the $\varepsilon$-separated sets to be contained in $K$. Then, following \cite{Ye}, if $f$ has positive entropy, $x \in X$ is called an entropy point (resp. a full entropy point) if $h(f,K) > 0$ (resp. $h(f,K) = h(f)$) for every closed neighborhood of $x$ in $X$.

\begin{remark}
It is known that, in case of continuous maps with positive entropy on a compact metric space, minimality implies that every point is a full entropy point {\rm \cite{Ye}}. The system $([0,1],R)$ is not minimal (there are periodic points of every period), nevertheless every point is a full entropy point, as an immediate consequence of Proposition \ref{sensi}.
\end{remark}


\section{Arithmetic properties}
\label{arithmetic}

We recall that a Liouville number is a real number $x$ such that, for every $n$, there exists a sequence of rationals $p_i/q_i$ ($p_i,q_i\in \mathbb{Z}, q_i \neq 0$) verifying
\begin{equation}\label{Liouville}
\left|\,\vrule height12pt width0pt
\smash{x - \frac{p_i}{q_i}}\,\right| < \frac{1}{q_i^n}\,.
\end{equation}
\begin{proposition}\label{manyfixtrans}
The map $R$ admits uncountably many fixed points that are Liouville numbers, hence transcendental real numbers.
\end{proposition}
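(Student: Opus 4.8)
The plan is to exploit the parametrization $\phi:\A\to\Fix\rho$ of Lemma \ref{phi} together with the explicit construction of the fixed point $\fixexp{a}$, and of the corresponding $x^a=\xi(\fixexp{a})\in\Fix R$, carried out just before that lemma, choosing the parameter $a=(a_1,a_2,\dots)\in\A$ to grow so fast that the binary expansion of $x^a$ acquires runs of $0$'s long enough to make it a Liouville number. Recall that $\fixexp{a}$ is the limit of the words $\fixexp{a}_n=b^{w^a_n}$, where $w^a_n$ is obtained from $\fixexp{a}_{n-1}\trunc{k_n}$ by inserting the block $0^{2a_n}$ immediately before the $n$-th occurrence of $1$, $k_n$ being the position of that occurrence. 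Two features are crucial: first, $k_n$ and, more generally, the word $\fixexp{a}_{n-1}\trunc{k_n-1}$ depend only on $a_1,\dots,a_{n-1}$; second, $w^a_n$ is a prefix of $\fixexp{a}$ and the $2a_n$ inserted $0$'s sit at positions $k_n,\dots,k_n+2a_n-1$ of $\fixexp{a}$, since $w^a_n$ contains exactly $n$ ones and hence every later insertion occurs beyond position $|w^a_n|=k_n+2a_n$; in particular the $k_n$ form a strictly increasing sequence diverging to $\infty$.

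First I would fix the recursive choice of $a$. Having chosen $a_1,\dots,a_{n-1}$, the integer $k_n$ is determined, and I set $a_n$ to be either $nk_n$ or $nk_n+1$. This leaves two independent choices at every stage, producing $2^{\aleph_0}$ pairwise distinct sequences $a$, hence, by injectivity of $\phi$ and of $\xi\restr{\C}$, uncountably many pairwise distinct fixed points $x^a\in\Fix R$.

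Next I would check that each such $x^a$ is a Liouville number. For $n\ge1$ put $q_n=2^{k_n-1}$ and let $P_n/q_n=0.(\fixexp{a}_{n-1}\trunc{k_n-1})$ be the truncation of the binary expansion of $x^a$ just before the inserted block. Since positions $k_n,\dots,k_n+2a_n-1$ of $\fixexp{a}$ are all $0$, one gets $0<x^a-P_n/q_n\le 2^{-(k_n-1)}2^{-2a_n}=q_n^{-1}2^{-2a_n}$, the strict lower bound because $P_n/q_n$ is dyadic while $x^a\notin\D$ by Proposition \ref{fixcantor}. The choice $a_n\ge nk_n$ forces $2a_n\ge (k_n-1)(n-1)$, hence $0<|x^a-P_n/q_n|<q_n^{-n}$, and as $q_n\to\infty$ these inequalities yield, for each fixed exponent, infinitely many distinct approximating rationals. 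Since moreover no fixed point of $R$ is rational (Propositions \ref{rationals} and \ref{fixcantor}), this exhibits $x^a$ as a Liouville number, and therefore as a transcendental number by Liouville's classical theorem.

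I do not expect a real obstacle; the only care needed is bookkeeping. One must verify that the first $k_n-1$ binary digits of $\fixexp{a}$ are frozen once $a_1,\dots,a_{n-1}$ have been selected --- which is exactly the content of the two ``crucial features'' above and reflects the fact that the erasing action of $\rho$ never returns to these prefixes --- and confirm the elementary estimate $2a_n\ge (k_n-1)(n-1)$ for the chosen growth rate. In short, forcing $a$ to grow quickly is precisely the device that manufactures the arbitrarily long blocks of $0$'s demanded by the definition of a Liouville number, while the strong erasing property of $\rho$ guarantees these blocks persist undisturbed in the final fixed point.
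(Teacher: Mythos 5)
Your argument is correct and follows exactly the route the paper takes: the published proof merely observes that $0.\fixexp{a}$ is a Liouville number ``as soon as the sequence $a$ increases fast enough,'' and your write-up supplies the quantitative bookkeeping (the frozen prefixes $\fixexp{a}_{n-1}\trunc{k_n-1}$, the inserted runs $0^{2a_n}$, the estimate $|x^a-P_n/q_n|<q_n^{-n}$ for $a_n\ge nk_n$, and the binary branching giving uncountability) that the paper leaves implicit. No discrepancy to report.
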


\begin{proof}
For the existence of uncountably many Liouville fixed points, it suffices to observe that, by construction, the fixed point $0.\fixexp{a}$ is a Liouville number as soon as the sequence $a \in A$ increases fast enough (see definition \eqref{fixexp} and Proposition \ref{fixcantor}).
By a classical result \cite{Liouville}, all the fixed points which are Liouville numbers are transcendental. 
\end{proof}

A similar result could be established analogously for periodic points of any given period.
However, this simple argument does not apply to the largest fixed point, for which we have to provide a different proof of transcendence.

\pagebreak 

\begin{proposition}\label{x0trans}
The largest fixed point $x^0$ of the map $R$ is transcendental. 
\end{proposition}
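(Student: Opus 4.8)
The plan is to use the Adamczewski--Bugeaud combinatorial transcendence criterion: if the binary expansion of an irrational number is \emph{stammering}, i.e.\ begins with infinitely many prefixes of the form $UV^{\omega'}$ where $\omega' > 1$ is fixed and $|UV^{\omega'}|/|UV|$ is bounded, then the number is transcendental. So the whole proof reduces to exhibiting, inside the binary word
\[
b^0 = 00101\prod_{h=0}^{\infty}\bigl(1^{3\cdot 2^h}(01)^{3\cdot 2^h}\bigr),
\]
arbitrarily long prefixes that are ``almost squares''. First I would fix notation: write $P_H$ for the prefix of $b^0$ ending at the completion of the block indexed by $H$, so $P_H = 00101\prod_{h=0}^{H}(1^{3\cdot2^h}(01)^{3\cdot2^h})$, and note $|P_H| = 5 + \sum_{h=0}^{H}(3\cdot2^h + 6\cdot2^h) = 5 + 9(2^{H+1}-1)$, which grows like $9\cdot 2^{H+1}$.

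The key observation is that the tail block $1^{3\cdot2^H}(01)^{3\cdot2^H}$ is itself highly repetitive: $1^{3\cdot 2^H}$ is a $(3\cdot 2^H)$-th power of the letter $1$, and $(01)^{3\cdot 2^H}$ is a $(3\cdot 2^H)$-th power of the word $01$. So I would look at a prefix that stops in the \emph{middle} of such a run. Concretely, consider the prefix $Q_H$ consisting of $P_{H-1}$ followed by the entire run $1^{3\cdot 2^H}$ and then half of the run $(01)^{3\cdot 2^H}$, i.e.\ followed by $(01)^{3\cdot 2^{H-1}} \cdot (01)^{3\cdot 2^{H-1}}$ minus nothing — more usefully, stop after $P_{H-1}1^{3\cdot 2^H}$ and then append $(01)^k(01)^k$ for the largest $k$ with $2k \le 3\cdot 2^H$, namely $k = 3\cdot 2^{H-1}$, which uses up the run exactly. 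This shows $P_{H-1}1^{3\cdot 2^H}(01)^{3\cdot 2^{H-1}}$ is a prefix $U$ such that $U\cdot (01)^{3\cdot 2^{H-1}}$ is also a prefix; equivalently the prefix $P_{H}$ ends with the square $\bigl((01)^{3\cdot2^{H-1}}\bigr)^2$. Taking $V = (01)^{3\cdot 2^{H-1}}$ and $U = P_{H-1}1^{3\cdot 2^H}$, we have $|U| = 5 + 9(2^H-1) + 3\cdot 2^H = 12\cdot 2^H - 4$ and $|V| = 6\cdot 2^{H-1} = 3\cdot 2^H$, so
\[
\frac{|UV^2|}{|UV|} = 1 + \frac{|V|}{|U|+|V|} = 1 + \frac{3\cdot 2^H}{15\cdot 2^H - 4}\, ,
\]
which is bounded (it tends to $1 + 1/5$) and stays $\ge 1 + \tfrac{1}{6}$ for all $H\ge 1$. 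An even cleaner choice exploits the run $1^{3\cdot 2^H}$ as a $(3\cdot 2^H)$-th power of the single letter $1$, giving a stammering exponent as large as one likes, but the square coming from $(01)^{3\cdot 2^{H-1}}$ already suffices. Thus $b^0$ is stammering, and since $x^0\notin\mathbb Q$ (indeed $\mathrm{Fix}\,R\subset[0,1]\setminus\D$ by Proposition \ref{fixcantor}, and $x^0$ is clearly not rational as its expansion is not eventually periodic), the criterion applies.

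The main obstacle, and the only place requiring care, is the bookkeeping: one must check that the candidate prefixes $U$ and $UV^{1+\epsilon}$ really are prefixes of $b^0$ — that is, that the word does not ``change its mind'' in between — and that the ratio $|UV^{1+\epsilon}|/|UV|$ is bounded \emph{below} away from $1$ and \emph{above}, uniformly in $H$. Both follow from the explicit product formula for $b^0$ by the elementary length computations above; there are no hidden cancellations because $\rho$ never acts here — we are reading off a fixed, explicitly given word. I would also remark that the same argument, applied to the words $x^\ell$ of \eqref{xell}, shows every ``simplest'' periodic point of odd period is transcendental, since those words likewise contain runs $1^{2^{m}}$ and $(01)^{2^{m}}$ of unbounded length occurring at bounded relative depth; combined with Proposition \ref{manyfixtrans} this gives the transcendence statements announced in item 5 of the introduction.
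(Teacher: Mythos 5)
Your proof is correct, but it takes a genuinely different route from the paper. The paper bounds the subword complexity: it shows $p(n,x^0)<8n+3$ by splitting $b^0$ into a short prefix and a tail in which every length-$n$ factor is a run of $1$'s glued to a run of $01$'s, and then invokes the linear-complexity transcendence criterion (Theorem 3.1 of \cite{Bugeaud2}, i.e.\ that an algebraic irrational must have $p(n,\cdot)/n\to\infty$). You instead exhibit explicit stammering prefixes: $P_H=UV^2$ with $U=P_{H-1}1^{3\cdot2^H}$, $V=(01)^{3\cdot2^{H-1}}$, so $|U|/|V|\to 4$ and $|V|\to\infty$, and you apply the Adamczewski--Bugeaud repetition criterion directly. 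Your length bookkeeping checks out ($|U|=12\cdot2^H-4$, $|V|=3\cdot2^H$), and irrationality is clear since the runs $1^{3\cdot 2^h}$ have unbounded length. The two criteria live in the same circle of ideas (the complexity criterion is in fact deduced from the stammering one, both resting on the Subspace Theorem), so your argument is the more ``primitive'' of the two; the paper's complexity count is a slicker packaged application and transfers verbatim to the points $x^\ell$ of \eqref{xell}, which is how the following proposition is proved --- though, as you note, your square-spotting argument transfers there too, since the runs $1^{2^m}$ and $(01)^{2^m}$ again occur at bounded relative depth. One small caveat: the hypothesis of the criterion is not that $|UV^{w}|/|UV|$ is bounded (that is automatic) but that $|U_n|/|V_n|$ is bounded, equivalently that $|U_nV_n^{w}|/|U_nV_n|$ stays bounded \emph{away from} $1$; your computation establishes exactly this, so the slip is only in the statement, not in the proof.
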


\begin{proof}
Let $p(n,x^0)$ be the cardinality of the set of words of length $n$ appearing in the binary expansion of $x^0$. Then we have
\begin{equation}\label{complexity}
\limsup_{n\to\infty} \frac{p(n,x^0)}{n} < 8\,.
\end{equation}
Indeed, given any $n > 3$, let $k$ be the least positive integer such that $k \geq \log_2 (n/3)$, hence $3 \cdot 2^k \geq n$ and $3 \cdot 2^{k-1} < n$. Then, we split the binary expansion $b^0$ of $x^0$ as the product $b^0 = b_0'b_0''$, where $b_0' = 00101 \prod_{h=0}^{k-1} 1^{3\cdot 2^h} (01)^{3\cdot 2^h}$ and $b_0'' = \prod_{h=k}^\infty 1^{3\cdot 2^h} (01)^{3\cdot 2^h}$. The length of $b_0'$ is
\begin{equation}
5 + 9 \textstyle\sum_{h=0}^{k-1} 2^h = 5 + 9(2^k-1) = 9 \cdot 2^k - 4 < 6 n - 4 < 6n\,.
\end{equation}
Therefore, there are less than $6n$ different subwords of $b^0$ having length $n$ and starting in $b_0'$. We are left to consider the different subwords of length $n$ that are contained in $b_0''$.
Due to the inequality $3 \cdot 2^k \geq n$, each of them consists of a (possibly empty) sequence of 1's followed/preceded by a (possibly empty or truncated on the right/left) sequence of 01's. So, there are at most $2n+3$ such subwords.
Therefore, $p(n,x^0) < 8n + 3$ for every $n \geq 1$, which gives (\ref{complexity}). Thus, by Theorem 3.1 in \cite{Bugeaud2}, $x^0$ is transcendental.
\end{proof}

Along the same lines we can prove the following.

\begin{proposition}
The ``simplest'' periodic point $x^\ell$ of period $2\ell+1$ defined in \eqref{xell} is transcendental for each $\ell \geq 1$.
\end{proposition}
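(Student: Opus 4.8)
The plan is to mimic the proof of Proposition \ref{x0trans}: we show that the binary expansion $b^\ell$ of $x^\ell$ has linear subword complexity, i.e.\ $\limsup_{n\to\infty} p(n,x^\ell)/n < \infty$, where $p(n,x^\ell)$ is the number of distinct binary words of length $n$ occurring in $b^\ell$, and then invoke the combinatorial transcendence criterion \cite[Theorem 3.1]{Bugeaud2}. As a preliminary we note that $x^\ell$ is irrational: the run lengths occurring in \eqref{xell} are the powers $2^{(2\ell+1)h+c}$, which strictly increase with $h$, so $b^\ell$ is not eventually periodic. (Equivalently, $x^\ell$ is an $R$-periodic point of odd minimal period $2\ell+1\geq 3$, hence it cannot be rational, since by Proposition \ref{rationals} the only rational periodic orbits of $R$ are the $2$-cycles $C_0$ and $C_1$.)

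The structural observation is that, by \eqref{xell}--\eqref{psw}, the word $b^\ell$ is the concatenation of the bounded prefix $w=(01)^{2^{\ell-1}}$ followed by the blocks $u_0,v_0,u_1,v_1,\dots$, and each $u_h$ and each $v_h$ is itself a concatenation of three \emph{runs}, each of the form $1^{k}$ or $(01)^{k}$ with $k$ a power of $2$ depending on $h$ and $\ell$; moreover, since $u_h=1^{\ast}(01)^{\ast}1^{\ast}$ and $v_h=(01)^{\ast}1^{\ast}(01)^{\ast}$ and the boundary types between consecutive blocks match, the whole word $b^\ell$ decomposes into runs of strictly alternating type (a $1$-run is always followed by a $(01)$-run and vice versa). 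Now fix $n\geq 1$ and let $h_0$ be the least integer such that every run occurring in some $u_h$ or $v_h$ with $h\geq h_0$ has length $\geq n$; since the shortest run at level $h$ has length $2^{(2\ell+1)h+\ell}$, one checks that $2^{(2\ell+1)h_0}=O(n)$. Split $b^\ell=b'b''$, where $b'$ is the finite prefix consisting of $w$ together with all the blocks $u_h,v_h$ with $h<h_0$, and $b''$ is the remaining tail. Summing the geometrically growing run lengths gives $|b'|=O(n)$, with the implied constant depending only on $\ell$.

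We then bound $p(n,x^\ell)$. A length-$n$ factor of $b^\ell$ either starts at one of the first $|b'|$ positions, contributing at most $|b'|=O(n)$ factors, or is entirely contained in $b''$. In the latter case, since $b''$ is a concatenation of runs of length $\geq n$ and of alternating type, such a factor meets at most two consecutive runs; it is therefore determined by the (at most two) run types involved, by how it is split between them (at most $n+1$ possibilities), and by a bounded offset recording the phase of the $(01)$-run, so there are $O(n)$ of them with an absolute implied constant. Hence $p(n,x^\ell)=O(n)$, in particular $\limsup_{n\to\infty}p(n,x^\ell)/n<\infty$, and since $x^\ell$ is irrational, \cite[Theorem 3.1]{Bugeaud2} shows that $x^\ell$ is transcendental.

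The only delicate point is the bookkeeping in the two counting steps: reading off from \eqref{psw} that $u_h$ and $v_h$ break into runs of alternating type so that in $b''$ a window of length $n$ spans at most two of them, and then checking that a window straddling a $1$-run/$(01)$-run boundary is pinned down by finitely many discrete parameters (the two run types, the split position, and the parity of the $(01)$-prefix or $(01)$-suffix). This is entirely parallel to the corresponding estimate for $b^0$ carried out in the proof of Proposition \ref{x0trans}, the only difference being that one now has six runs per level instead of two, which affects the implied constant but not the linear order of growth.
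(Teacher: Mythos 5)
Your proof is correct and follows essentially the same route as the paper's: split $b^\ell$ into a prefix of length $O_\ell(n)$ and a tail in which every run has length at least $n$, count the $O(n)$ factors starting in the prefix and the $O(n)$ factors straddling at most two alternating runs in the tail, and conclude linear complexity plus Bugeaud's criterion. The only (welcome) addition is that you make explicit the irrationality of $x^\ell$, which the paper's proof leaves implicit but which is indeed needed to apply the transcendence criterion.
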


\begin{proof}
We know that the general form for the binary expansion of $x^\ell$ is $b^\ell = w\prod_{h=0}^{\infty}u_h v_h$
where $w,u_h$ and $v_h$ are the words in \eqref{psw}. Proceeding as above, let $k$ be the least positive integer such that $2^{(2\ell +1)k+\ell} \geq n$ and $2^{(2\ell +1)(k-1)+\ell} < n$. 
We then split the binary expansion of $b^\ell$ as $b_{\ell}'b_{\ell}''$, where $b_\ell'=w\prod_{h=0}^{k-1}u_h v_h$ and $b_\ell''=\prod_{h=k}^{\infty}u_h v_h$.
Now, one shows that the length of $b_\ell'$ is bounded above by $12\cdot 2^{2\ell +1} n$, which also gives a bound for the number of different subwords having length $n$ and starting in $b_\ell'$. Moreover, by the same argument used above yields that $b_\ell''$ contributes with at most $2n + 3$ to $p(n,b^\ell)$.
\end{proof}

\section{Some open questions}
\label{problems}

Many natural questions concerning the map $R$ do not have yet an answer. We limit ourselves to list a few of them.

\medskip

Concerning basic properties of the map, we propose the following.

\begin{question}
What is the Hausdorff dimension of the graph $G$ of $R$? 
\end{question}

We note that the self-affine structure of $G$ discussed in Section \ref{graph} seems not to fit into any of the known criteria for the computation of the Hausdorff dimension.

\medskip

From the ergodic point of view, the main question is probably the next one.

\begin{question}
Does $R$ admit an invariant measure which is not purely atomic?
\end{question}

We recall that the singularity of $R$ prevents the existence of absolutely continuous (with respect to Lebesgue) invariant measures.

\medskip

Although the main interest in studying the dynamics of a map such as R comes from a natural generalization of topological dynamics questions to a wider class of functions, like Baire class 1 functions, it could be of interest also in an ergodic context to model specific ``non-equilibrium'' situations in which events that are impossible at one time become possible at another time.

\pagebreak 

\medskip

Concerning points with a dense orbit, heuristics indicate that there is plenty of them, as we have uncountability for two different reasons: in the procedure described in the proof of Lemma \ref{lemmacylbis} there are countably many steps each involving countably many choices; in Proposition \ref{dense_orbit} there are uncountably many possible choices for the sequence $(w_n)_{n \geq 1}$. 

Therefore, it seems reasonable to ask the following.

\begin{question}
Is the set of points with a dense $R$-orbit of second category $($or even co-meagre$)$? Does it have positive $($or even full\/$)$ Lebesgue measure?
\end{question}

About the arithmetic properties of $R$, in the light of the results of Section \ref{arithmetic}, and of the fact that the Thue-Morse number is known to be transcendental, one could ask the question below.

\begin{question}
Is every fixed point of $R$ transcendental? Is every periodic point of $R$ not in $C_0$ or $C_1$ transcendental?
\end{question}

Finally, as said in the Introduction, the map $R$ is the model-case of a more general class of interval maps generated by erasing block substitutions, meaning (in the binary case) substitution rules $s:\{0,1\}^k \to \{0,1\}^*$ such that at least one word $w$ of length $k$ is mapped to $\epsilon$. Looking at these more general maps poses of course new problems.

\begin{question}
What of the properties we established for the map $R$ are true for interval maps generated by which classes of $($binary\/$)$ erasing block substitutions? What new phenomena arise in this extended context?
\end{question}

Let us list some speculation about this last question.

\vskip-7.5pt\vskip0pt\leftmargini25pt
\begin{enumerate}\topsep0pt\itemsep0pt

\item While the map $\rho$ has an ``almost-inverse" map $\sigma$ (up to insertions of the word 00), this is not true for a general erasing substitution rule $s$ as above. Indeed, in the $s$-preimages of a given infinite word $v \in \{0,1\}^\omega$, there could be in principle infinitely many words $\prod_{i=1}^\infty w_i$ with $w_i \in \{0,1\}^k$, such that $s(w_i) \neq \epsilon$ for every $i \geq 1$. This could make the topological structure of the fibers much more complex than in our present case.

\item Even assuming that there is only one word $w_\epsilon \in \{0,1\}^k$ mapped by $s$ to the empty word, if $w_\epsilon \neq 0^k$ we have to consider separately three objects, that is the set $\D$ of dyadic rationals in $[0,1]$ and the two sets
\begin{eqnarray*}
& S_1=\{x\in[0,1] \st x=0.v,\ v=s(w0^\infty),\ w\in\{0,1\}^*\}\,,\\[4pt]
& S_2=\{x\in[0,1] \st x=0.ww_\epsilon^\infty,\ w\in\{0,1\}^{nk}\}\,.
\end{eqnarray*}
It is not difficult to see that these sets coincide in the case when $w_\epsilon = 0^k$, but this is not true in general. Moreover, if $s^{-1}(\epsilon) \subset \{0,1\}^k$ has more than one element, the interval map $f_s:[0,1]\to[0,1]$ generated by the symbolic action of $s$ is in general discontinuous also at some (actually uncountably many) irrationals. 

\item For suitable erasing substitution rules $s$, the map $f_s$ can have stronger properties than $R$. For instance, $s$ can be chosen so that $f_s$ is a Darboux function.

\end{enumerate}
\vskip-6pt

A first study of interval maps generated by general erasing block substitutions is done in \cite{Interval}, but several problems are still open, in particular concerning point 1 above.


\section*{Acknowledgements}

This work is partially supported by the research project PRIN 2017S35EHN$\textunderscore$004 ``Regular and stochastic behaviour in dynamical systems'' of the Italian Ministry of Education and Research.


\section*{List of symbols}

\setlength{\tabcolsep}{0pt}

\begin{tabular}{p{0.175\textwidth}p{0.825\textwidth}}

$\mathbb N$ & natural numbers (including 0) \\
$\mathbb D \subset \mathbb Q$ & dyadic rationals and all rationals, respectively\\[3pt]
$\varphi$ & the golden ratio $(1+\sqrt5)/2$\\[3pt]

$\Q = \mathbb Q \cap [0,1]$ & rationals in $[0,1]$\\
$\D = \mathbb D \cap [0,1]$ & dyadic rationals in $[0,1]$\\
$\N \subset [0,1]$ & normal binary numbers in $[0,1]$\\[3pt]

$\A = \mathbb N^\omega$ & infinite sequences of natural numbers\\
$\B = \{0,1\}^\omega$ & infinite binary sequences/words\\
$\C \subset \B$ & infinite binary sequences/words which are not eventually $0$\\
$\C'\subset \B$ & infinite binary sequences/words which are not eventually $1$\\
$\S \subset \C$ & infinite binary sequences/words with no pair of consecutive 0's\\[3pt]

$\eta:\A \to \C$ & map $(a_1,a_2,\dots) \mapsto 0^{a_1}10^{a_2}1\dots$ 
\hfill \eqref{etadef}\\
$\xi:\B \to [0,1]$ & map $(b_1,b_2,\dots) \mapsto x = 0.b_1b_2\dots$ 
\hfill \eqref{xidef}\\
$\beta: (0,1] \to \C$ & 
map $x \mapsto (b_1,b_2,\dots)$, the unique sequence in $\C$ such that $x = 0.b_1b_2\dots$
\hfill \eqref{zetadef}\\
$\beta': [0,1) \to \C'$ & 
map $x \mapsto (b_1,b_2,\dots)$, the unique sequence in $\C'$ such that $x = 0.b_1b_2\dots$
\hfill \eqref{zetadef}\\[3pt]

$|s|\kern2pt,|w|$ & length of a sequence $s$ or word $w$, respectively\\
$|s|_1\kern2pt,|w|_1$ & number of 1's in a binary sequence or word, respectively\\
$\widetilde s\kern2pt,\widetilde w$ & binary sequence/word obtained by complementing $s/w$, respectively\\[3pt]

$s\trunc{n}\kern2pt,w\trunc{n}$ & $n$-truncation of a sequence $s$ or a word $w$, respectively
\hfill \eqref{trunc}\\
$\cyl{s}\kern2pt,\cyl{w}$ & set of all infinite sequences/words having prefix $s$/$w$, respectively
\hfill \eqref{cyl}\\[3pt]

$\{0,1\}^*$ & possibly empty finite binary sequences/words\\
$\{0,1\}^\infty$ & possibly empty finite or infinite binary sequences/words\\[3pt]

\end{tabular}

\begin{tabular}{p{0.25\textwidth}p{0.75\textwidth}}
$\rho:\{0,1\}^\infty \to \{0,1\}^\infty$ & the main substitution map 
\hfill \eqref{rho}\\
$\tau:\{0,1\}^\infty \to \{0,1\}^\infty$ & an equivalent 2-block substitution map\hfil\break
(defined on even/infinite length binary sequences/words)
\hfill \eqref{tau}\\
$\overline\tau:\{0,1\}^\infty \to \{0,1\}^\infty$ & the substitution map inverse to $\tau$
\hfill \eqref{tauinv}\\
$\sigma:\{0,1\}^\infty \to \{0,1\}^\infty$ & the ``simplest'' substitution map inverse to $\rho$
\hfill \eqref{sigma}\\[3pt]

$R:[0,1] \to [0,1]$ & the real map corresponding to $\rho$ \hfill \eqref{R}\\
$S:[0,1] \to [0,1]$ & the section of $R$ corresponding to $\sigma$ \hfill \eqref{S}\\[3pt]

\end{tabular}

\begin{tabular}{p{0.1\textwidth}p{0.9\textwidth}}
$\epsilon$ & the empty sequence/word\\
$n_\epsilon$ & vanishing order function of binary sequences/words under the action or $\rho$
\hfill \eqref{vanord}\\[3pt]

$\widetilde\rho$ & the substitution map complementary to $\rho$ \hfill \eqref{tilderho}\\
$\rho_k$ & $\rho$ or $\widetilde\rho$, depending on $k$ being an even or odd natural, respectively 
\hfill \eqref{rhok}\\
$\rho_v$ & auxiliary substitution map, coinciding with $\rho_{|v|}$, such that $\rho(vw) = \rho(v)\rho_v(w)$
\hfill \eqref{rhov}\\
$\rho_v^n$ & auxiliary substitution map such that $\rho^n(vw) = \rho^n(v)\rho^n_v(w)$ \hfill \eqref{rhonv}\\[3pt]

$\binexp{a}{b}$ & binary sequence/word obtained from $b$ by inserting $0^{2a_k}$ before each $k$-th 1
\hfill \eqref{binexp}\\
$\fibexp{b}$ & the auxiliary set $\{\binexp{a}{b}\,|\,a \in \A\}$ used to construct the fibers of $\rho$ and $R$
\hfill (\ref{fibereq}\kern2pt,\kern2pt\ref{fibexp})\\
$\fixexp{a}$ & the element of $\Fix\rho$ generated from the ``simples'' one $b^0$ according to $a\in\A$
\hfill \eqref{fixexp}\\[3pt]

$b^0$ & the ``simplest'' fixed point $\fixexp{\per{0}}$ of the map $\rho$
\hfill \eqref{b0}\\[3pt]

$x^0$ & the largest fixed point $\xi(b^0)$ of the map $R$
\hfill \eqref{x0}\\
$x^\ell$ & the ``simplest'' periodic point of the map $R$ having odd order $2\ell+1$
\hfill \eqref{xell}\\
$C_0\kern2pt,C_1$ & the two rational $R$-cycles $\{0,2/3\}$ and $\{1,1/3\}$ respectively\\
$Q_0\kern2pt,Q_1$ & the subsets of $\Q$ attracted by $C_0$ and $C_1$ respectively
\hfill \eqref{Qi}

\end{tabular} 

\vfill\break

\end{document}